

\documentclass{amsart}

\usepackage{amsmath}
\usepackage{amssymb}
\usepackage{amsthm}
\usepackage{esint}
\usepackage{enumitem}
\usepackage{url}


\theoremstyle{plain}
\newtheorem{theorem}{Theorem}[section]
\newtheorem{corollary}[theorem]{Corollary}
\newtheorem{lemma}[theorem]{Lemma}
\newtheorem{proposition}[theorem]{Proposition}

\theoremstyle{definition}
\newtheorem{definition}[theorem]{Definition}
\newtheorem{remark}[theorem]{Remark}

\theoremstyle{remark}

\numberwithin{theorem}{section}
\numberwithin{equation}{section}


\newcommand{\N}{\mathbb{N}}

\newcommand{\R}{\mathbb{R}}

\newcommand{\dist}{\mathrm{dist}}
\newcommand{\diam}{\mathrm{diam}}

\newcommand{\cl}{\overline}
\newcommand{\del}{\partial}
\newcommand{\loc}{\mathrm{loc}}


\newcommand{\laplacian}{\Delta}


\DeclareMathOperator*{\essinf}{ess\,inf}
\DeclareMathOperator*{\spt}{supp}
\newcommand{\capacity}{\mathrm{cap}}

%

\newcommand{\trinorm}[1]
{{
    \left\vert\kern-0.20ex\left\vert\kern-0.20ex\left\vert
    #1 
    \right\vert\kern-0.20ex\right\vert\kern-0.20ex\right\vert
}}

\newcommand{\M}{\mathcal{M}}
\newcommand{\e}{\mathcal{E}}
\newcommand{\W}{{\bf{W}}}
\newcommand{\w}{\mathcal{W}}
\newcommand{\s}{{S}}



\begin{document}





\title[Quasilinear elliptic equations with sub-natural growth terms]
	{Quasilinear elliptic equations with sub-natural growth terms in bounded domains}
\author{Takanobu Hara}
\address{%
	Department of Mathematics \\
	Hokkaido University \\
	Kita 8 Nishi 10  Sapporo \\
	Hokkaido 060-0810, Japan
}
\email{takanobu.hara.math@gmail.com}
\date{\today}




\begin{abstract}
We consider the existence of positive solutions to
weighted quasilinear elliptic differential equations of the type
\[
\begin{cases}
- \Delta_{p, w} u = \sigma u^{q}
&
\text{in $\Omega$},
\\
u = 0
&
\text{on $\partial \Omega$}
\end{cases}
\]
in the sub-natural growth case $0 < q < p - 1$, where $\Omega$ is a bounded domain in $\mathbb{R}^{n}$,
$\Delta_{p, w}$ is a weighted $p$-Laplacian, and $\sigma$ is a nonnegative (locally finite) Radon measure on $\Omega$.
We give criteria for the existence problem.
For the proof, we investigate various properties of $p$-superharmonic functions, especially the solvability of Dirichlet problems with infinite measure data.
\end{abstract}

\subjclass{35J92, 35J20, 42B37} 
\keywords{Quasilinear elliptic equation, $p$-Laplacian, Wolff potential, Measure data, Trace inequality}



\maketitle


\section{Introduction}\label{sec:introduction}

Let $\Omega$ be a bounded domain in $\R^{n}$ and let $1 < p < \infty$.
We consider the existence of positive solutions to quasilinear elliptic equations of the type
\begin{equation}\label{eqn:p-laplace}
\begin{cases}
\displaystyle
- \laplacian_{p, w} u = \sigma u^{q}
&
\text{in $\Omega$},
\\
u = 0
&
\text{on $\del \Omega$,}
\end{cases}
\end{equation}
in the sub-natural growth case $0 < q < p - 1$,
where
$\laplacian_{p, w}$ is a weighted $(p, w)$-Laplacian,
$w$ is a $p$-admissible weight on $\R^{n}$ (see Section \ref{sec:preliminaries} below)
and $\sigma$ is a nonnegative (locally finite) Radon measure on $\Omega$.

For the standard theory of sublinear equations,
we refer to \cite{MR0181881, MR679140, MR820658, MR1141779} and the references therein.
In the classical existence results of weak solutions, the boundedness of coefficients was assumed.
Boccardo and Orsina \cite{MR1272564} removed this assumption
and pointed out that if the integrability of the coefficients is low,
the solutions do not necessarily belong to the class of weak solutions.
Therefore, we interpret this equation in the sense of $p$-superharmonic functions, or locally renormalized solutions.
For details on such generalized solutions, especially on the relation between the two concepts, see \cite{MR1955596, MR2859927, MR3676369} and references therein.
Hereafter, we use the framework for $p$-superharmonic functions.

The measure-valued coefficient equation \eqref{eqn:p-laplace} is relevant to the following $L^{p}$-$L^{1 + q}$ trace inequality:
\begin{equation}\label{eqn:trace_ineq}
\| f \|_{L^{1 + q}(\Omega; \sigma)} \le C_{T} \| \nabla f \|_{L^{p}(\Omega; w)}, \quad \forall f \in C_{c}^{\infty}(\Omega).
\end{equation}
Maz'ya and Netrusov \cite{MR1313906} gave a capacitary condition that characterizes \eqref{eqn:trace_ineq}.
Cascante, Ortega and Verbitsky \cite{MR1734322, MR1747901} studied non-capacitary characterizations for inequalities of the type \eqref{eqn:trace_ineq}.
For example, if $\Omega = \R^{n}$ and $w = 1$, then the best constant $C_{T}$ in \eqref{eqn:trace_ineq} satisfies
\[
\frac{1}{c} \, C_{T}^{ \frac{(1 + q)p}{p - 1 - q} }
\le
\int_{\R^{n}}
\left( \W_{1, p} \sigma \right)^{\frac{(1 + q)(p - 1)}{p - 1 - q}}
\, d \sigma
\le
c \, C_{T}^{ \frac{(1 + q)p}{p - 1 - q} },
\]
where $c = c(n, p, q)$ and $\W_{1, p} \sigma$ is the \textit{Wolff potential} of $\sigma$ (see \cite{MR0409858, MR727526}).

Recently, Verbitsky and his colleagues studied the problem of
existence of solutions to elliptic equations related to \eqref{eqn:p-laplace}
and presented some criteria
(see  \cite{MR3567503, MR3311903, MR3556326, MR3938014, MR4105916, MR3642745, MR3724493, MR3985926, MR3881877, MR3792109, VERBITSKY2019111516}).
In their study, they treated the cases of $\Omega = \R^{n}$, or $p = 2$.

For $\Omega = \R^{n}$, Wolff potentials are suitable potentials for the problem.
Every $p$-superharmonic function known to be locally estimated by Wolff potentials (see \cite{MR1205885,MR1264000}),
and these estimates are some of the key pieces of proof.
In contrast, we can directly use Green potentials for $p = 2$ or, more generally, for linear equations.
However, Wolff potentials are not sufficient for estimating the boundary behavior of solutions.
Furthermore, Radon measures satisfying \eqref{eqn:trace_ineq} may not have compact support in $\Omega$ and are not even finite in general (see Section \ref{sec:example2}).
Consequently, the complete criteria for the existence of solutions to quasilinear equations in bounded domains have not yet been obtained.

The purpose of this paper is to extend Verbitsky's theory to (weighted) quasilinear equations in bounded domains.
Our basic idea is to replace Wolff or Green potentials with
the minimal positive $p$-superharmonic solution $u$ to $- \laplacian_{p, w} u = \sigma$ (see Definition \ref{def:potential} for the precise meaning).
The function $\w_{p, w} \sigma = u$ has no explicit integral representation; however, some required estimates can be obtained by directly using the properties of weak solutions.
To realize this approach, we investigate various properties of $p$-superharmonic functions, especially the solvability of Dirichlet problems in the case of infinite measure data.
Note that this existence problem has been stated as an open problem in \cite[Problem 2]{MR1990293}.

Let $\M^{+}_{0}(\Omega)$ be the set of all nonnegative Radon measures on $\Omega$
that are absolutely continuous with respect to the $(p, w)$-capacity.
Note that $\sigma$ must belong to $\M^{+}_{0}(\Omega)$ if \eqref{eqn:trace_ineq} holds.
Our main result is as follows.

\begin{theorem}\label{thm:main_theorem}
Let $\Omega$ be a bounded domain in $\R^{n}$.
Let $1 < p < \infty$ and $0 < q < p - 1$.
Suppose that $\sigma \in \M^{+}_{0}(\Omega) \setminus \{ 0 \}$.
Fix $0 < \gamma < \infty$.
Then the following statements are equivalent:
\begin{enumerate}[label=(\arabic*)]
\item\label{enum:01@main_theorem}
There exists a nontrivial nonnegative $(p, w)$-superharmonic supersolution $v$ to $- \laplacian_{p, w} v = \sigma v^{q}$ in $\Omega$
satisfying $\| v \|_{L^{\gamma + q}(\sigma)} \le C_{1} < \infty$.
\item\label{enum:02@main_theorem}
The measure $\sigma$ satisfies
\begin{equation}\label{eqn:energy_cond}
\left(
\int_{\Omega} \left( \w_{p, w} \sigma \right)^{ \frac{(\gamma + q)(p - 1)}{p - 1 - q} } \, d \sigma
\right)^{\frac{1}{\gamma + q}}
\le C_{2} < \infty.
\end{equation}
\item\label{enum:03@main_theorem}
The following weighted norm inequality holds:
\begin{equation}\label{eqn:weighted_norm_ineq}
\| \w_{p, w}( |f| \sigma ) \|_{L^{\gamma + q}( \sigma )}
\le
C_{3} \| f \|_{L^{ \frac{\gamma + q}{q} }( \sigma )}^{\frac{1}{p - 1}},
\quad
\forall f \in L^{ \frac{\gamma + q}{q} }( \sigma ).
\end{equation}
\end{enumerate}
Moreover, if $C_{i}$ ($i = 1, 2 ,3$) are the best constants in the above statements, then
\[
C_{1}
\le
C_{3}^{\frac{p - 1}{p - 1 - q}}
\le
c_{E}^{\frac{1}{p - 1 - q}} \, C_{2}
\le
\frac{ c_{E}^{\frac{1}{p - 1 - q}} }{ c_{V} } \, C_{1},
\]
where
\begin{equation}\label{eqn:opt-const}
c_{E}
:=
\left( \frac{p - 1 + \gamma}{p} \right)^{p} \frac{1}{\gamma},
\quad
c_{V}
:=
\left( \frac{p - 1 - q}{p - 1} \right)^{ \frac{p - 1}{p - 1 - q} }.
\end{equation}
In addition, if one of the above statements holds,
then there exists a minimal positive $(p, w)$-superharmonic solution $u$ to $- \laplacian_{p, w} u = \sigma u^{q}$ in $\Omega$
such that $\| u \|_{L^{\gamma + q}(\sigma)} \le C_{1}$.
\end{theorem}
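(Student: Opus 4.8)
The plan is to run the cyclic chain \ref{enum:01@main_theorem} $\Rightarrow$ \ref{enum:02@main_theorem} $\Rightarrow$ \ref{enum:03@main_theorem} $\Rightarrow$ \ref{enum:01@main_theorem} while tracking the constants, producing the minimal solution inside the last step. Write $K:=\w_{p,w}\sigma$ and $s:=\tfrac{p-1}{p-1-q}$, so that $-\laplacian_{p,w}K=\sigma$, $(s-1)(p-1)=sq$, and the exponents in \eqref{eqn:energy_cond}, \eqref{eqn:weighted_norm_ineq} are $s(\gamma+q)$ and $\tfrac{\gamma+q}{q}$. The structural observation driving the whole argument is that $c_{V}K^{s}$ is a subsolution of $-\laplacian_{p,w}u=\sigma u^{q}$: a chain-rule computation gives
\[
-\laplacian_{p,w}(K^{s}) = s^{p-1}\bigl(K^{sq}\sigma - (s-1)(p-1)\,K^{sq-1}\,w\,|\nabla K|^{p}\bigr) \le s^{p-1}K^{sq}\sigma ,
\]
and then $-\laplacian_{p,w}(c_{V}K^{s}) \le c_{V}^{p-1}s^{p-1}K^{sq}\sigma = c_{V}^{q}K^{sq}\sigma = \sigma(c_{V}K^{s})^{q}$ precisely because $c_{V}$ of \eqref{eqn:opt-const} satisfies $c_{V}^{p-1-q}=s^{-(p-1)}$. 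For \ref{enum:01@main_theorem} $\Rightarrow$ \ref{enum:02@main_theorem}: if $v$ is a nontrivial nonnegative $(p,w)$-superharmonic supersolution of $-\laplacian_{p,w}v=\sigma v^{q}$, then $-\laplacian_{p,w}v\ge\sigma v^{q}$, so $v\ge\w_{p,w}(\sigma v^{q})$ by the minimality defining $\w_{p,w}$; comparing the subsolution $c_{V}K^{s}$, which has vanishing boundary values, with $v$ --- the comparison principle being available because $t\mapsto\sigma t^{q-(p-1)}$ is nonincreasing, i.e.\ the equation is sublinear --- yields $v\ge c_{V}K^{s}$. Raising to the power $\gamma+q$ and integrating against $\sigma$ gives \eqref{eqn:energy_cond} with $C_{2}\le C_{1}/c_{V}$.

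For \ref{enum:02@main_theorem} $\Rightarrow$ \ref{enum:03@main_theorem}: given $0\le f$ with $\|f\|_{L^{(\gamma+q)/q}(\sigma)}=1$, put $u:=\w_{p,w}(f\sigma)$ and test $-\laplacian_{p,w}u=f\sigma$ against a truncation of $u^{\gamma}$; using the identity $w\,u^{\gamma-1}|\nabla u|^{p}=(p/(p-1+\gamma))^{p}\,w\,|\nabla(u^{(p-1+\gamma)/p})|^{p}$ this produces the energy identity
\[
\bigl\|\nabla\bigl(u^{(p-1+\gamma)/p}\bigr)\bigr\|_{L^{p}(\Omega;w)}^{p}=c_{E}\int_{\Omega}u^{\gamma}f\,d\sigma ,
\]
with $c_{E}$ as in \eqref{eqn:opt-const}. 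I would then combine this with H\"older's inequality (exponents $\tfrac{\gamma+q}{\gamma}$ and $\tfrac{\gamma+q}{q}$, the latter absorbing $f$) and with the subcritical $L^{p}$--$L^{\rho}$ trace inequality on $\Omega$ for $\rho:=\tfrac{p(\gamma+q)}{p-1+\gamma}<p$, whose sharp potential-theoretic characterization --- the analogue for $\w_{p,w}$ on a bounded domain of Maz'ya--Netrusov \cite{MR1313906} and Cascante--Ortega--Verbitsky \cite{MR1734322,MR1747901} --- is exactly \eqref{eqn:energy_cond}, with trace constant controlled by $C_{2}^{(p-1-q)/p}$; unwinding the exponents then gives \eqref{eqn:weighted_norm_ineq} with $C_{3}^{(p-1)/(p-1-q)}\le c_{E}^{1/(p-1-q)}C_{2}$.

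For \ref{enum:03@main_theorem} $\Rightarrow$ \ref{enum:01@main_theorem} and the minimal solution: \eqref{eqn:weighted_norm_ineq} implies \eqref{eqn:energy_cond}, so $K\in L^{s(\gamma+q)}(\sigma)$, whence $K<\infty$ $\sigma$-a.e.\ and $u_{0}:=c_{V}K^{s}$ is an admissible subsolution with finite $L^{\gamma+q}(\sigma)$ norm. Run the iteration $u_{j+1}:=\w_{p,w}(\sigma u_{j}^{q})$: since $u_{0}$ is a subsolution, $u_{0}\le u_{1}$, and monotonicity of $\w_{p,w}$ makes $(u_{j})$ nondecreasing; applying \eqref{eqn:weighted_norm_ineq} with $f=u_{j}^{q}$ gives $A_{j+1}\le C_{3}A_{j}^{q/(p-1)}$ for $A_{j}:=\|u_{j}\|_{L^{\gamma+q}(\sigma)}$, and since $(A_{j})$ is nondecreasing this forces $A_{j}\le C_{3}^{(p-1)/(p-1-q)}$ for all $j$. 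Thus $u:=\lim_{j}u_{j}<\infty$ $\sigma$-a.e., and stability of $(p,w)$-superharmonic solutions under monotone increasing data identifies $u$ as a $(p,w)$-superharmonic solution of $-\laplacian_{p,w}u=\sigma u^{q}$ with $\|u\|_{L^{\gamma+q}(\sigma)}=\lim_{j}A_{j}\le C_{3}^{(p-1)/(p-1-q)}$, proving \ref{enum:01@main_theorem}. If $v$ is any admissible supersolution, then $u_{0}=c_{V}K^{s}\le v$ by the first step and, inductively, $u_{j+1}=\w_{p,w}(\sigma u_{j}^{q})\le\w_{p,w}(\sigma v^{q})\le v$, so $u\le v$; hence $\|u\|_{L^{\gamma+q}(\sigma)}\le C_{1}$, while $u$ itself being admissible forces $\|u\|_{L^{\gamma+q}(\sigma)}=C_{1}$ and closes the constant chain with $C_{1}\le C_{3}^{(p-1)/(p-1-q)}$.

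The main obstacle, I expect, is not any of the estimates above but the fact that the data $\sigma v^{q}$ and $\sigma u_{j}^{q}$ need be neither finite nor compactly supported in $\Omega$: one must first \emph{solve} the Dirichlet problem $-\laplacian_{p,w}U=\mu$ for such infinite measures, then show that the operator $\w_{p,w}$ so obtained satisfies the minimality/comparison principle, the $(p-1)$-homogeneity and the monotone-convergence property used repeatedly above, and make rigorous the boundary behaviour underlying the assertion that $c_{V}K^{s}$ has vanishing boundary values and the comparison with a merely $(p,w)$-superharmonic supersolution. This is precisely the circle of questions recorded as \cite[Problem 2]{MR1990293}, and it is where essentially all the technical work will lie.
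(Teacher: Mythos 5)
Your global architecture is the same as the paper's: the cycle \ref{enum:01@main_theorem} $\Rightarrow$ \ref{enum:02@main_theorem} $\Rightarrow$ \ref{enum:03@main_theorem} $\Rightarrow$ \ref{enum:01@main_theorem} with the same constants, the energy identity plus a Picone/trace-type estimate for the middle implication, and the monotone iteration $u_{j+1}=\w_{p,w}(\sigma u_j^q)$ started at $c_V(\w_{p,w}\sigma)^{(p-1)/(p-1-q)}$ for the last one. The genuine gap is in how you justify the lower bound $v\ge c_V K^{s}$ (and the base step $u_0\le u_1$), which is the only place $c_V$ enters. You propose to verify by the chain rule that $c_VK^{s}$ is a subsolution of $-\laplacian_{p,w}u=\sigma u^q$ and then invoke a comparison principle for the sublinear equation. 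Three things break: (a) the chain-rule identity requires $K\in H^{1,p}_{\loc}(\Omega;w)$ with $K^{sq-1}|\nabla K|^p$ locally integrable, which is not known at that stage ($K$ is a priori only $(p,w)$-superharmonic, possibly $\equiv\infty$, and need not lie in any energy space); (b) the assertion that $c_VK^{s}$ has vanishing boundary values is false in general — the paper explicitly notes that $\w_{p,w}\mu$ need not satisfy the Dirichlet condition even in the renormalized sense; (c) no comparison principle between a subsolution and a merely $(p,w)$-superharmonic supersolution of the measure-coefficient sublinear equation is available in this generality, and the paper never uses one. What the paper does instead (Theorem 5.2 together with Lemma 5.1) is purely iterative: from $v\ge\w(v^q\sigma)\ge a^{q/(p-1)}\w\sigma_a$ on $\sigma_a=\mathbf 1_{\{\w(v^q\sigma)>a\}}\sigma$ it bootstraps $k$ times using only the measure-comparison principle of Theorem 3.1 (where the smaller function is an $H^{1,p}_0$ approximant, so the hypothesis is checkable) and the pointwise inequality $(\w\sigma_a)^{\beta_{i+1}}\le\beta_{i+1}\,\w((\w\sigma_a)^{\beta_i q}\sigma_a)$, whose product of constants telescopes exactly to $c_V^{-1}$. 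Your subsolution heuristic correctly predicts the constant, but it is the iteration, not a sub/supersolution comparison, that makes it rigorous.

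Two smaller points. First, in \ref{enum:03@main_theorem} $\Rightarrow$ \ref{enum:01@main_theorem} you deduce \eqref{eqn:energy_cond} from \eqref{eqn:weighted_norm_ineq} without saying how: the natural test function $f=K^{sq}$ is only admissible once you already know \eqref{eqn:energy_cond}; the paper breaks the circularity by testing with $f=(\w_{p,w}\sigma_k)^{q(p-1)/(p-1-q)}\mathbf 1_{F_k}$ for the $\s_{00}$-truncations $\sigma_k$ from Theorem 3.5 (so $f\in L^\infty$), applying Lemma 5.1, and passing to the limit by monotone convergence. Second, your closing paragraph correctly identifies that the construction of $\w_{p,w}\mu$ for infinite $\mu\in\M_0^+(\Omega)$, its minimality, and its stability under increasing data are where the technical work lies, but these are prerequisites (Sections 3--4 of the paper) rather than steps you supply; as written the proposal defers exactly the parts that make the key inequality true.
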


The boundary behavior of $u$ is not discussed in Theorem \ref{thm:main_theorem}; however, under appropriate assumptions on $\sigma$,
we can prove that $u$ vanishes on $\del \Omega$ (see Proposition \ref{prop:energy_class} and Corollary \ref{cor:energy_esti}).
One such condition is as follows.

\begin{theorem}\label{thm:main_theorem_FE}
Let $\Omega$ be a bounded domain in $\R^{n}$.
Let $1 < p < \infty$ and $0 < q < p - 1$.
Suppose that $\sigma \in \M^{+}_{0}(\Omega) \setminus \{ 0 \}$.
Then there exists a unique positive weak solution $u \in H_{0}^{1, p}(\Omega; w)$ to \eqref{eqn:p-laplace}
if and only if
\begin{equation}\label{eqn:energy_cond@FE}
\int_{\Omega} \left( \w_{p, w} \sigma \right)^{ \frac{(1 + q)(p - 1)}{p - 1 - q} } \, d \sigma
<
\infty.
\end{equation}
Moreover,
\[
c_{V}^{1 + q}
\int_{\Omega} \left( \w_{p, w} \sigma \right)^{ \frac{(1 + q)(p - 1)}{p - 1 - q} } \, d \sigma
\le
\| \nabla u \|_{L^{p}(w)}^{p}
\le
\int_{\Omega} \left( \w_{p, w} \sigma \right)^{ \frac{(1 + q)(p - 1)}{p - 1 - q} } \, d \sigma.
\]
\end{theorem}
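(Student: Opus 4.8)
The strategy is to read off Theorem~\ref{thm:main_theorem_FE} from Theorem~\ref{thm:main_theorem} specialized to $\gamma = 1$. The point that makes the constants match is that for $\gamma = 1$ the quantity $c_{E}$ in \eqref{eqn:opt-const} equals $\left(\tfrac{p-1+1}{p}\right)^{p}\tfrac{1}{1} = 1$, so that the chain of inequalities in Theorem~\ref{thm:main_theorem} degenerates to
\[
c_{V}\, C_{2} \le C_{1} \le C_{2},
\qquad
C_{2} := \left( \int_{\Omega} \left( \w_{p, w}\sigma \right)^{\frac{(1+q)(p-1)}{p-1-q}} \, d\sigma \right)^{\frac{1}{1+q}},
\]
and condition \eqref{eqn:energy_cond@FE} is precisely statement \ref{enum:02@main_theorem} of Theorem~\ref{thm:main_theorem} with $\gamma = 1$. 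What has to be supplied on top of Theorem~\ref{thm:main_theorem} is threefold: that the minimal $(p, w)$-superharmonic solution it provides has finite weighted Dirichlet energy and zero boundary values (i.e.\ belongs to $H_{0}^{1, p}(\Omega; w)$); the energy identity $\|\nabla u\|_{L^{p}(w)}^{p} = \|u\|_{L^{1+q}(\sigma)}^{1+q}$; and uniqueness of the positive weak solution.

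For the necessity of \eqref{eqn:energy_cond@FE}, let $u \in H_{0}^{1, p}(\Omega; w)$ be a positive weak solution. Testing the equation with the truncations $\min(u, k) \in H_{0}^{1, p}(\Omega; w)$ and letting $k \to \infty$, monotone convergence on both sides gives $\|\nabla u\|_{L^{p}(w)}^{p} = \int_{\Omega} u^{1+q}\,d\sigma$. Thus $u$ is a nontrivial nonnegative $(p, w)$-superharmonic supersolution to $-\laplacian_{p, w} u = \sigma u^{q}$ with $\|u\|_{L^{1+q}(\sigma)} < \infty$, so statement \ref{enum:01@main_theorem} of Theorem~\ref{thm:main_theorem} holds for $\gamma = 1$; hence \ref{enum:02@main_theorem}, i.e.\ \eqref{eqn:energy_cond@FE}, holds as well.

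For sufficiency, assume \eqref{eqn:energy_cond@FE}, i.e.\ \ref{enum:02@main_theorem} with $\gamma = 1$. Theorem~\ref{thm:main_theorem} yields a minimal positive $(p, w)$-superharmonic solution $u$ to $-\laplacian_{p, w} u = \sigma u^{q}$ with $\|u\|_{L^{1+q}(\sigma)} \le C_{1}$; since $u$ is itself a competitor in \ref{enum:01@main_theorem} we also have $\|u\|_{L^{1+q}(\sigma)} \ge C_{1}$, hence $\|u\|_{L^{1+q}(\sigma)} = C_{1}$. Writing $\mu := \sigma u^{q} \in \M^{+}_{0}(\Omega)$, we have $u = \w_{p, w}\mu$ and the energy of $\mu$ is $\int_{\Omega} u\,d\mu = \int_{\Omega} u^{1+q}\,d\sigma = \|u\|_{L^{1+q}(\sigma)}^{1+q} < \infty$; by the solvability and regularity theory for measure-data Dirichlet problems developed in this paper, the potential of a finite-energy measure belongs to $H_{0}^{1, p}(\Omega; w)$ and satisfies $\|\nabla \w_{p, w}\mu\|_{L^{p}(w)}^{p} = \int_{\Omega} \w_{p, w}\mu \, d\mu$. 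Applying this to $\mu$ shows that $u \in H_{0}^{1, p}(\Omega; w)$ is a positive weak solution with $\|\nabla u\|_{L^{p}(w)}^{p} = \|u\|_{L^{1+q}(\sigma)}^{1+q} = C_{1}^{1+q}$; together with $c_{V} C_{2} \le C_{1} \le C_{2}$, raising to the power $1+q$ yields the two-sided energy bound of the statement for this $u$, and uniqueness (proved below) shows it is the only one. I expect this step---establishing $u \in H_{0}^{1, p}(\Omega; w)$, i.e.\ that the minimal $p$-superharmonic solution genuinely has finite energy and vanishes on $\del\Omega$---to be the main obstacle, the apparent circularity (testing against $u$ presupposes $u \in H_{0}^{1, p}$) being resolved by first using the a priori bound $\|u\|_{L^{1+q}(\sigma)} < \infty$ from Theorem~\ref{thm:main_theorem} together with truncation.

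Finally, uniqueness is the standard sublinear (D\'{\i}az--Sa\'{a} / Brezis--Oswald) phenomenon. If $u_{1}, u_{2} \in H_{0}^{1, p}(\Omega; w)$ are positive weak solutions, apply Picone's inequality for the weighted $p$-Laplacian with the test functions $u_{2}^{p}/u_{1}^{p-1}$ and $u_{1}^{p}/u_{2}^{p-1}$ (legitimate after truncating $u_{1}, u_{2}$ from above and away from $0$ and passing to the limit) to get
\[
\int_{\Omega} \frac{u_{2}^{p}}{u_{1}^{p-1}}\, u_{1}^{q} \, d\sigma \le \|\nabla u_{2}\|_{L^{p}(w)}^{p} = \int_{\Omega} u_{2}^{1+q}\,d\sigma,
\qquad
\int_{\Omega} \frac{u_{1}^{p}}{u_{2}^{p-1}}\, u_{2}^{q} \, d\sigma \le \int_{\Omega} u_{1}^{1+q}\,d\sigma.
\]
Adding and rearranging gives $\int_{\Omega} \left( u_{1}^{q-(p-1)} - u_{2}^{q-(p-1)} \right)\left( u_{2}^{p} - u_{1}^{p} \right) d\sigma \le 0$; since $0 < q < p-1$, the maps $t \mapsto t^{q-(p-1)}$ and $t \mapsto t^{p}$ are, respectively, strictly decreasing and strictly increasing, so the integrand is nonnegative pointwise, forcing $u_{1} = u_{2}$ $\sigma$-a.e. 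Consequently $\sigma u_{1}^{q} = \sigma u_{2}^{q}$, so $u_{1}$ and $u_{2}$ are both weak solutions of the single Dirichlet problem $-\laplacian_{p, w} z = \sigma u_{1}^{q}$ in $\Omega$, $z = 0$ on $\del\Omega$, and strict monotonicity of $\laplacian_{p, w}$ gives $u_{1} = u_{2}$ in $\Omega$.
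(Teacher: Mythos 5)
Your proposal is correct and follows essentially the same route as the paper: specialize Theorem \ref{thm:main_theorem} to $\gamma = 1$ (where $c_E = 1$), use the energy identity $\| \nabla u \|_{L^{p}(w)}^{p} = \| u \|_{L^{1+q}(\sigma)}^{1+q}$ for the necessity direction, and for sufficiency upgrade the minimal $(p,w)$-superharmonic solution to a finite-energy weak solution via the fact that $\sigma u^{q} \in \s^{1} = \s_{0}$ (this is exactly what Proposition \ref{prop:energy_class} and Corollary \ref{cor:energy_esti} supply). The only cosmetic difference is that you spell out uniqueness through a Picone/D\'iaz--Sa\'a inequality, whereas the paper delegates it to the equivalent hidden-convexity argument of \cite[Theorem 5.1]{MR3311903}.
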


We also give the Cascante-Ortega-Verbitsky type of theorem below.

\begin{theorem}\label{thm:embedding}
Let $\Omega$ be a bounded domain in $\R^{n}$.
Let $1 < p < \infty$ and let $-1 < q < p - 1$.
Suppose that $C_{T}$ is the best constant of \eqref{eqn:trace_ineq}.
Then
\[
C_{T}^{ \frac{(1 + q)p}{p - 1 - q} }
\le
\int_{\Omega} \left( \w_{p, w} \sigma \right)^{ \frac{(1 + q)(p - 1)}{p - 1 - q} } \, d \sigma
\le
\left( \frac{1}{1 + q} \right)^{ \frac{1 + q}{p - 1 - q} } \frac{1}{c_{V}^{1 + q}} \,
C_{T}^{ \frac{(1 + q)p}{p - 1 - q} }.
\]
In particular, if \eqref{eqn:trace_ineq} holds with some $-1 < q < p - 1$,
then the equation $- \laplacian_{p, w} u = \sigma$ in $\Omega$ has 
a minimal positive $(p, w)$-superharmonic solution.
\end{theorem}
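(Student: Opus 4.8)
The plan is to prove the two one-sided bounds separately; the final assertion about the existence of a minimal positive solution then follows at once from the upper bound. Write $u:=\w_{p,w}\sigma$ and $I:=\int_{\Omega}(\w_{p,w}\sigma)^{\frac{(1+q)(p-1)}{p-1-q}}\,d\sigma$, and set $\beta:=\frac{p-1}{p-1-q}$, the unique exponent with $(\beta-1)p+1=\beta(1+q)=\frac{(1+q)(p-1)}{p-1-q}$. For the lower bound $C_T^{\frac{(1+q)p}{p-1-q}}\le I$ there is nothing to prove when $I=\infty$, so assume $I<\infty$; then $u$ is finite $\sigma$-almost everywhere, hence (assuming $\sigma\neq0$, the case $\sigma=0$ being trivial) $u\not\equiv\infty$, so $u$ is a genuine positive $(p,w)$-superharmonic solution of $-\laplacian_{p,w}u=\sigma$. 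First I would establish the Picone-type bound
\[
\int_{\Omega} v^{p}\,u^{1-p}\,d\sigma\le\|\nabla v\|_{L^{p}(w)}^{p},\qquad v\in C_c^{\infty}(\Omega),
\]
by inserting $\varphi=v^{p}u^{1-p}$ into the equation $-\laplacian_{p,w}u=\sigma$ (admissible since $v$ is compactly supported and $u$ is lower semicontinuous and strictly positive in $\Omega$) and using the pointwise inequality $|\nabla v|^{p}\ge|\nabla u|^{p-2}\nabla u\cdot\nabla(v^{p}u^{1-p})$; the manipulation with $u$ is carried out first for the approximants $u_j:=\w_{p,w}\sigma_j\uparrow u$, $\sigma_j\uparrow\sigma$, of Definition~\ref{def:potential} and then passed to the limit by monotone convergence. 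Hölder's inequality with the conjugate exponents $\frac{p}{1+q}$ and $\frac{p}{p-1-q}$ (both in $(1,\infty)$ precisely because $-1<q<p-1$), applied to the splitting $v^{1+q}=(v^{p}u^{1-p})^{\frac{1+q}{p}}\cdot u^{\frac{(p-1)(1+q)}{p}}$, then gives
\[
\int_{\Omega}v^{1+q}\,d\sigma\le\Bigl(\int_{\Omega}v^{p}u^{1-p}\,d\sigma\Bigr)^{\frac{1+q}{p}}I^{\frac{p-1-q}{p}}\le\|\nabla v\|_{L^{p}(w)}^{1+q}\,I^{\frac{p-1-q}{p}},
\]
which is $C_T\le I^{\frac{p-1-q}{p(1+q)}}$, i.e.\ $C_T^{\frac{(1+q)p}{p-1-q}}\le I$.

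For the upper bound I would substitute a multiple of $u^{\beta}$ into \eqref{eqn:trace_ineq}. Testing $-\laplacian_{p,w}u_j=\sigma_j$ against $u_j^{\beta(1+q)}$ yields the Caccioppoli-type identity
\[
\beta(1+q)\int_{\Omega}u_j^{(\beta-1)p}\,|\nabla u_j|^{p}\,w\,dx=\int_{\Omega}u_j^{\beta(1+q)}\,d\sigma_j,
\]
whose right-hand side is finite because $\sigma_j$ is a finite measure and $u_j$ is bounded. Hence $\|\nabla(u_j^{\beta})\|_{L^{p}(w)}^{p}=\beta^{p}\int_{\Omega}u_j^{(\beta-1)p}|\nabla u_j|^{p}w\,dx=\frac{\beta^{p-1}}{1+q}\int_{\Omega}u_j^{\beta(1+q)}\,d\sigma_j$, and applying \eqref{eqn:trace_ineq} to $u_j^{\beta}\in H_{0}^{1,p}(\Omega;w)$ (legitimate by density of $C_c^{\infty}(\Omega)$, using that $C_T<\infty$ forces $\sigma$ to charge no set of zero $(p,w)$-capacity) gives
\[
\Bigl(\int_{\Omega}u_j^{\beta(1+q)}\,d\sigma\Bigr)^{\frac{1}{1+q}}\le C_T\,\frac{\beta^{1/p'}}{(1+q)^{1/p}}\Bigl(\int_{\Omega}u_j^{\beta(1+q)}\,d\sigma_j\Bigr)^{\frac{1}{p}}.
\]
Since $\int u_j^{\beta(1+q)}\,d\sigma_j\le\int u_j^{\beta(1+q)}\,d\sigma<\infty$, I can divide, solve for $\int u_j^{\beta(1+q)}\,d\sigma$ and simplify the resulting constant with $\beta=\frac{p-1}{p-1-q}$ --- this is exactly where the factors $c_V^{-(1+q)}$ and $\bigl(\tfrac{1}{1+q}\bigr)^{\frac{1+q}{p-1-q}}$ of \eqref{eqn:opt-const} appear; letting $j\to\infty$ and using $u_j\uparrow u$ with monotone convergence ($\beta(1+q)>0$) turns the left-hand side into $I$, giving $I\le\bigl(\tfrac{1}{1+q}\bigr)^{\frac{1+q}{p-1-q}}c_V^{-(1+q)}\,C_T^{\frac{(1+q)p}{p-1-q}}$.

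If \eqref{eqn:trace_ineq} holds with $C_T<\infty$, the upper bound shows $I<\infty$, hence $u=\w_{p,w}\sigma\not\equiv\infty$ is the asserted minimal positive $(p,w)$-superharmonic solution of $-\laplacian_{p,w}u=\sigma$; if \eqref{eqn:trace_ineq} fails then $C_T=\infty$ and, by the lower bound just proved, $I=\infty$, so both displayed inequalities hold trivially. I expect the main obstacle to be the rigorous treatment of $\w_{p,w}\sigma$ as a test function: a priori it need not belong to $H_{0}^{1,p}(\Omega;w)$ and may be unbounded, so each identity above has to be proved first for the approximants $u_j$ --- for which the relevant weak formulations and the Picone inequality are available within the energy space --- and then propagated to $u$ by monotone/Fatou arguments; this is precisely the ``Dirichlet problem with infinite measure data'' analysis developed earlier in the paper. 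A minor additional point, relevant only when $q<0$ (so that $\beta<1$ and $\beta(1+q)<1$, and the powers $u_j^{\beta}$, $u_j^{\beta(1+q)}$ are merely Hölder continuous at the zero set of $u_j$ on $\del\Omega$), is that one must replace $u_j^{\beta(1+q)}$ and $u_j^{\beta}$ by $(u_j+\varepsilon)^{\beta(1+q)}-\varepsilon^{\beta(1+q)}$ and $(u_j+\varepsilon)^{\beta}-\varepsilon^{\beta}$ and pass to the limit $\varepsilon\downarrow0$.
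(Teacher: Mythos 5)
Your proof is correct and follows essentially the same route as the paper: the lower bound $C_T^{\frac{(1+q)p}{p-1-q}}\le I$ via the Picone-type inequality combined with H\"older (which the paper packages as Lemma \ref{lem:trace_estimate}), and the upper bound by inserting $u_j^{\beta}$ with $\beta=\frac{p-1}{p-1-q}$ into \eqref{eqn:trace_ineq} together with the integration-by-parts identity \eqref{eqn:integrating_by_parts}, everything being carried out on the $\s_{00}$-approximants of Theorem \ref{thm:approximation} and passed to the limit by monotone convergence. The constants you compute agree with \eqref{eqn:opt-const}, and your closing remark on the minimal solution is exactly how the paper deduces the final assertion from Proposition \ref{prop:potentials}.
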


\begin{remark}
The constants in Theorems \ref{thm:main_theorem}-\ref{thm:embedding}
do not depend on $n$ or the data of $w$.
Due to the qualitative arguments in the proof, $w$ must be $p$-admissible; however, its quantitative properties, especially the Sobolev-type inequalities are not used.
\end{remark}

In particular, if \eqref{eqn:trace_ineq} holds with $q > 0$, then there exists a unique positive weak solution $u$ to \eqref{eqn:p-laplace} such that
\[
\frac{1}{c(p, q)} C_{T}^{ \frac{1 + q}{p - 1 - q} } \le\| \nabla u \|_{L^{p}(w)} \le c(p, q) C_{T}^{ \frac{1 + q}{p - 1 - q} }.
\]

Examples of concrete sufficient conditions for \eqref{eqn:energy_cond}
will be discussed at the end of the paper.
One is a Lorentz scale refinement of \cite[Theorem 5.5]{MR1272564},
and the other is quasilinear ordinary differential equations with nonintegrable Hardy-type coefficients.

\subsection*{Organization of the paper}
Section \ref{sec:preliminaries} presents various facts in nonlinear potential theory and introduces classes of smooth measures.
Section \ref{sec:potentials} discusses minimal $p$-superharmonic solutions to Dirichlet problems.
Section \ref{sec:generalized_energy} defines the generalized energy of $p$-superharmonic functions
and investigates its properties including Theorem \ref{thm:embedding}.
Section \ref{sec:proof_of_main_theorem} provides the proof of Theorem \ref{thm:main_theorem}
by using results in Sections \ref{sec:potentials} and \ref{sec:generalized_energy}.
Sections \ref{sec:example1} and \ref{sec:example2} discuss two applications of Theorems \ref{thm:main_theorem}-\ref{thm:embedding}.

\subsection*{Acknowledgments}
The author would like to thank Professor Verbitsky for suggesting references to added to an earlier version of the manuscript.
This work was supported by JSPS KAKENHI Grant Number JP18J00965.

\subsection*{Notation}
We use the following notation.
Let $\Omega$ be a domain (connected open subset) in $\R^{n}$. 
\begin{itemize}
\item
$\mathbf{1}_{E}(x) :=$ the indicator function of a set $E$.
\item
$C_{c}^{\infty}(\Omega) :=$
the set of all infinitely differentiable functions with compact support in $\Omega$.
\item
$\M^{+}(\Omega) :=$ the set of all nonnegative Radon measures on $\Omega$.
\item
$L^{p}(\mu) :=$ the $L^{p}$ space with respect to $\mu \in \M^{+}(\Omega)$.
\end{itemize}
For a ball $B = B(x, R)$ and $\lambda > 0$, $\lambda B := B(x, \lambda R)$.
For measures $\mu$ and $\nu$, we denote $\nu \le \mu$ if $\mu - \nu$ is a nonnegative measure.
For a sequence of extended real valued functions $\{ f_{j} \}_{j = 1}^{\infty}$,
we denote $f_{j} \uparrow f$
if $f_{j + 1} \ge f_{j}$ for all $j \ge 1$ and $\lim_{j \to \infty} f_{j} = f$.
Moreover, $c$ and $C$ denote various constants with and without indices.

\section{Preliminaries}\label{sec:preliminaries}

We first recall the basic properties of $p$-admissible weights from \cite[Chapter A.2]{MR2867756}, \cite[Chapter 20]{MR2305115} and references therein.
Throughout the paper, $1 < p < \infty$ is a fixed constant.
A Lebesgue measurable function $w$ on $\R^{n}$
to be said as \textit{weight} on $\R^{n}$ if $w \in L^{1}_{\loc}(\R^{n}; dx)$ and $w(x) > 0$ $dx$-a.e.
We write $w(E) = \int_{E} w \, dx$ for a Lebesgue measurable set $E \subset \R^{n}$.
We always assume that $w$ is \textit{$p$-admissible},
that is, positive constants $C_{D}$, $C_{P}$ and $\lambda \ge 1$ exist, such that
\[
w(2B) \le C_{D} w(B)
\]
and
\[
\fint_{B} |f - f_{B}| \, dw \le C_{P} \, \diam(B) \left( \fint_{\lambda B} |\nabla f|^{p} \, dw \right)^{\frac{1}{p}},
\quad
\forall f \in C_{c}^{\infty}(\R^{n}),
\]
where $B$ is an arbitrary ball in $\R^{n}$,
$\fint_{B} = w(B)^{-1} \int_{B}$
and
$f_{B} = \fint_{B} f \, dw$.
One of the important properties of $p$-admissible weights is the Sobolev inequality (\cite{MR1098839, MR1150597}).
In particular, the following form of the Poincar\'{e} inequality holds:
\begin{equation*}\label{eqn:poincare}
\int_{B} |f|^{p} \, dw
\le
C \, \diam(B)^{p} \int_{B} |\nabla f|^{p} \, dw,
\quad
\forall f \in C_{c}^{\infty}(B),
\end{equation*}
where $C$ is a constant depending only on $p$, $C_{D}$, $C_{P}$ and $\lambda$.

Next, we recall basics of nonlinear potential theory from \cite[Chapters 1-10 and 21]{MR2305115}.
Let $\Omega$ be a bounded domain in $\R^{n}$.
The weighted Sobolev space $H^{1, p}(\Omega; w)$ is the closure of $C^{\infty}(\Omega)$
with respect to the norm
\[
\| u \|_{H^{1, p}(\Omega; w)}
:=
\left(
\int_{\Omega} |u|^{p} + |\nabla u|^{p} \, d w
\right)^{\frac{1}{p}},
\]
where $\nabla u$ is the gradient of $u$.
The corresponding local space $H^{1, p}_{\loc}(\Omega; w)$ is defined in the usual manner.
We denote the closure of $C_{c}^{\infty}(\Omega)$ in $H^{1, p}(\Omega; w)$ by $H_{0}^{1, p}(\Omega; w)$.
Since $\Omega$ is bounded, we can take $\| \nabla \cdot \|_{L^{p}(\Omega; w)}$ 
as the norm of $H_{0}^{1, p}(\Omega; w)$ by the Poincar\'{e} inequality.

For $u \in H^{1, p}_{\loc}(\Omega; w)$,
we define the weighted $p$-Laplace operator $\laplacian_{p, w}$ by
\[
\langle - \laplacian_{p, w} u, \varphi \rangle
=
\int_{\Omega} |\nabla u|^{p - 2} \nabla u \cdot \nabla \varphi \, dw,
\quad
\forall \varphi \in C_{c}^{\infty}(\Omega).
\]
A function $u \in H^{1, p}_{\loc}(\Omega; w)$ is called a \textit{supersolution} to
\begin{equation}\label{eqn:p-harmonic}
- \laplacian_{p, w} u = 0 \quad \text{in} \ \Omega
\end{equation}
if $\langle - \laplacian_{p, w} u, \varphi \rangle \geq 0$ for all nonnegative $\varphi \in C_{c}^{\infty}(\Omega)$.
If $\mu$ is an element of the dual of $H_{0}^{1, p}(\Omega; w)$,
then the Dirichlet problem
\begin{equation}\label{eqn:variational_problem}
\begin{cases}
\langle - \laplacian_{p, w} u, \varphi \rangle = \langle \mu, \varphi \rangle
\quad
\forall \varphi \in H_{0}^{1, p}(\Omega; w)
\\
u \in H_{0}^{1, p}(\Omega; w)
\end{cases}
\end{equation}
has a unique weak solution $u$.

Let $\Omega \subset \R^{n}$ be open, and let $K \subset \Omega$ be compact.
The \textit{variational $(p, w)$-capacity} $\capacity_{p, w}(K, \Omega)$
of the condenser $(K, \Omega)$ is defined by
\[
\capacity_{p, w}(K, \Omega)
:=
\inf \left\{
\| \nabla u \|_{L^{p}(\Omega; w)}^{p} \colon u \geq 1 \ \text{on} \ K, \ u \in C_{c}^{\infty}(\Omega)
\right\}.
\]
Moreover, for $E \subset \Omega$, we define
\[
\capacity_{p, w}(E, \Omega)
:=
\inf_{\substack{ E \subset U \subset \Omega \\ U \colon \text{open} }}
\sup_{K \subset U \colon \text{compact}}
\capacity_{p , w}(K, \Omega).
\]
Since $\Omega \subset \R^{n}$ is bounded,
$\capacity_{p, w}(E, \Omega) = 0$ if and only if $C_{p, w}(E) = 0$,
where $C_{p, w}(\cdot)$ is the \textit{(Sobolev) capacity} of $E$.
We say that a property holds \textit{quasieverywhere} (q.e.)
if it holds except on a set of $(p, w)$-capacity zero.
An extended real valued function $u$ on $\Omega$ is called as \textit{quasicontinuous}
if for every $\epsilon > 0$ there exists an open set $G$ such that
$C_{p, w}(G) < \epsilon$ and $u|_{\Omega \setminus G}$ is continuous.
Every $u \in H^{1, p}_{\loc}(\Omega; w)$ has a quasicontinuous representative $\tilde{u}$
such that $u = \tilde{u}$ a.e.

A function $u \colon \Omega \to ( - \infty, \infty]$ is called \textit{$(p, w)$-superharmonic} if
$u$ is lower semicontinuous in $\Omega$, is not identically infinite,
and satisfies the comparison principle on each subdomain $D \Subset \Omega$;
if $h \in C(\cl{D})$ is a continuous weak solution to $- \laplacian_{p, w} u = 0$ in $D$,
and if $u \geq h$ on $\del D$, then $u \geq h$ in $D$.
If $u$ is a bounded $(p, w)$-superharmonic function,
then $u$ belongs to $H^{1, p}_{\loc}(\Omega; w)$ and is a supersolution to \eqref{eqn:p-harmonic}.
Conversely, if $u$ is a supersolution to \eqref{eqn:p-harmonic},
then its \textit{lsc-regularization}
\[
u^{*}(x)
=
\lim_{r \to 0} \essinf_{B(x, r)} u
\]
is $(p, w)$-superharmonic in $\Omega$.
If $u$ and $v$ are $(p, w)$-superharmonic in $\Omega$ and $u(x) \leq v(x)$ for a.e. $x \in \Omega$,
then $u(x) \leq v(x)$ for all $x \in \Omega$.
Every $(p, w)$-superharmonic function is known to be quasicontinuous.
In particular, the set $\{ u = \infty \}$ has zero $(p, w)$-capacity whenever $u$ is $(p, w)$-superharmonic.

Assume that $u$ is a $(p, w)$-superharmonic function in $\Omega$.
Its truncation $u_{k} = \min\{ u, k \}$ will then become a supersolution to \eqref{eqn:p-harmonic} for all $k > 0$.
Moreover, there exists a Radon measure $\mu[u]$ such that
\[
\lim_{k \to \infty}
\int_{\Omega} |\nabla u_{k}|^{p - 2} \nabla u_{k} \cdot \nabla \varphi \, dw
=
\int_{\Omega} \varphi \, d \mu[u],
\quad
\forall \varphi \in C_{c}^{\infty}(\Omega).
\]
The measure $\mu[u]$ is called the \textit{Riesz measure} of $u$.
By definition, if $u \in H^{1, p}_{\loc}(\Omega; w)$, then $\mu[u] = - \laplacian_{p, w} u$ in the sense of distribution.

As in Section \ref{sec:introduction},
we denote by $\M^{+}_{0}(\Omega)$ the set of all Radon measures $\mu$
that are absolutely continuous with respect to the $(p, w)$-capacity, i.e.,
$\mu(E) = 0$ whenever $E$ has zero $(p, w)$-capacity.
If $u \in H^{1, p}_{\loc}(\Omega; w)$ is $(p, w)$-superharmonic in $\Omega$,
then the Riesz measure of $u$ belongs to $\M^{+}_{0}(\Omega)$.
It is known that if $\mu \in \M^{+}_{0}(\Omega)$ is finite,
then the integral $\int_{\Omega} f \, d \mu$ is well-defined for any quasicontinuous function $f$ on $\Omega$.

If $u \in H^{1, p}(\Omega; w)$ is a supersolution to \eqref{eqn:p-harmonic},
then the Riesz measure of $u^{*}$ satisfies
\begin{equation}\label{eqn:finite_energy}
\int_{\Omega} \varphi \, d \mu
\le
C \| \varphi \|_{H^{1, p}(\Omega; w)},
\quad \forall \varphi \in C_{c}^{\infty}(\Omega), \ \varphi \ge 0.
\end{equation}
If $\mu$ is also finite, then we can replace $C_{c}^{\infty}(\Omega)$ with $H_{0}^{1, p}(\Omega; w)$ up to taking a quasicontinuous representative,
i.e., the dual action of $\mu$ has the integral representative.
Conversely, if a finite measure $\mu$ satisfies \eqref{eqn:finite_energy},
then there exists a unique weak solution $u \in H_{0}^{1, p}(\Omega; w)$ to
\[
\int_{\Omega} |\nabla u|^{p - 2} \nabla u \cdot \nabla \varphi \, dw
=
\int_{\Omega} \tilde{ \varphi } \, d \mu
\quad
\forall \varphi \in H_{0}^{1, p}(\Omega; w).
\]

The following weak continuity result was given by Trudinger and Wang \cite{MR1890997}.

\begin{theorem}[{\cite[Theorem 3.1]{MR1890997}}]\label{thm:TW}
Suppose that $\{ u_{k} \}_{k = 1}^{\infty}$ is a sequence of nonnegative $(p, w)$-superharmonic functions in $\Omega$.
Assume that $u_{k} \to u$ a.e. in $\Omega$ and that $u$ is $(p, w)$-superharmonic in $\Omega$.
Let $\mu[u_{k}]$ and $\mu[u]$ be the Riesz measures of $u_{k}$ and $u$, respectively.
Then $\mu[u_{k}]$ converges to $\mu[u]$ weakly, that is
\[
\int_{\Omega} \varphi \, d \mu[u_{k}] \to \int_{\Omega} \varphi \, d \mu[u],
\quad
\forall \varphi \in C_{c}^{\infty}(\Omega).
\]
\end{theorem}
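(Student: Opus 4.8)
The plan is to prove weak convergence of the Riesz measures by upgrading the a.e.\ convergence $u_k\to u$ to a.e.\ convergence of the gradients and then passing to the limit in the nonlinear flux $|\nabla u_k|^{p-2}\nabla u_k$. Recall that every nonnegative $(p,w)$-superharmonic function $v$ has a gradient $\nabla v$ (the a.e.\ limit of $\nabla\min\{v,m\}$ as $m\to\infty$) lying in $L^{s}_{\loc}(\Omega;w)$ for some $s=s(p,w)>p-1$ --- the weighted version of the Kilpel\"ainen--Mal\'y / Trudinger--Wang gradient estimate, which is the only place the weighted Sobolev inequality is needed --- and that $\int_{\Omega}\varphi\,d\mu[v]=\int_{\Omega}|\nabla v|^{p-2}\nabla v\cdot\nabla\varphi\,dw$ for every $\varphi\in C_{c}^{\infty}(\Omega)$ (the ``very weak'' formulation). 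Applying this estimate to the truncations $\min\{u_k,j\}$ and combining it with a weak Harnack bound and with the a.e.\ convergence (which keeps $\essinf_{B}u_k$ bounded in $k$ on a suitable ball $B$) gives uniform local bounds $\sup_k\|\nabla u_k\|_{L^{s}(K;w)}<\infty$ for every $K\Subset\Omega$; hence $\{|\nabla u_k|^{p-2}\nabla u_k\}_k$ is bounded in $L^{s/(p-1)}_{\loc}(\Omega;w)$ with $s/(p-1)>1$, so it is equi-integrable on compact sets, and in particular $\sup_k\mu[u_k](K)<\infty$.

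\emph{Reduction to the bounded case.} For $j\in\N$ put $v_k^{j}:=\min\{u_k,j\}$ and $v^{j}:=\min\{u,j\}$; these are bounded $(p,w)$-superharmonic functions, hence lie in $H^{1,p}_{\loc}(\Omega;w)$, are supersolutions to \eqref{eqn:p-harmonic}, and $v_k^{j}\to v^{j}$ a.e.\ in $\Omega$. Testing the supersolution inequality for $v_k^{j}$ with $\eta^{p}(j-v_k^{j})$, where $\eta\in C_{c}^{\infty}(\Omega)$ is a cutoff, and using $0\le j-v_k^{j}\le j$, yields a Caccioppoli bound $\int_{\Omega}\eta^{p}|\nabla v_k^{j}|^{p}\,dw\le C(p,j,\eta)$ independent of $k$. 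Therefore $\{v_k^{j}\}_k$ is bounded in $H^{1,p}_{\loc}(\Omega;w)$, and after passing to a subsequence $\nabla v_k^{j}\rightharpoonup\nabla v^{j}$ weakly in $L^{p}_{\loc}(\Omega;w)$ (the weak limit is $\nabla v^{j}$ because $v_k^{j}\to v^{j}$ a.e.).

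\emph{Almost everywhere convergence of gradients --- the crux.} Fix $j$, set $\mathcal{A}(\xi):=|\xi|^{p-2}\xi$, and recall the monotonicity $(\mathcal{A}(\xi)-\mathcal{A}(\zeta))\cdot(\xi-\zeta)\ge0$. For $\epsilon>0$ let $T_{\epsilon}(t):=\max\{-\epsilon,\min\{t,\epsilon\}\}$ and test the equation for $v_k^{j}$ against $\eta^{p}\,T_{\epsilon}(v_k^{j}-v^{j})$; this is admissible since $\mu[v_k^{j}]\in\M^{+}_{0}(\Omega)$ is finite and $v_k^{j},v^{j}$ are quasicontinuous. Moving the derivative off $\eta^{p}$ and rearranging, one sees that $\int_{\Omega}\eta^{p}(\mathcal{A}(\nabla v_k^{j})-\mathcal{A}(\nabla v^{j}))\cdot\nabla T_{\epsilon}(v_k^{j}-v^{j})\,dw$ equals a term bounded by $\epsilon\cdot\sup_k\mu[v_k^{j}](\spt\eta)=O(\epsilon)$ uniformly in $k$ --- here the sign of $\mu[v_k^{j}]$ is used --- plus terms that tend to $0$ as $k\to\infty$, because $T_{\epsilon}(v_k^{j}-v^{j})\to0$ a.e.\ and boundedly while $\nabla v_k^{j}\rightharpoonup\nabla v^{j}$ and $\eta^{p}\mathcal{A}(\nabla v^{j})\mathbf{1}_{\{|v_k^{j}-v^{j}|<\epsilon\}}\to\eta^{p}\mathcal{A}(\nabla v^{j})$ strongly in $L^{p'}_{\loc}(\Omega;w)$. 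Since $\nabla T_{\epsilon}(v_k^{j}-v^{j})=(\nabla v_k^{j}-\nabla v^{j})\mathbf{1}_{\{|v_k^{j}-v^{j}|<\epsilon\}}$ and the integrand $\eta^{p}(\mathcal{A}(\nabla v_k^{j})-\mathcal{A}(\nabla v^{j}))\cdot(\nabla v_k^{j}-\nabla v^{j})$ is nonnegative, we get $\limsup_k\int_{\{|v_k^{j}-v^{j}|<\epsilon\}}\eta^{p}(\mathcal{A}(\nabla v_k^{j})-\mathcal{A}(\nabla v^{j}))\cdot(\nabla v_k^{j}-\nabla v^{j})\,dw=O(\epsilon)$; splitting off $\{|v_k^{j}-v^{j}|\ge\epsilon\}$ (whose $w$-measure tends to $0$) and using H\"older with the uniform $L^{p}_{\loc}$ bound, the usual Boccardo--Murat argument gives $(\mathcal{A}(\nabla v_k^{j})-\mathcal{A}(\nabla v^{j}))\cdot(\nabla v_k^{j}-\nabla v^{j})\to0$ in $w$-measure on $\spt\eta$, hence, by strict monotonicity, $\nabla v_k^{j}\to\nabla v^{j}$ in $w$-measure and --- along a further subsequence --- a.e.\ on $\spt\eta$. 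Diagonalizing over an exhaustion by such $\eta$ and over $j$, and using that for a.e.\ $x$ one has $v_k^{j}(x)=u_k(x)$ and $v^{j}(x)=u(x)$ as soon as $j>u(x)$ and $k$ is large, we obtain $\nabla u_k\to\nabla u$ a.e.\ in $\Omega$. Then $|\nabla u_k|^{p-2}\nabla u_k\to|\nabla u|^{p-2}\nabla u$ a.e., and by the equi-integrability of the first paragraph together with Vitali's theorem this convergence holds in $L^{1}_{\loc}(\Omega;w)$, so for every $\varphi\in C_{c}^{\infty}(\Omega)$,
\[
\int_{\Omega}\varphi\,d\mu[u_k]=\int_{\Omega}|\nabla u_k|^{p-2}\nabla u_k\cdot\nabla\varphi\,dw\longrightarrow\int_{\Omega}|\nabla u|^{p-2}\nabla u\cdot\nabla\varphi\,dw=\int_{\Omega}\varphi\,d\mu[u],
\]
and since the limit is independent of the extracted subsequences, the full sequence $\mu[u_k]$ converges to $\mu[u]$ weakly.

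\emph{Main obstacle.} The delicate point is the a.e.\ convergence of the gradients. The functions $u_k$ are merely supersolutions carrying (possibly infinite) measure data, so genuine energy test functions are inadmissible: one must truncate at level $\epsilon$ and crucially exploit the sign of $\mu[u_k]$ to absorb the coupling term $\int\eta^{p}T_{\epsilon}(v_k^{j}-v^{j})\,d\mu[v_k^{j}]$, and the a.e.\ limit $u$ is not assumed to satisfy any equation, so the monotonicity comparison has to be run directly against $v^{j}=\min\{u,j\}$ rather than against a solution. A secondary technical issue is establishing the \emph{uniform-in-$k$} local bounds on $\|\nabla u_k\|_{L^{s}(K;w)}$ and $\mu[u_k](K)$, which requires combining the gradient integrability estimate for $(p,w)$-superharmonic functions with a weak Harnack inequality and using the a.e.\ convergence to keep $\essinf u_k$ locally under control.
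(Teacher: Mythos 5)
This theorem is not proved in the paper at all --- it is quoted verbatim from Trudinger and Wang \cite[Theorem 3.1]{MR1890997} --- so there is no internal proof to compare against; your argument should be measured against the cited source. Your proof is correct and is essentially a faithful reconstruction of the Trudinger--Wang argument: uniform local control of $\essinf u_k$ and hence of $\|\nabla u_k\|_{L^{s}_{\loc}}$ and $\mu[u_k](K)$ via the weak Harnack and gradient-integrability estimates, a Boccardo--Murat monotonicity argument on the truncations $\min\{u_k,j\}$ with test functions $\eta^{p}T_{\epsilon}(v^{j}_{k}-v^{j})$ to get a.e.\ convergence of gradients, and Vitali's theorem to pass to the limit in the very weak formulation. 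The only caveats are routine: you rely on the weighted versions of the gradient estimate and of the identity $\int\varphi\,d\mu[v]=\int|\nabla v|^{p-2}\nabla v\cdot\nabla\varphi\,dw$, both of which are available in Mikkonen's and Heinonen--Kilpel\"{a}inen--Martio's work, so the proof goes through in the $p$-admissible weighted setting exactly as you describe.
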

The following Harnack-type convergence theorem follows from combining Theorem \ref{thm:TW} and \cite[Lemma 7.3]{MR2305115}:
If $\{ u_{k} \}_{k = 1}^{\infty}$ is a nondecreasing sequence of $(p, w)$-superharmonic functions in $\Omega$
and if $u := \lim_{k \to \infty} u_{k} \not \equiv \infty$, 
then $u$ is $(p, w)$-superharmonic in $\Omega$ and $\mu[u_{k}]$ converges to $\mu[u]$ weakly.

The following form of Wolff potential estimate was first established by
Kilpel\"{a}inen and Mal\'{y} \cite{MR1205885,MR1264000}.
The extension to weighted equations is due to Mikkonenn \cite{MR1386213}.
See also \cite{MR1890997,MR3842214} for other proofs.

\begin{theorem}[{\cite[Theorem 3.1]{MR1386213}}]\label{thm:wolff}
Suppose that $u$ is a nonnegative $(p, w)$-superharmonic function in $B(x, 2R)$.
Let $\mu$ be the Riesz measure of $u$.
Then
\[
\frac{1}{C} \W_{1, p, w}^{R} \mu(x)
\leq
u(x)
\leq
C \left(
\inf_{B(x, R)} u
+
\W_{1, p, w}^{2R} \mu(x)
\right),
\] 
where $C \ge 1$ is a constant depending only on $p$, $C_{D}$, $C_{P}$ and $\lambda$, and
$\W_{1, p, w}^{R} \mu$ is the \textit{truncated Wolff potential} of $\mu$, which is defined by
\[
\W_{1, p, w}^{R} \mu (x)
:=
\int_{0}^{R}
\left(
r^{p} \frac{ \mu(B(x, r))}{ w(B(x, r)) }
\right)^{\frac{1}{p - 1}}
\frac{dr}{r}.
\]
\end{theorem}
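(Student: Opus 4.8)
The plan is to follow the Kilpel\"{a}inen--Mal\'{y} argument, in the weighted form due to Mikkonen, proving the lower and upper bounds separately. Throughout I would use only the qualitative consequences of $p$-admissibility of $w$: the doubling property, the Poincar\'{e}/Sobolev inequalities and the attendant Caccioppoli estimate, the weak Harnack inequality for nonnegative supersolutions, and the Harnack inequality for nonnegative $(p,w)$-harmonic functions. As a preliminary reduction set $r_j := 2^{-j}R$, $B_j := B(x_0, r_j)$ and
\[
T_j := \left( r_j^{\,p}\, \frac{\mu(B_j)}{w(B_j)} \right)^{\frac{1}{p-1}}.
\]
Using only doubling and the monotonicity of $r \mapsto \mu(B(x_0,r))$, one checks that $\W_{1,p,w}^{R}\mu(x_0)$ is comparable to $\sum_{j \ge 1} T_j$ and $\W_{1,p,w}^{2R}\mu(x_0)$ to $\sum_{j \ge 0} T_j$, with constants depending only on $p, C_{D}, C_{P}, \lambda$. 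Since $u$ coincides with its lsc-regularization, $u(x_0) = \lim_{j \to \infty} \inf_{B_j} u$, so in both directions the problem is to compare $\sum_j T_j$ with $u(x_0)$ (together with $\inf_{B(x_0,R)} u$ for the upper bound).

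For the lower bound I would work with the truncations $u_k = \min\{u,k\}$, which are nonnegative supersolutions in $H^{1,p}_{\loc}(\Omega;w)$ with Riesz measures $\mu[u_k] \uparrow \mu$. On each dyadic shell, testing the equation for $u_k$ against a suitable cut-off and combining the resulting Caccioppoli inequality with the weak Harnack inequality and with comparison against the $(p,w)$-harmonic lift of $u$ on $B_j$ yields a one-step estimate of the form $\inf_{B_{j+1}} u \ge \inf_{B_j} u + c\, T_j$; letting $k \to \infty$ and summing this telescopically over $j$ gives $u(x_0) = \lim_j \inf_{B_j} u \ge c \sum_j T_j$, hence $u(x_0) \ge \tfrac{1}{C}\W_{1,p,w}^{R}\mu(x_0)$. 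This direction is comparatively soft.

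The upper bound is the substantial part and the step I expect to be the main obstacle: the nonlinearity of $\laplacian_{p,w}$ forbids the naive route of writing $u$ as a $(p,w)$-harmonic lift plus the potential of the local mass and adding the two estimates. Following Kilpel\"{a}inen--Mal\'{y}, I would instead build, along a geometric scale of radii with a ratio to be fixed during the argument, a sequence of $(p,w)$-harmonic functions $w_j$ in $B_j$ with boundary data the truncations $\min\{u, a_j\}$ at carefully chosen levels $a_j$; by comparison these are increasing in $j$ and bounded above by $u$, and the Harnack inequality controls $w_j(x_0)$ by $\inf_{B_{j+1}} w_j$. The heart of the matter is a self-improving \emph{excess estimate}: the increment $a_{j+1} - a_j$ required to keep the induction running is dominated by $C\, T_j$ plus a fixed fraction of the increment at the previous scale, which one proves by testing the equation for a suitable truncation of $u - w_j$ against a De Giorgi--type test function and absorbing the resulting measure term by means of $\mu(B_j)$ and the definition of $T_j$. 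Iterating this inequality, choosing the scale ratio so the geometric losses are summable, and telescoping over all scales then gives $u(x_0) \le C\bigl(\inf_{B(x_0,R)} u + \sum_j T_j\bigr) \le C\bigl(\inf_{B(x_0,R)} u + \W_{1,p,w}^{2R}\mu(x_0)\bigr)$. The delicate points are the bookkeeping of the levels $a_j$, the choice of the scale ratio, and checking that the underlying De Giorgi--Nash--Moser machinery for the degenerate weighted operator --- Harnack for the $w_j$, the Caccioppoli and weak Harnack estimates, and the absolute continuity of $\mu$ with respect to the $(p,w)$-capacity, which guarantees that pointwise values of $(p,w)$-superharmonic functions are well behaved --- goes through with constants independent of $n$ and of the data of $w$; this is precisely the content of $p$-admissibility and is what makes Mikkonen's weighted statement available.
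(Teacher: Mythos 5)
The paper does not prove this theorem: it is imported verbatim from Mikkonen \cite[Theorem 3.1]{MR1386213} (with the unweighted case credited to Kilpel\"{a}inen--Mal\'{y}), so there is no internal proof to compare against. Your outline correctly reconstructs the architecture of that cited proof: the reduction of $\W_{1,p,w}^{R}\mu(x_0)$ to the dyadic sum $\sum_j T_j$ (which indeed uses only doubling and monotonicity of $r\mapsto\mu(B(x_0,r))$, with constants depending on $p$ and $C_D$), the lower bound via the one-step estimate $T_j\le C\bigl(\inf_{B_{j+1}}u-\inf_{B_j}u\bigr)$ telescoped against $u(x_0)=\lim_j\inf_{B_j}u$, and the upper bound via the Kilpel\"{a}inen--Mal\'{y} iteration with $(p,w)$-harmonic comparison functions at levels $a_j$ and a self-improving excess estimate. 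You also correctly identify why the constant depends only on $p$, $C_D$, $C_P$, $\lambda$: all the machinery (Caccioppoli, weak Harnack, Harnack for $(p,w)$-harmonic functions) is quantified by $p$-admissibility alone, which is exactly the point the paper's Remark 1.4 relies on.

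That said, as it stands this is a plan rather than a proof. The two load-bearing steps are named but not established: the one-step lower estimate requires an actual choice of test function (a cut-off times a negative power of $u-\inf_{B_j}u+\epsilon$, say) together with the weak Harnack inequality to convert the resulting bound on $\int|\nabla u|^{p-1}$ into a bound on $\mu(B_j)$; and the excess-decay inequality for the upper bound --- the dominant technical content of \cite{MR1205885,MR1264000,MR1386213} --- is only described in qualitative terms (``one proves by testing \dots and absorbing''), with the choice of levels $a_j$, the scale ratio, and the absorption of the measure term all left open. If your intent is to justify quoting the theorem, the outline suffices and matches the cited source; if the intent is to supply a self-contained proof, you must carry out the excess estimate in detail, and you should also make precise the passage from the truncations $u_k$ (with $\mu[u_k]\uparrow\mu$ only in the weak sense) to the full Riesz measure $\mu$ in both bounds.
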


\section{Minimal $p$-superharmonic solutions}\label{sec:potentials}

The following comparison principle improves \cite[Lemma 5.2]{MR3567503}.
We do not assume the finiteness of the Riesz measures here.

\begin{theorem}\label{thm:comparison_principle}
Let $\Omega$ be a bounded domain in $\R^{n}$.
Let $u$ and $v$ be nonnegative $(p, w)$-superharmonic functions in $\Omega$
with the Riesz measures $\mu$ and $\nu$, respectively.
Assume also that $\mu \le \nu$ and $u \in H_{0}^{1, p}(\Omega; w)$.
Then $u(x) \leq v(x)$ for all $x \in \Omega$.
\end{theorem}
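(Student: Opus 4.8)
The plan is to reduce the claim to $u \le v$ almost everywhere (which implies the pointwise inequality, since $u,v$ are $(p,w)$-superharmonic), and then run a monotonicity/testing argument with a truncated difference of $u$ and $v$. First I would record two consequences of $u \in H_{0}^{1,p}(\Omega;w)$. Since $u$ is then a finite-energy supersolution, \eqref{eqn:finite_energy} says that $\mu$ acts continuously in the $H^{1,p}$-norm on nonnegative $\varphi \in C_{c}^{\infty}(\Omega)$; approximating a nonnegative $\varphi \in H_{0}^{1,p}(\Omega;w)$ by nonnegative $\varphi_{j} \in C_{c}^{\infty}(\Omega)$ (Fatou for the bound, \eqref{eqn:finite_energy} applied to $|\varphi_{j}-\varphi|$ for the convergence) upgrades the integral representation to
\[
\int_{\Omega} |\nabla u|^{p-2}\nabla u \cdot \nabla\varphi \, dw = \int_{\Omega} \tilde{\varphi}\, d\mu, \qquad \forall\, \varphi \in H_{0}^{1,p}(\Omega;w);
\]
taking $\varphi = u$ gives $\int_{\Omega} u\, d\mu = \|\nabla u\|_{L^{p}(\Omega;w)}^{p} < \infty$, so $u \in L^{1}(\mu)$. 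Next, for $k > 0$ put $v_{k} := \min\{v,k\}$, a bounded $(p,w)$-superharmonic function whose Riesz measure $\nu_{k}$ satisfies $\nu_{k} \ge \mathbf{1}_{\{v<k\}}\nu \ge \mathbf{1}_{\{v<k\}}\mu$, since truncation at level $k$ only adds mass on $\{v \ge k\}$.

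Fix $\epsilon > 0$ and set $\Phi_{k} := (u - v_{k} - \epsilon)_{+}$. It is nonnegative, lies in $H^{1,p}_{\loc}(\Omega;w)$, vanishes on $\{u \le \epsilon\}$, and is dominated by $u$. The crucial — and technically hardest — point is that $\Phi_{k} \in H_{0}^{1,p}(\Omega;w)$ and that $\Phi_{k}$ is admissible in the weak formulations of both $u$ and $v_{k}$. For this I would prove a Caccioppoli estimate: testing the equation of $v_{k}$ on $\Omega' \Subset \Omega$ with $\eta^{p}(u_{m} - v_{k} - \epsilon)_{+}$, where $u_{m} = \min\{u,m\}$ and $\eta \in C_{c}^{\infty}(\Omega')$, then applying Young's inequality and absorbing the $|\nabla v_{k}|^{p}$ term, yields
\[
\int_{\Omega} \eta^{p}|\nabla v_{k}|^{p}\, \mathbf{1}_{\{u_{m} > v_{k}+\epsilon\}}\, dw \le C(p)\left( \int_{\Omega} |\nabla u|^{p}\, dw + \int_{\Omega} (u-\epsilon)_{+}^{p}|\nabla\eta|^{p}\, dw \right).
\]
Letting $\eta \uparrow 1$ along cutoffs adapted to the $H_{0}^{1,p}$-function $(u-\epsilon)_{+}$ and then $m \to \infty$ gives $\int_{\{u > v_{k}+\epsilon\}} |\nabla v_{k}|^{p}\, dw \le C(p)\|\nabla u\|_{L^{p}(\Omega;w)}^{p}$ uniformly in $k$; hence $\nabla\Phi_{k} \in L^{p}(\Omega;w)$, and $\Phi_{k} \in H_{0}^{1,p}(\Omega;w)$ by domination by $u$. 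The same cutoff approximation then legitimizes $\Phi_{k}$ as a test function in both weak formulations.

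Subtracting the weak formulations of $u$ and $v_{k}$ tested against $\Phi_{k}$,
\[
I_{k} := \int_{\{u > v_{k}+\epsilon\}} \big( |\nabla u|^{p-2}\nabla u - |\nabla v_{k}|^{p-2}\nabla v_{k} \big) \cdot (\nabla u - \nabla v_{k})\, dw = \int_{\Omega} \Phi_{k}\, d\mu - \int_{\Omega} \Phi_{k}\, d\nu_{k},
\]
and $I_{k} \ge 0$ by monotonicity of the $p$-Laplacian. Using $\nu_{k} \ge \mathbf{1}_{\{v<k\}}\mu$, $\nu_{k} \ge 0$ and $\Phi_{k} \ge 0$, the right-hand side is at most $\int_{\{v \ge k\}} \Phi_{k}\, d\mu = \int_{\{v \ge k\}} (u-k-\epsilon)_{+}\, d\mu \le \int_{\Omega} (u-k)_{+}\, d\mu$, which tends to $0$ as $k \to \infty$ because $u \in L^{1}(\mu)$. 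Hence $I_{k} \to 0$; combined with the elementary vector inequalities for the $p$-Laplacian and the uniform bound on $\int_{\{u > v_{k}+\epsilon\}} |\nabla v_{k}|^{p}\, dw$, this forces $\nabla\Phi_{k} \to 0$ in $L^{p}(\Omega;w)$, i.e.\ $\Phi_{k} \to 0$ in $H_{0}^{1,p}(\Omega;w)$. Since $v_{k} \uparrow v$, we have $\Phi_{k} \downarrow (u-v-\epsilon)_{+}$ pointwise, so $(u-v-\epsilon)_{+} = 0$ a.e.; letting $\epsilon \downarrow 0$ gives $u \le v$ a.e., hence everywhere.

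I expect the Caccioppoli/admissibility step to be the main obstacle: because $v$ (and $v_{k}$) need not belong to $H^{1,p}(\Omega;w)$ globally — its Dirichlet energy may be infinite near $\partial\Omega$ — one must show that $\nabla v_{k}$ is $p$-integrable precisely on the region $\{u > v_{k}+\epsilon\}$, where $v_{k}$ is trapped strictly below the boundary-vanishing function $u$, and must control the cutoff errors near $\partial\Omega$ using only $u \in H_{0}^{1,p}(\Omega;w)$. This is exactly the difficulty that is absent in \cite[Lemma 5.2]{MR3567503}, where the Riesz measures are assumed finite; everything else reduces to the standard monotonicity argument.
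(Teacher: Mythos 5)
Your overall strategy --- reduce to an a.e.\ inequality and run a monotonicity argument with the truncated test function $\Phi_{k} = (u - v_{k} - \epsilon)_{+}$ --- is genuinely different from the paper's, but the step you yourself single out as the main obstacle is, in my view, a real gap rather than a technicality. Since $v_{k}$ is only in $H^{1, p}_{\loc}(\Omega; w)$, its weak formulation is a priori available only against compactly supported test functions, and your Caccioppoli estimate leaves the error term $\int_{\Omega} (u-\epsilon)_{+}^{p} |\nabla \eta|^{p} \, dw$. To let $\eta \uparrow 1$ you need cutoffs $\eta_{j} \in C_{c}^{\infty}(\Omega)$, $\eta_{j} \uparrow 1$, with $\int_{\Omega} (u-\epsilon)_{+}^{p} |\nabla \eta_{j}|^{p} \, dw$ bounded. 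For a general bounded domain (no boundary regularity, no Hardy inequality, arbitrary $p$-admissible weight) and a general nonnegative $u \in H_{0}^{1, p}(\Omega; w)$, such cutoffs are not known to exist: distance-type cutoffs require a Hardy inequality, and cutoffs built from smooth approximants $\phi_{j}$ of $u$ leave you with an integral over $\{\phi_{j} < \epsilon\}$ on which $(u-\epsilon)_{+}$ need not be small. The same difficulty reappears when you try to ``legitimize $\Phi_{k}$ as a test function in both weak formulations'': the set $\{u > v_{k} + \epsilon\}$ need not be compactly contained in $\Omega$, and $|\nabla v_{k}|^{p-1}$ is only locally $p'$-integrable, so passing from compactly supported approximations of $\Phi_{k}$ to $\Phi_{k}$ itself requires precisely the uniform-up-to-the-boundary gradient bound you are trying to establish. (Your other steps --- $u \in L^{1}(\mu)$ via \eqref{eqn:finite_energy} extended to nonnegative $H_{0}^{1,p}$ functions, the inequality $\nu_{k} \ge \mathbf{1}_{\{v < k\}} \nu$, the monotonicity estimate, and the limits $k \to \infty$, $\epsilon \downarrow 0$ --- are fine.)

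The paper sidesteps this issue entirely by never testing against $v$ or $v_{k}$ on all of $\Omega$. It sets $\mu_{k} = \mathbf{1}_{\{v < k\}} \mu \le \nu_{k}$, exhausts $\Omega$ by subdomains $\Omega_{k} \Subset \Omega$, and solves $- \laplacian_{p, w} u_{k} = \mu_{k}$ with $u_{k} \in H_{0}^{1, p}(\Omega_{k}; w)$; on $\Omega_{k}$ the bounded superharmonic function $v_{k}$ does belong to $H^{1, p}(\Omega_{k}; w)$, so the standard comparison principle for weak solutions gives $u_{k} \le v_{k} \le v$ there, and also $u_{k} \le u_{k+1} \le u$. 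A uniform energy bound (testing with $u_{k}$ and using \eqref{eqn:finite_energy}) shows $u' := \lim_{k} u_{k} \in H_{0}^{1, p}(\Omega; w)$, and Theorem \ref{thm:TW} together with uniqueness of weak solutions identifies $u' = u$, whence $u \le v$. If you want to keep your direct approach you must supply the missing boundary cutoff lemma; otherwise I would recommend switching to the exhaustion argument.
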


\begin{proof}
For each $k \in \N$, set $v_{k} = \min\{ v, k \}$. Let $\nu_{k}$ be the Riesz measure of $v_{k}$.
By the chain rule of Sobolev functions,
\[
\Psi^{k}_{\epsilon}(v) 
:=
\frac{ (k - v)_{+} }{ (k - v)_{+} + \epsilon }
\in H^{1, p}_{\loc}(\Omega; w) \cap L^{\infty}(\Omega),
\quad
\forall \epsilon > 0.
\]
Thus, for any $l \ge k$,
\[
\begin{split}
\int_{\Omega} \varphi \, \Psi^{k}_{\epsilon}(v) \, d \nu_{l}
& =
\int_{\Omega} |\nabla v_{l}|^{p - 2} \nabla v_{l} \cdot \nabla ( \varphi \, \Psi^{k}_{\epsilon}(v) ) \, dw
\\
& \le
\int_{\Omega} |\nabla v_{l}|^{p - 2} \nabla v_{l} \cdot \nabla \varphi \, \Psi^{k}_{\epsilon}(v)  \, dw,
\quad
\forall \varphi \in C_{c}^{\infty}(\Omega), \ \varphi \ge 0.
\end{split}
\]
Passing to the limits $\epsilon \to 0$ and $l \to \infty$, we see that $\mathbf{1}_{ \{ v < k \} } \nu \le \nu_{k}$.
Let $\mu_{k} = \mathbf{1}_{ \{ v < k \} } \mu$,
and let $\{ \Omega_{k} \}_{k = 1}^{\infty}$ be a sequence of open sets 
such that $\bigcup_{k = 1}^{\infty} \Omega_{k} = \Omega$ and $\Omega_{k} \Subset \Omega_{k + 1}$ for all $k \ge 1$.
Since $\mu_{k} \le \mu$,
there exists a weak solution $u_{k} \in H_{0}^{1, p}(\Omega_{k}; w)$ to
$- \laplacian_{p, w} u_{k} = \mu_{k}$ in $\Omega_{k}$.
Let us denote by $u_{k}$ the zero extension of the lsc-regularization of $u_{k}$ again. 
By the comparison principle for weak solutions,
$u_{k}(x) \le v_{k}(x) \le v(x)$ for a.e. $x \in \Omega_{k}$.
Accordingly, $u_{k}(x) \le v(x)$ for all $x \in \Omega_{k}$ since $u$ and $v$ are $(p, w)$-superharmonic in $\Omega_{k}$.
Similarly, $u_{k} \le u_{k + 1} \le u$ in $\Omega_{k}$.
Let $u' = \lim_{k \to \infty} u_{k}$, and let $\mu'$ be the Riesz measure of $u'$.
Testing the equation of $u_{k}$ with $u_{k}$, we have
\[
\int_{\Omega} |\nabla u_{k}|^{p} \, d w
=
\int_{\Omega_{k}} u_{k} \, d \mu_{k}
\le
\int_{\Omega_{k}} u_{k} \, d \mu
\le
\| \nabla u \|_{L^{p}(\Omega; w)}^{p - 1}
\| \nabla u_{k} \|_{L^{p}(\Omega; w)}.
\]
Therefore $u' \in H_{0}^{1, p}(\Omega; w)$.
By Theorem \ref{thm:TW}, $\{ \mu_{k} \}_{k = 1}^{\infty}$ converges to $\mu'$ weakly.
Meanwhile, since $\mu( \{ v = \infty \} ) = 0$, $\{ \mu_{k} \}_{k = 1}^{\infty}$  converges to $\mu$ weakly.
Thus $\mu = \mu'$.
By the uniqueness of weak solutions, $u = u' \le v$ in $\Omega$.
\end{proof}

Following \cite[Chapter 2]{MR2778606},
we introduce classes of smooth measures. 

\begin{definition}
Let $\s_{0}$ be the set of all Radon measures satisfying \eqref{eqn:finite_energy}.
For $\mu \in \s_{0}$,
we denote the lsc-regularization of  the solution to \eqref{eqn:variational_problem} by $\w_{p, w}^{0} \mu$.
Furthermore, we define a subset $\s_{00}$ of $\s_{0}$ as
\[
\s_{00}
:=
\left\{
\mu \in \s_{0} \colon \sup_{\Omega} \w_{p, w}^{0} \mu < \infty \ \text{and} \ \mu(\Omega) < \infty
\right\}.
\]
\end{definition}

\begin{remark}\label{rem:wmp}
Assume that $\mu \in S_{0}$ is finite. Set $u = \w_{p, w}^{0} \mu$.
Then $\sup_{\Omega} u = \| u \|_{L^{\infty}(\mu)}$.
In fact, clearly $\sup_{\Omega} u \ge \| u \|_{L^{\infty}(\mu)}$.
To prove the converse inequality, set $k = \| u \|_{L^{\infty}(\mu)}$.
Then testing the equation of $u$ with $(u - k)_{+}$, we find that
\[
\int_{ \{ u > k \} } |\nabla u|^{p} \, dw
=
\int_{\Omega} (u - k)_{+} \, d \mu
=
0.
\]
Therefore $u(x) \le k$ for a.e. $x \in \Omega$.
Since $u$ is $(p, w)$-superharmonic in $\Omega$,
$u(x) \le k$ for all $x \in \Omega$. 
\end{remark}

\begin{remark}
If $\mu \in \s_{00}$ and $f \in L^{\infty}(\mu)$, then $|f| \mu \in \s_{00}$.
\end{remark}

\begin{lemma}\label{lem:additivity_s_00}
Let $\mu, \nu \in \s_{00}$.
Assume also that $\spt(\mu + \nu) \Subset \Omega$.
Then $\mu + \nu \in \s_{00}$.
\end{lemma}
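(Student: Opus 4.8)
Since \eqref{eqn:finite_energy} is linear in the measure we have $\mu + \nu \in \s_{0}$, and $(\mu + \nu)(\Omega) = \mu(\Omega) + \nu(\Omega) < \infty$; so $u := \w_{p, w}^{0}(\mu + \nu)$ is well-defined. It is a nonnegative $(p, w)$-superharmonic function in $\Omega$ (compare with the zero solution, as $\mu + \nu \ge 0$), lies in $H_{0}^{1, p}(\Omega; w)$, and its Riesz measure is $\mu + \nu$, which is supported in $K := \spt(\mu + \nu) \Subset \Omega$. The only thing to prove is $\sup_{\Omega} u < \infty$. The obstacle is that, because $-\laplacian_{p, w}$ is nonlinear, $u$ need not be comparable to $\w_{p, w}^{0}\mu + \w_{p, w}^{0}\nu$; the plan is therefore to bound $u$ pointwise in the interior through the Wolff potential estimate (Theorem \ref{thm:wolff}) and then to propagate this bound to all of $\Omega$ using the equation.

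First, note that $\spt\mu, \spt\nu \subseteq K$, and set $d_{0} := \dist(K, \R^{n} \setminus \Omega) > 0$ and $M_{\mu} := \sup_{\Omega} \w_{p, w}^{0}\mu$, $M_{\nu} := \sup_{\Omega} \w_{p, w}^{0}\nu$, which are finite by hypothesis. The elementary inequality $(a + b)^{1/(p - 1)} \le c(p)(a^{1/(p - 1)} + b^{1/(p - 1)})$ gives the quasi-subadditivity
\[
\W_{1, p, w}^{r}(\mu + \nu)(x) \le c(p)\left( \W_{1, p, w}^{r}\mu(x) + \W_{1, p, w}^{r}\nu(x) \right).
\]
Fix $R \in (0, d_{0}/4)$ and put $A := \{ x \in \Omega : B(x, 4R) \subset \Omega \}$, so that $K \subset A$. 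For $x \in A$, the lower bound in Theorem \ref{thm:wolff} applied to $\w_{p, w}^{0}\mu$ and $\w_{p, w}^{0}\nu$ (which are $(p,w)$-superharmonic in $\Omega$) yields $\W_{1, p, w}^{2R}\mu(x) \le C M_{\mu}$ and $\W_{1, p, w}^{2R}\nu(x) \le C M_{\nu}$, while the upper bound applied to $u$ gives, together with the quasi-subadditivity above,
\[
u(x) \le C\left( \inf_{B(x, R)} u + \W_{1, p, w}^{2R}(\mu + \nu)(x) \right) \le C \inf_{B(x, R)} u + C'(M_{\mu} + M_{\nu}).
\]
Since $u \in H_{0}^{1, p}(\Omega; w) \subset L^{1}(\Omega; w)$ (recall $w(\Omega) < \infty$), one has $\inf_{B(x, R)} u \le w(B(x, R))^{-1} \| u \|_{L^{1}(\Omega; w)}$, and $\inf_{x \in A} w(B(x, R)) > 0$ because $x \mapsto w(B(x, R))$ is continuous and strictly positive on the compact set $\cl{A}$. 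Hence $M := \sup_{A} u < \infty$.

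It remains to see that $u \le M$ on all of $\Omega$. The truncation $(u - M)_{+} = u - \min\{ u, M \}$ belongs to $H_{0}^{1, p}(\Omega; w)$ and vanishes on $K$, since $K \subset A$ and $u \le M$ on $A$. Using $(u - M)_{+}$ as a test function in the equation $-\laplacian_{p, w} u = \mu + \nu$ (admissible because $\mu + \nu$ is finite and satisfies \eqref{eqn:finite_energy}) gives
\[
\int_{\{ u > M \}} |\nabla u|^{p} \, dw = \int_{\Omega} |\nabla u|^{p - 2} \nabla u \cdot \nabla (u - M)_{+} \, dw = \int_{K} (u - M)_{+} \, d(\mu + \nu) = 0,
\]
so $\nabla (u - M)_{+} = 0$ a.e., and the Poincar\'{e} inequality forces $(u - M)_{+} \equiv 0$, i.e. $u \le M$ a.e.; since $u$ is $(p, w)$-superharmonic this gives $u \le M$ everywhere, whence $\sup_{\Omega} u \le M < \infty$ and $\mu + \nu \in \s_{00}$.

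I expect the crux to be the passage from the purely interior estimate of the second paragraph to a genuinely global bound: the Wolff potential bounds are silent near $\del\Omega$, and it is precisely the hypothesis $\spt(\mu + \nu) \Subset \Omega$ that makes $\mu + \nu$ compactly supported and allows the energy identity in the third paragraph to eliminate the boundary region.
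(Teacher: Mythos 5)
Your proof is correct and follows essentially the same route as the paper: quasi-subadditivity of the truncated Wolff potential together with the two-sided estimate of Theorem \ref{thm:wolff} to bound $u=\w_{p,w}^{0}(\mu+\nu)$ on a neighborhood of $\spt(\mu+\nu)$, and then the weak maximum principle relative to the measure (which you re-derive by testing with $(u-M)_{+}$; this is exactly Remark \ref{rem:wmp}) to propagate the bound to all of $\Omega$. The only substantive variation is your treatment of the term $\inf_{B(x,R)}u$, which you control by the $L^{1}(\Omega;w)$-average of $u\in H_{0}^{1,p}(\Omega;w)$ over the ball, whereas the paper tests the equation with $\min\{u,\inf_{B(x,R)}u\}$ and uses a capacity--Poincar\'{e} estimate; both yield a bound that is uniform over the compact support.
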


\begin{proof}
Clearly $\mu + \nu$ is finite and belongs to $\s_{0}$.
Let $u = \w_{p, w}^{0}(\mu + \nu)$.
By Remark \ref{rem:wmp}, $\sup_{\Omega} u = \| u \|_{L^{\infty}(\mu + \nu)}$.
Let $R = \dist(\spt (\mu + \nu), \del \Omega) / 4$ and fix $x \in \spt (\mu + \nu)$.
Then, by the latter inequality in Theorem \ref{thm:wolff}, we have
\[
u(x)
\le
C \left( \inf_{B(x, R)} u + \W_{1, p, w}^{2R} (\mu + \nu)(x)  \right).
\]
Using a simple calculation and the former inequality in Theorem \ref{thm:wolff}, we also obtain
\[
\begin{split}
\W_{1, p, w}^{2R} (\mu + \nu)(x)
& \le
\max\{ 2^{ \frac{2 - p}{p - 1}}, 1 \} \left( \W_{1, p, w}^{2R}\mu(x) + \W_{1, p, w}^{2R}\nu(x) \right)
\\
& \le
C \left( \w_{p, w}^{0} \mu(x) + \w_{p, w}^{0} \nu(x) \right).
\end{split}
\]
Furthermore, testing the equation of $u$ with $\min \{ u, \inf_{B(x, R)} u \}$
and using the Poincar\'{e} inequality, we find that
\[
\inf_{B(x, R)} u
\le
\left(
\frac{ (\mu + \nu)(\Omega) }{ \capacity_{p, w}( \cl{ B(x, R) }, \Omega) }
\right)^{\frac{1}{p - 1}}
\le
C
\left(
\diam(\Omega)^{p} \frac{ (\mu + \nu)(\Omega) }{ w(B(x, R)) }
\right)^{\frac{1}{p - 1}}.
\]
The right-hand side is continuous with respect to $x$.
Hence it is bounded on $\spt( \mu + \nu )$.
Combining these estimates, we obtain the desired boundedness.
\end{proof}

The following characterization for $\M^{+}_{0}(\Omega)$ is
the nonlinear counterpart of \cite[Theorem 2.2.4]{MR2778606} (see also \cite[Corollary 3.19]{MR0386032}).
For related characterizations for $\M_{0}^{+}(\Omega)$,
see also \cite{MR740203} and \cite[Proposition 1.2.7]{MR3676369}.

\begin{theorem}\label{thm:approximation}
Let $\mu \in \M^{+}(\Omega)$.
Then, $\mu \in \M^{+}_{0}(\Omega)$ if and only if
there exists an increasing sequence of compact sets $\{ F_{k } \}_{k = 1}^{\infty}$ such that
$\mu_{k} := \mathbf{1}_{F_{k}} \mu \in \s_{00}$ for all $k \ge 1$
and
$\mu\left( \Omega \setminus \bigcup_{k = 1}^{\infty} F_{k} \right) = 0$.
\end{theorem}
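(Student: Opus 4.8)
The plan is to establish the two directions separately, with the "only if" direction being the substantial one and the "if" direction following quickly from capacitary absolute continuity of each $\mu_k$.

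For the "if" direction, suppose such a sequence $\{F_k\}$ exists. If $E \subset \Omega$ has zero $(p,w)$-capacity, then $\mu_k(E) = 0$ for every $k$ because $\mu_k \in \s_{00} \subset \s_0$ and measures in $\s_0$ satisfy \eqref{eqn:finite_energy}, hence belong to $\M_0^+(\Omega)$ (this is recorded in the preliminaries: a finite measure satisfying \eqref{eqn:finite_energy} is the Riesz measure of a weak supersolution, which lies in $\M_0^+(\Omega)$). Since $\mathbf{1}_{F_k}\mu \uparrow \mathbf{1}_{\bigcup_k F_k}\mu$ and $\mu(\Omega \setminus \bigcup_k F_k) = 0$, we get $\mu(E) = \lim_k \mu_k(E) + \mu(E \setminus \bigcup_k F_k) = 0$. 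Thus $\mu \in \M_0^+(\Omega)$.

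For the "only if" direction, assume $\mu \in \M_0^+(\Omega)$. First I would exhaust $\Omega$ by an increasing sequence of compactly contained open sets $\Omega_j \Subset \Omega_{j+1}$ with $\bigcup_j \Omega_j = \Omega$; it suffices to produce, for each fixed $j$, an increasing sequence of compact sets inside $\Omega_j$ that carries all of $\mu\lfloor\Omega_j$ up to a capacity-null (hence $\mu$-null) set and on each of which the restriction lies in $\s_{00}$, and then diagonalize. So reduce to $\mu$ with $\spt\mu \Subset \Omega$ and $\mu$ finite. Now the key measure-theoretic input is the nonlinear analogue of the classical fact that a capacitary-absolutely-continuous measure can be decomposed using sets of finite energy: because $\mu$ does not charge sets of zero $(p,w)$-capacity, one can find an increasing sequence of compact sets $F_k$ with $\mu(\Omega \setminus \bigcup_k F_k) = 0$ and such that each $\mathbf{1}_{F_k}\mu$ has finite energy, i.e. lies in $\s_0$ — this is the content invoked from \cite{MR740203} / \cite[Proposition 1.2.7]{MR3676369}, or can be derived from the fact that $\mu$ restricted to a suitable large compact set is dominated by a measure in the dual of $H_0^{1,p}(\Omega;w)$. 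The remaining work is to pass from "finite energy" to membership in $\s_{00}$, i.e. to also arrange $\sup_\Omega \w_{p,w}^0(\mathbf{1}_{F_k}\mu) < \infty$. For this I would further refine each $F_k$: given $\mu_k = \mathbf{1}_{F_k}\mu \in \s_0$ finite, by Remark \ref{rem:wmp} we have $\sup_\Omega \w_{p,w}^0 \mu_k = \|\w_{p,w}^0 \mu_k\|_{L^\infty(\mu_k)}$, which is finite precisely when $\w_{p,w}^0\mu_k$ is essentially bounded $\mu_k$-a.e.; replacing $F_k$ by $F_k \cap \{\w_{p,w}^0 \mu_k \le k\}$ (a set which is, up to capacity zero, the superlevel set of a quasicontinuous function and hence can be taken compact after an inner approximation, using that $\mu \in \M_0^+$) truncates the potential on the relevant region, and one checks — using the comparison principle Theorem \ref{thm:comparison_principle} or directly testing the equation as in Remark \ref{rem:wmp} — that the potential of the truncated measure stays bounded. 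Finally, re-enumerate this doubly-indexed family (over $j$ and over the truncation levels) as a single increasing sequence of compact sets, noting that the excluded set is a countable union of capacity-null sets together with the $\mu$-null sets $\Omega \setminus \Omega_j$, hence $\mu$-null.

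The main obstacle I anticipate is the last refinement step: showing that after intersecting $F_k$ with a superlevel set of the potential $\w_{p,w}^0\mu_k$ one can still take the resulting set compact and that the new potential is genuinely bounded rather than merely $\mu$-a.e. bounded. This requires care because $\w_{p,w}^0$ is nonlinear — truncating the measure is not the same as truncating the potential — so the cleanest route is probably to argue as in Remark \ref{rem:wmp}: let $\mu_k' = \mathbf{1}_{\{\w_{p,w}^0\mu_k \le k\}}\mu_k$ and $u' = \w_{p,w}^0\mu_k'$; by the comparison principle $u' \le \w_{p,w}^0\mu_k$, so $u' \le k$ $\mu_k'$-a.e., and then the Remark \ref{rem:wmp} argument (test with $(u'-k)_+$) upgrades this to $u' \le k$ everywhere, giving $\mu_k' \in \s_{00}$. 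Compactness of $\{\w_{p,w}^0\mu_k \le k\} \cap F_k$ up to a capacity-null set follows from quasicontinuity of the $(p,w)$-superharmonic function $\w_{p,w}^0\mu_k$ together with inner regularity of $\mu \in \M_0^+(\Omega)$.
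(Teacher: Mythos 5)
Your overall strategy --- exhaust $\Omega$ by $\Omega_j \Subset \Omega$, restrict $\mu$ to sublevel sets of a potential, bound the potential of the restricted measure on its own support by comparison, and upgrade to a global bound via the weak maximum principle of Remark \ref{rem:wmp} --- is the same as the paper's, and your treatment of the ``if'' direction and of the refinement step (showing $\mathbf{1}_{\{\w_{p,w}^{0}\mu_k \le k\}}\mu_k \in \s_{00}$) is essentially correct. Incidentally, your worry about compactness there is a non-issue: $\{\w_{p,w}^{0}\mu_k \le k\}$ is a sublevel set of a lower semicontinuous function, hence relatively closed in $\Omega$, so its intersection with the compact set $F_k$ is already compact; no inner approximation or quasicontinuity argument is needed.

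There are, however, two genuine gaps. First, you outsource the existence of the finite-energy nest (compact sets $F_k$ with $\mathbf{1}_{F_k}\mu \in \s_0$ exhausting $\mu$) to \cite{MR740203} and \cite[Proposition 1.2.7]{MR3676369}, which the paper cites only as \emph{related} characterizations in the unweighted setting; establishing this decomposition for general $p$-admissible weights is precisely part of what the theorem is proving, so it cannot be assumed. Your fallback --- that $\mu$ restricted to a suitable large compact set is ``dominated by a measure in the dual of $H_0^{1,p}(\Omega;w)$'' --- is false in general: restriction to a compact set preserves finiteness but not finite energy. The paper instead constructs the nest directly: by Mikkonen's solvability theorem for finite measure data there is a $(p,w)$-superharmonic $u_j$ with $-\laplacian_{p,w}u_j = \mathbf{1}_{\cl{\Omega_j}}\mu$ and $\min\{u_j,k\} \in H_0^{1,p}(\Omega;w)$, and then $\mathbf{1}_{\{u_j\le k\}\cap\cl{\Omega_j}}\mu \le \mu[\min\{u_j,k+1\}]$, which simultaneously yields membership in $\s_0$ and, via Remark \ref{rem:wmp}, the bound $\sup_\Omega \w_{p,w}^{0}\mu_{j,k} \le k$. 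Second, your final ``re-enumerate the doubly-indexed family'' step hides a real difficulty: an increasing sequence covering all of $\Omega$ must at stage $k$ contain a finite union $\bigcup_{j\le k}(F_{k,j}\cap\cl{\Omega_j})$, and since $\w_{p,w}^{0}$ is nonlinear it is not automatic that a finite sum of measures in $\s_{00}$ is again in $\s_{00}$. The paper needs Lemma \ref{lem:additivity_s_00} (proved with the Wolff potential estimates of Theorem \ref{thm:wolff}, and requiring compact support in $\Omega$ --- which is why the $\cl{\Omega_j}$ are kept in the construction) exactly at this point; your proposal does not address it.
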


\begin{proof}
The ``if'' part is readily obtained from the definition of $\s_{0}$. Let us prove the ``only if'' part.
Consider a sequence of open sets $\{ \Omega_{j} \}_{j = 1}^{\infty}$ such that 
$\bigcup_{j = 1}^{\infty} \Omega_{j} = \Omega$ and $\Omega_{j} \Subset \Omega_{j + 1}$ for all $j \ge 1$.
For each $j \ge 1$, $\mathbf{1}_{ \cl{\Omega_{j}} } \mu$ is finite.
Thus, by \cite[Theorem 6.6]{MR1386213},
there exists a $(p, w)$-superharmonic function $u_{j}$ satisfying
\[
\begin{cases}
- \laplacian_{p, w} u_{j} = \mathbf{1}_{ \cl{\Omega_{j}} } \mu & \text{in $\Omega$,}
\\
\min\{ u_{j}, k \} \in H_{0}^{1, p}(\Omega; w) & \text{for all $k \ge 1$.}
\end{cases}
\]
Let $F_{k, j} = \{ u_{j} \le k \}$ and let $\mu_{j, k} = \mathbf{1}_{ F_{k, j} \cap \cl{\Omega_{j}} } \mu$.
Using the method in Theorem \ref{thm:comparison_principle},
we see that $\mu_{j, k} \le \mathbf{1}_{ \{ u _{j} < k + 1\} } \mathbf{1}_{ \cl{\Omega_{j}} } \mu \le \mu[ \min\{ u_{j}, k + 1 \} ]$.
Since $\min \{ u_{j}, k + 1 \} \in H_{0}^{1, p}(\Omega; w)$, this implies that $\mu_{j, k} \in \s_{0}$.
By Remark \ref{rem:wmp},
\[
\sup_{\Omega} \w_{p, w}^{0} \mu_{j, k}
=
\| \w_{p, w}^{0} \mu_{j, k} \|_{L^{\infty}( \mu_{j, k} )}
\le
\| \min \{ u_{j}, k + 1 \} \|_{L^{\infty}( \mu_{j, k} )}
\le
k.
\]
Hence  $\mu_{j, k} \in \s_{00}$.
Let
\[
F_{k} = \bigcup_{1 \le j \le k} \left( F_{k, j} \cap \cl{ \Omega_{j} } \right).
\]
Clearly, $F_{k} \subset F_{k + 1}$ and $F_{k} \subset \cl{ \Omega_{k} }$.
Fix $j \ge 1$. Since $\mu \in \M^{+}_{0}(\Omega)$, we have $\mu(\{ u_{j} = \infty \}) = 0$.
Therefore,
\[
\mathbf{1}_{ \cl{ \Omega_{j} } }
=
\lim_{k \to \infty} \mathbf{1}_{ F_{k, j} \cap \cl{ \Omega_{j} } }
\le
\lim_{k \to \infty} \mathbf{1}_{ F_{k} }
\le
\mathbf{1}_{\Omega}
\quad
\text{ $\mu$-a.e. }
\]
Passing to the limit $j \to \infty$, we see that $\lim_{k \to \infty} \mathbf{1}_{ F_{k} } = \mathbf{1}_{\Omega}$ $\mu$-a.e.
It remains to be shown that $\mathbf{1}_{F_{k}} \mu \in \s_{00}$.
By the comparison principle for weak solutions,
\[
\sup_{\Omega} \w_{p, w}^{0} \left( \mathbf{1}_{F_{k}} \mu \right)
\le
\sup_{\Omega} \w_{p, w}^{0} \left( \sum_{1\le j \le k} \mu_{j, k} \right).
\]
Iterating Lemma \ref{lem:additivity_s_00} $(k - 1)$ times, we find that the right-hand side is finite.
\end{proof}

Let us now consider the following Dirichlet problem:
\begin{equation}\label{eqn:poisson}
\begin{cases}
- \laplacian_{p, w} u = \mu & \text{in $\Omega$},
\\
u = 0 & \text{on $\del \Omega$.}
\end{cases}
\end{equation}
We say that a function $u$ is a \textit{$(p, w)$-superharmonic solution (supersolution)}
to $- \laplacian_{p, w} u = \mu$ in $\Omega$,
if $u$ is a $(p, w)$-superharmonic function in $\Omega$ and $\mu[u] = \mu$ ($\mu[u] \ge \mu$),
where $\mu[u]$ is the Riesz measure of $u$.
We say that a nontrivial nonnegative solution $u$ is \textit{minimal}
if $v \geq u$ in $\Omega$ whenever
$v$ is a nontrivial nonnegative supersolution to the same equation.

\begin{definition}\label{def:potential}
For $\mu \in \M^{+}_{0}(\Omega)$,
we define
\[
\w_{p, w} \mu(x)
:=
\sup \left\{ \w_{p, w}^{0} \nu(x) \colon \nu \in \s_{00} \ \text{and} \  \nu \le \mu \right\}.
\]
\end{definition}

\begin{remark}
Note that $u := \w_{p, w} \mu$ may be identically infinite.
If $\mu$ is finite, then $u$ is finite q.e. in $\Omega$ and satisfies \eqref{eqn:poisson}
in the sense of entropy or renormalized solutions
(see \cite{MR1205885, MR1409661,MR1402674, MR1386213, MR1760541}).
In general, even if $u \not \equiv \infty$ is a $p$-superharmonic solution to $- \laplacian_{p, w} u = \mu$ in $\Omega$,
it does not satisfy the Dirichlet boundary condition in the sense of renormalized solutions.
The author is not aware of the renowned name for this class of solutions.
Sufficient conditions for $u \not \equiv \infty$ will be discussed in the next section.
\end{remark}

\begin{remark}
Clearly, $\w_{p, w}( a \mu) = a^{\frac{1}{p - 1}} \w_{p, w} \mu$ for any constant $a \ge 0$, and
\[
\mu \le \nu
\Rightarrow
\w_{p, w} \mu(x) \le \w_{p, w} \nu(x),
\quad
\forall x \in \Omega.
\]
Furthermore, Theorem \ref{thm:comparison_principle} is still valid when $u = \w_{p, w} \mu$.
\end{remark}

\begin{proposition}\label{prop:potentials}
Let $\mu \in \M^{+}_{0}(\Omega)$.
The following statements hold.
\begin{enumerate}[label=(\roman*)]
\item\label{cond:01_potentials}
Assume that $\w_{p, w} \mu \not \equiv \infty$.
Then $u = \w_{p, w} \mu$ is the minimal nonnegative $(p, w)$-superharmonic solution to
$- \laplacian_{p, w} u = \mu$ in $\Omega$.
\item\label{cond:02_potentials}
Let $\{ f_{j} \}_{j = 1}^{\infty} \subset L^{1}_{\loc}(\mu)$ be a nondecreasing sequence of functions.
Assume that $f_{j} \uparrow f$ $\mu$-a.e.
Let $u_{j} = \w_{p, w} ( f_{j} \mu)$ and let $u = \lim_{j \to \infty} u_{j}$.
Assume also that $u \not \equiv \infty$.
Then $u = \w_{p, w} ( f \mu)$.
\item\label{cond:03_potentials}
If $\mu \in \s_{0}$, then $\w_{p, w} \mu =  \w_{p, w}^{0} \mu$.
\end{enumerate}
\end{proposition}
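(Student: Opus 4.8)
The plan is to prove the three parts of Proposition~\ref{prop:potentials} in order, using the weak continuity theorem (Theorem~\ref{thm:TW}) and the comparison principle (Theorem~\ref{thm:comparison_principle}) as the main tools, together with the approximation result (Theorem~\ref{thm:approximation}).

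\textbf{Part \ref{cond:01_potentials}.}
First I would write $u = \w_{p, w}\mu$ as an increasing limit of potentials of $\s_{00}$-measures. By Theorem~\ref{thm:approximation} there is an increasing sequence of compact sets $F_k$ with $\mu_k := \mathbf{1}_{F_k}\mu \in \s_{00}$ and $\mu(\Omega \setminus \bigcup_k F_k) = 0$. Since each $\w_{p,w}^0\mu_k \le \w_{p,w}\mu$ and, conversely, any $\nu \in \s_{00}$ with $\nu \le \mu$ is dominated (by the comparison principle for weak solutions, after truncation) by $\w_{p,w}^0 \mu_k$ for $k$ large in the sense that $\sup_\Omega \w_{p,w}^0\nu \le \lim_k \sup_\Omega \w_{p,w}^0\mu_k$, one gets $\w_{p,w}\mu = \lim_k \w_{p,w}^0\mu_k$ pointwise (using also that $\mu_k \uparrow \mu$). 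Here one should be slightly careful: what is literally needed is $\sup_\nu \w_{p,w}^0\nu(x) = \lim_k \w_{p,w}^0\mu_k(x)$ for each $x$, which follows because for any admissible $\nu$ one has $\nu \wedge \mu_k \uparrow \nu$ and $\w_{p,w}^0(\nu\wedge\mu_k) \le \w_{p,w}^0\mu_k$, combined with the weak continuity / Harnack convergence theorem giving $\w_{p,w}^0(\nu\wedge\mu_k) \uparrow \w_{p,w}^0\nu$. Granting $u \not\equiv \infty$, the Harnack-type convergence theorem stated after Theorem~\ref{thm:TW} shows $u$ is $(p,w)$-superharmonic and $\mu[\w_{p,w}^0\mu_k] = \mu_k \to \mu[u]$ weakly; since also $\mu_k \to \mu$ weakly (monotone convergence, using $\mu(\Omega\setminus\bigcup F_k)=0$), we get $\mu[u] = \mu$, so $u$ is a solution. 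For minimality, let $v$ be any nontrivial nonnegative $(p,w)$-superharmonic supersolution, i.e. $\mu[v] \ge \mu \ge \mu_k$. Each $\w_{p,w}^0\mu_k$ lies in $H_0^{1,p}(\Omega;w)$ (it solves a variational problem with $\s_0$-datum), so Theorem~\ref{thm:comparison_principle} gives $\w_{p,w}^0\mu_k \le v$ in $\Omega$; letting $k\to\infty$ yields $u \le v$.

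\textbf{Part \ref{cond:02_potentials}.}
Set $\nu_j = f_j\mu$ and $\nu = f\mu$; these lie in $\M_0^+(\Omega)$ since $\mu$ does. The sequence $u_j = \w_{p,w}\nu_j$ is nondecreasing by the monotonicity of $\w_{p,w}$ recorded in the Remark before the proposition, so $u = \lim_j u_j$ is well-defined and, by hypothesis, $u \not\equiv\infty$; hence $\w_{p,w}\nu \not\equiv\infty$ too, as $u_j \le \w_{p,w}\nu$. By the Harnack-type convergence theorem $u$ is $(p,w)$-superharmonic with $\mu[u] = \lim_j \mu[u_j] = \lim_j \nu_j = \nu$ weakly (the last by monotone convergence of $\nu_j \uparrow \nu$). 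So $u$ is a $(p,w)$-superharmonic solution to $-\laplacian_{p,w}u = \nu$. It remains to identify $u$ with the \emph{minimal} such solution, i.e. with $\w_{p,w}\nu$ by part~\ref{cond:01_potentials}. One inequality, $u \le \w_{p,w}\nu$, is immediate. For the reverse: $\w_{p,w}\nu$ is itself a (nontrivial, nonnegative) supersolution to $-\laplacian_{p,w}(\cdot) = \nu_j$ for every $j$ (since $\nu \ge \nu_j$), and by part~\ref{cond:01_potentials} $u_j = \w_{p,w}\nu_j$ is the minimal such, so $u_j \le \w_{p,w}\nu$; letting $j \to\infty$ gives $u \le \w_{p,w}\nu$, and combined with minimality of $\w_{p,w}\nu$ applied to the supersolution $u$ we get equality.

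\textbf{Part \ref{cond:03_potentials}.}
If $\mu \in \s_0$ is moreover finite, this follows from the approximation argument of part~\ref{cond:01_potentials} together with stability of the variational problem. In general $\mu \in \s_0$ need not be finite, so I would argue: the inequality $\w_{p,w}\mu \le \w_{p,w}^0\mu$ holds because, for any $\nu \in \s_{00}$ with $\nu \le \mu$, the comparison principle for weak solutions gives $\w_{p,w}^0\nu \le \w_{p,w}^0\mu$ (both solve variational problems with comparable $\s_0$-data), and taking the sup over $\nu$ gives the claim; in particular $\w_{p,w}\mu \not\equiv\infty$. Conversely, with $\mu_k = \mathbf{1}_{F_k}\mu \uparrow \mu$ as above, $\w_{p,w}^0\mu_k \le \w_{p,w}\mu$; and since $\w_{p,w}^0\mu \in H_0^{1,p}(\Omega;w)$ and the $\w_{p,w}^0\mu_k$ converge in $H_0^{1,p}(\Omega;w)$ to the solution with datum $\mu$ (a standard energy/monotonicity estimate as in the proof of Theorem~\ref{thm:comparison_principle}, using $\int|\nabla \w_{p,w}^0\mu_k|^p\,dw = \int \w_{p,w}^0\mu_k\,d\mu_k \le \|\nabla\w_{p,w}^0\mu\|^{p-1}\|\nabla\w_{p,w}^0\mu_k\|$ so the limit is $\w_{p,w}^0\mu$ by uniqueness), we obtain $\w_{p,w}^0\mu \le \w_{p,w}\mu$ after passing to lsc-regularizations.

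\textbf{Main obstacle.}
The delicate point throughout is the passage between the pointwise supremum defining $\w_{p,w}\mu$ and the increasing limit $\lim_k \w_{p,w}^0\mu_k$ along the \emph{particular} exhausting sequence $\mu_k = \mathbf{1}_{F_k}\mu$ from Theorem~\ref{thm:approximation}. One must show that an \emph{arbitrary} admissible $\nu \in \s_{00}$, $\nu \le \mu$, does not overshoot this limit at any point $x$; the cleanest route is $\nu \wedge \mu_k \uparrow \nu$ with $\w_{p,w}^0(\nu\wedge\mu_k) \le \w_{p,w}^0\mu_k$, then apply the Harnack-type convergence theorem to the left side — but one should double-check that $\w_{p,w}^0$ is itself continuous under such increasing limits of $\s_0$-measures, which again reduces to the weak continuity theorem plus the uniqueness of weak solutions, and to the fact that the relevant energies stay bounded. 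Everything else is a fairly mechanical combination of monotonicity, the comparison principle, and weak continuity.
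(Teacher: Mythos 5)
Your proposal is correct and follows essentially the same route as the paper: exhaust $\mu$ by the $\s_{00}$-measures $\mu_{k}=\mathbf{1}_{F_{k}}\mu$ from Theorem \ref{thm:approximation}, identify the increasing limit $u'=\lim_{k}\w_{p,w}^{0}\mu_{k}$ as a $(p,w)$-superharmonic solution via the Harnack-type convergence theorem, and use Theorem \ref{thm:comparison_principle} both to show $u'=\w_{p,w}\mu$ and to obtain minimality. Two remarks. First, the ``main obstacle'' you describe in part (i) dissolves: apply Theorem \ref{thm:comparison_principle} directly to $\w_{p,w}^{0}\nu\in H_{0}^{1,p}(\Omega;w)$ against $u'$, whose Riesz measure is $\mu\ge\nu$; this gives $\w_{p,w}^{0}\nu\le u'$ pointwise for every admissible $\nu$ at once, so the $\nu\wedge\mu_{k}$ detour and the attendant convergence check for $\w_{p,w}^{0}(\nu\wedge\mu_{k})$ are unnecessary --- this is exactly what the paper does, and exactly what you yourself do in the minimality step. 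Second, in part (ii) the justification ``$\w_{p,w}\nu\not\equiv\infty$ too, as $u_{j}\le\w_{p,w}\nu$'' is a non sequitur: a lower bound cannot give finiteness. What you need, before invoking the minimality statement of part (i) for $f\mu$, is the upper bound $\w_{p,w}(f\mu)\le u$, which again follows from Theorem \ref{thm:comparison_principle} applied to each $\w_{p,w}^{0}\nu'$ with $\nu'\in\s_{00}$, $\nu'\le f\mu$, against $u$, whose Riesz measure you have already identified as $f\mu$. With that one-line repair the argument closes; part (iii) is fine as written and matches the paper's energy-bound argument.
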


\begin{proof}
\ref{cond:01_potentials}
Let
\[
u'(x) = \lim_{k \to \infty} \w_{p, w}^{0} \mu_{k}(x),
\]
where $\{ \mu_{k} \}_{k = 1}^{\infty}$ is a sequence of Radon measures in Theorem \ref{thm:approximation}.
Since $u' \le u \not \equiv \infty$, it follows from Theorem \ref{thm:TW} that $u'$ is
a nonnegative $(p, w)$-superharmonic solution to $- \laplacian_{p, w} u = \mu$ in $\Omega$.
By Theorem \ref{thm:comparison_principle}, if $\nu \le \mu$ and $\nu \in \s_{00}$, then $\w_{p, w}^{0} \nu \le u'$ in $\Omega$; therefore $u = u'$.
Using the same argument again, we see that $u$ is minimal.
\ref{cond:02_potentials}
Let $\omega$ be the Riesz measure of $u$.
By Theorem \ref{thm:TW}, $f_{k} \mu$ converges to $\omega$ weakly.
By the monotone convergence theorem, $\omega = f \mu$ and $f \mu \in \M_{0}^{+}(\Omega)$.
For each $j \ge 1$, $u_{j} \le \w_{p, w} (f \mu)$, and hence $u \le \w_{p, w} (f \mu)$.
By the minimality of $\w_{p, w}(f \mu)$, $u = \w_{p, w}(f \mu)$.
\ref{cond:03_potentials}
Set $u_{k} = \w_{p, w}^{0} \mu_{k}$.
Since $\mu_{k} \in \s_{00}$ and $\mu \in \s_{0}$, we have
\[
\int_{\Omega} |\nabla u_{k}|^{p} \, dw
=
\int_{\Omega} u_{k} \, d \mu_{k}
\le
\int_{\Omega} u_{k} \, d \mu
\le
C \| \nabla u_{k} \|_{ L^{p}(\Omega; w) }.
\]
Passing to the limit $k \to \infty$, we see that $u = \lim_{k \to \infty} u$ belongs to $H_{0}^{1, p}(\Omega; w)$.
From Theorem \ref{thm:TW} and the uniqueness of weak solutions, the assertion follows.
\end{proof}

\begin{lemma}\label{lem:wmp}
Let $\mu \in \M^{+}_{0}(\Omega)$, and let $u = \w_{p, w} \mu$.
Then, $\sup_{\Omega} u = \| u \|_{L^{\infty}(\mu)}$.
\end{lemma}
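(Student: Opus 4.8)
The plan is to deduce the identity from Remark~\ref{rem:wmp}, which establishes it for finite measures in $\s_{0}$, by exploiting that $\w_{p, w}\mu$ is, by Definition~\ref{def:potential}, a pointwise supremum of potentials $\w_{p, w}^{0}\nu$ with $\nu \in \s_{00}$. The inequality $\|u\|_{L^{\infty}(\mu)} \le \sup_{\Omega} u$ is automatic, since an essential supremum never exceeds the pointwise supremum, so the whole content is the reverse inequality $\sup_{\Omega} u \le \|u\|_{L^{\infty}(\mu)}$.

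First I would dispose of the degenerate cases: if $\mu = 0$ then $u \equiv 0$ and the identity is trivial, and if $u \equiv \infty$ then $\mu \ne 0$ and both sides equal $\infty$; hence one may assume $u \not\equiv \infty$, so that $u$ is $(p, w)$-superharmonic by Proposition~\ref{prop:potentials}\ref{cond:01_potentials}; in particular $u$ is Borel measurable, with $\{u = \infty\}$ of zero $(p, w)$-capacity and hence $\mu$-null since $\mu \in \M^{+}_{0}(\Omega)$. One may also assume $k := \|u\|_{L^{\infty}(\mu)} < \infty$, for otherwise there is nothing to prove; then $\mu(\{u > k\}) = 0$. Now fix an arbitrary $\nu \in \s_{00}$ with $\nu \le \mu$ and put $v := \w_{p, w}^{0}\nu$. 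By the definition of $\w_{p, w}\mu$ one has $v \le u$ everywhere in $\Omega$, hence $\{v > k\} \subseteq \{u > k\}$, and since $\nu \le \mu$ this yields $\nu(\{v > k\}) \le \mu(\{u > k\}) = 0$, i.e. $\|v\|_{L^{\infty}(\nu)} \le k$. As $\nu$ is a finite measure in $\s_{0}$, Remark~\ref{rem:wmp} applies to $v$ and gives $\sup_{\Omega} v = \|v\|_{L^{\infty}(\nu)} \le k$. Taking the supremum over all such $\nu$ gives $u = \w_{p, w}\mu \le k$ throughout $\Omega$, which is exactly $\sup_{\Omega} u \le \|u\|_{L^{\infty}(\mu)}$.

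I do not anticipate a genuine obstacle: the argument is little more than the observation that the identity of Remark~\ref{rem:wmp} is inherited by the supremum defining $\w_{p, w}\mu$. The only points calling for mild care are the reduction to $u \not\equiv \infty$ (needed so that $u$ is a Borel function and $\mu(\{u = \infty\}) = 0$) and the elementary monotonicity of essential suprema under $\nu \le \mu$, together with the pointwise bound $\w_{p, w}^{0}\nu \le \w_{p, w}\mu$ coming straight from Definition~\ref{def:potential}; all of these are routine.
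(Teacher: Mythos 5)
Your proof is correct and takes essentially the same route as the paper: both reduce the identity to Remark~\ref{rem:wmp} for finite measures in $\s_{0}$ via the monotonicities $\w_{p,w}^{0}\nu \le \w_{p,w}\mu$ and $\nu \le \mu$. The only cosmetic difference is that the paper runs the argument along the canonical increasing sequence $\mu_{k} = \mathbf{1}_{F_{k}}\mu$ from Theorem~\ref{thm:approximation}, writing $u = \lim_{k}\w_{p,w}^{0}\mu_{k}$, whereas you take the supremum over all $\nu \in \s_{00}$ with $\nu \le \mu$ directly from Definition~\ref{def:potential}.
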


\begin{proof}
Take $\{ \mu_{k} \}_{k = 1}^{\infty}$ by using Theorem \ref{thm:approximation}.
Set $u_{k} = \w_{p, w}^{0} \mu_{k}$.
By Remark \ref{rem:wmp}, for any $x \in \Omega$,
\[
u(x)
=
\lim_{k \to \infty} u_{k}(x)
\le
\lim_{k \to \infty} \sup_{\Omega} u_{k}
=
\lim_{k \to \infty} \| u_{k} \|_{L^{\infty}(\mu_{k})}
\le
\| u \|_{L^{\infty}(\mu)}.
\]
The converse inequality is clear.
\end{proof}

\section{Generalized energy}\label{sec:generalized_energy}


Let us define generalized $p$-energy by using minimal $p$-superharmonic solutions.
For $p = 2$ or $\Omega = \R^{n}$, we refer to \cite{MR3881877,HARA2020111847}.

\begin{definition}\label{def:generalized_energy}
For $0 \le \gamma < \infty$, set
$\s^{\gamma} := \left\{ \mu \in \M^{+}_{0}(\Omega) \colon \e_{\gamma}( \mu ) < \infty \right\}$,
where
\[
\e_{\gamma}(\mu)
:=
\int_{\Omega} (\w_{p, w} \mu)^{\gamma} \, d \mu.
\]
Furthermore, let 
$
\s^{\infty}
:=
\left\{
\mu \in \M^{+}_{0}(\Omega) \colon \| \w_{p, w} \mu \|_{L^{\infty}(\mu)} < \infty
\right\}
$
for $\gamma = \infty$.
\end{definition}

By definition, if $\mu \in \s^{\gamma}$, then the Dirichlet problem \eqref{eqn:poisson} has
a minimal nonnegative $(p, w)$-superharmonic solution $u = \w_{p, w} \mu$.
We can verify $\s^{1} = \s_{0}$ by modifying the proof of Theorem \ref{thm:embedding} below.
For any $0 \le \gamma \le \infty$,
\[
\s_{00} = \s^{0} \cap \s^{\infty} \subset \s^{\gamma} \subset \M^{+}_{0}(\Omega).
\]
Assume that $0 < \gamma < \infty$ and $\mu \in \s_{00}$.
Let $u = \w_{p, w}^{0} \mu$ and let $v = u^{\frac{p - 1 + \gamma}{p}}$.
Then, for any $\epsilon > 0$,
\[
\int_{\Omega} (u^{\gamma} - \epsilon)_{+} \, d \mu
=
\gamma \int_{ \{ u^{\gamma} > \epsilon \} } |\nabla u|^{p} u^{\gamma - 1} \, dw
=
\frac{1}{c_{E}} \int_{ \Omega } |\nabla v_{\epsilon}|^{p} \, dw,
\]
where $v_{\epsilon} = (v - \epsilon^{ \frac{p - 1 + \gamma}{\gamma p} })_{+}$ and
$c_{E}$ is the constant in \eqref{eqn:opt-const}.
Thus, by the monotone convergence theorem, 
\begin{equation}\label{eqn:integrating_by_parts}
\int_{\Omega} (\w_{p, w}^{0} \mu)^{\gamma} \, d \mu
=
\gamma \int_{\Omega} |\nabla u|^{p} u^{\gamma - 1} \, dw
=
\frac{1}{c_{E}} \int_{ \Omega } |\nabla v|^{p} \, dw.
\end{equation}
Moreover, since $\{ v_{\epsilon} \}$ is a bounded sequence in $H_{0}^{1, p}(\Omega; w)$,
$v \in H_{0}^{1, p}(\Omega; w)$.

\begin{proposition}\label{prop:energy_class}
Let $\mu \in \M^{+}_{0}(\Omega)$, and let $u = \w_{p, w} \mu$.
\begin{enumerate}[label=(\roman*)]
\item\label{statement:02@energy_class}
If $\mu \in \s^{\gamma}$ with $0 \le \gamma \le 1$,
then $\min \{ u, l \} \in H_{0}^{1, p}(\Omega; w)$ for all $l > 0$.
\item\label{statement:03@energy_class}
If $\mu \in \s^{\gamma}$ with $1 \le \gamma \le \infty$,
then $u$ belongs to $H^{1, p}_{\loc}(\Omega; w)$ and
satisfies $- \laplacian_{p, w} u = \mu$ in the sense of weak solutions.
\item\label{statement:04@energy_class}
Assume that $\mu \in \s^{\gamma_{0}} \cap \s^{\gamma_{1}}$ with
$0 \le \gamma_{0} \le 1$ and $1 \le \gamma_{1} \le \infty$.
Then $u = \w_{p, w}^{0} \mu \in  H_{0}^{1, p}(\Omega; w)$.
In particular, $u$ satisfies \eqref{eqn:poisson} in the sense of finite energy weak solutions.
\end{enumerate}
\end{proposition}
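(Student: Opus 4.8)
The plan is to reduce all three parts to the case $\mu \in \s_{00}$ via the monotone approximation of Theorem~\ref{thm:approximation} and then to exploit the integration-by-parts identity \eqref{eqn:integrating_by_parts}. We may assume $\mu \neq 0$, since otherwise $u = \w_{p,w}\mu \equiv 0$ and every assertion is trivial. Choose an increasing sequence of compact sets $\{F_{k}\}_{k=1}^{\infty}$ as in Theorem~\ref{thm:approximation}, put $\mu_{k} = \mathbf{1}_{F_{k}}\mu \in \s_{00}$ and $u_{k} = \w_{p,w}^{0}\mu_{k}$. Then $\mu_{k} \uparrow \mu$, the functions $u_{k}$ are nondecreasing by the comparison principle for weak solutions, and $u_{k} \uparrow u := \w_{p,w}\mu$ exactly as in the proof of Proposition~\ref{prop:potentials}\ref{cond:01_potentials}; here $u \not\equiv \infty$ because for $0 < \gamma < \infty$ this is forced by $\e_{\gamma}(\mu) < \infty$ together with $\mu \neq 0$, for $\gamma = 0$ by finiteness of $\mu$, and for $\gamma = \infty$ by Lemma~\ref{lem:wmp}. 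For every $s \in (0,\infty)$, applying \eqref{eqn:integrating_by_parts} to $\mu_{k}$ and using $u_{k} \le u$, $\mu_{k} \le \mu$ gives
\[
s \int_{\Omega} |\nabla u_{k}|^{p}\, u_{k}^{s-1}\, dw
=
\int_{\Omega} (\w_{p,w}^{0}\mu_{k})^{s}\, d\mu_{k}
\le
\int_{\Omega} (\w_{p,w}\mu)^{s}\, d\mu
=
\e_{s}(\mu),
\]
and testing the equation of $u_{k}$ with $u_{k}$ itself gives $\|\nabla u_{k}\|_{L^{p}(w)}^{p} = \int_{\Omega} u_{k}\, d\mu_{k}$.

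For \ref{statement:02@energy_class}, fix $l > 0$. Since $\gamma - 1 \le 0$, on $\{u_{k} < l\}$ one has $u_{k}^{\gamma-1} \ge l^{\gamma-1}$, so (when $\gamma > 0$) the displayed identity with $s = \gamma$ gives
\[
\|\nabla \min\{u_{k}, l\}\|_{L^{p}(w)}^{p}
=
\int_{\{u_{k} < l\}} |\nabla u_{k}|^{p}\, dw
\le
\frac{l^{1-\gamma}}{\gamma}\, \e_{\gamma}(\mu),
\]
while for $\gamma = 0$, testing the equation of $u_{k}$ with $\min\{u_{k}, l\}$ bounds the same quantity by $l\,\mu(\Omega)$. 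In either case $\{\min\{u_{k}, l\}\}_{k}$ is a bounded sequence in $H_{0}^{1,p}(\Omega; w)$ converging in $L^{p}(w)$ to $\min\{u, l\}$, whence $\min\{u, l\} \in H_{0}^{1,p}(\Omega; w)$.

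For \ref{statement:03@energy_class}, if $\gamma = \infty$ then $u$ is bounded by Lemma~\ref{lem:wmp}, hence $u \in H^{1,p}_{\loc}(\Omega; w)$. If $1 \le \gamma < \infty$, set $v_{k} = u_{k}^{(p-1+\gamma)/p}$; by \eqref{eqn:integrating_by_parts} with $s = \gamma$ we get $\|\nabla v_{k}\|_{L^{p}(w)}^{p} = c_{E}\int_{\Omega}(\w_{p,w}^{0}\mu_{k})^{\gamma}\, d\mu_{k} \le c_{E}\,\e_{\gamma}(\mu)$ uniformly in $k$, and since $v_{k} \uparrow v := u^{(p-1+\gamma)/p}$ we obtain $v \in H_{0}^{1,p}(\Omega; w)$. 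For $\gamma = 1$ this already says $u = v \in H_{0}^{1,p}(\Omega; w)$. For $\gamma > 1$, write $\alpha = p/(p-1+\gamma) \in (0,1)$, so $u = v^{\alpha}$; the function $u$ is nontrivial (its Riesz measure is $\mu \neq 0$), so the minimum principle for nonnegative $(p,w)$-superharmonic functions gives $u > 0$ throughout $\Omega$, hence $v \ge m_{D} > 0$ on every $D \Subset \Omega$, on which $t \mapsto t^{\alpha}$ is Lipschitz, and the chain rule yields $|\nabla u| \le \alpha\, m_{D}^{\alpha-1}\, |\nabla v|$ a.e.\ on $D$; thus $u \in H^{1,p}_{\loc}(\Omega; w)$. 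In all cases $u$ is $(p,w)$-superharmonic, lies in $H^{1,p}_{\loc}(\Omega; w)$, and has Riesz measure $\mu$ (Proposition~\ref{prop:potentials}\ref{cond:01_potentials}), so $-\laplacian_{p,w}u = \mu$ in the weak sense. For \ref{statement:04@energy_class}, split $\|\nabla u_{k}\|_{L^{p}(w)}^{p} = \int_{\Omega} u_{k}\, d\mu_{k}$ according to whether $u_{k} < 1$ or $u_{k} \ge 1$ and use $u_{k} \le u_{k}^{\gamma_{0}}$ on the first set and $u_{k} \le u_{k}^{\gamma_{1}}$ on the second (with the obvious modifications when $\gamma_{0} = 0$ or $\gamma_{1} = \infty$, where one invokes $\mu(\{u \ge 1\}) \le \e_{\gamma_{0}}(\mu)$ and Lemma~\ref{lem:wmp}); this yields $\sup_{k}\|\nabla u_{k}\|_{L^{p}(w)}^{p} \le \e_{\gamma_{0}}(\mu) + \e_{\gamma_{1}}(\mu) < \infty$, which forces $u \in H_{0}^{1,p}(\Omega; w)$. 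Combined with part \ref{statement:03@energy_class} applied to $\gamma = \gamma_{1}$, $u$ solves \eqref{eqn:poisson} in the finite-energy weak sense, so $\mu \in \s_{0}$ and $u = \w_{p,w}^{0}\mu$ by Proposition~\ref{prop:potentials}\ref{cond:03_potentials}.

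The step I expect to be the main obstacle is \ref{statement:03@energy_class} in the range $1 < \gamma < \infty$: the quantity controlled by \eqref{eqn:integrating_by_parts} is $\int_{\Omega} |\nabla u_{k}|^{p}\, u_{k}^{\gamma-1}\, dw$, whose weight degenerates where $u_{k}$ is small, so it does not by itself give a local $H^{1,p}$ bound. The device that resolves this is to pass first to the power $v = u^{(p-1+\gamma)/p}$, which lies in $H_{0}^{1,p}(\Omega; w)$ with no lower bound on $u$ required, and only then to use strict positivity of $u$ on compact subsets --- a purely qualitative property of nontrivial nonnegative $(p,w)$-superharmonic functions --- to transfer regularity back to $u$. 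One could alternatively observe that $\s^{\gamma_{0}} \cap \s^{\gamma_{1}} \subset \s^{1} = \s_{0}$ whenever $\gamma_{0} \le 1 \le \gamma_{1}$, by the same splitting-at-$1$ estimate, and then read off \ref{statement:04@energy_class} from Proposition~\ref{prop:potentials}\ref{cond:03_potentials}.
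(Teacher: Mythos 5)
Your proof is correct and follows essentially the same route as the paper: approximation by $\s_{00}$ measures via Theorem~\ref{thm:approximation}, the energy identity \eqref{eqn:integrating_by_parts}, and the strong minimum principle, with part \ref{statement:04@energy_class} reduced to the inclusion $\s^{\gamma_{0}} \cap \s^{\gamma_{1}} \subset \s^{1} = \s_{0}$. The only cosmetic difference is in part \ref{statement:03@energy_class} for $1 < \gamma < \infty$, where you first show $v = u^{(p-1+\gamma)/p} \in H_{0}^{1,p}(\Omega; w)$ and transfer local regularity back to $u$ by a Lipschitz chain rule on compact subsets where $u$ is bounded below, whereas the paper bounds $\| u_{k} \|_{H^{1,p}(B; w)}$ directly using $\inf_{\cl{B}} u_{k_{0}} > 0$ together with the Poincar\'{e} inequality; the two devices are interchangeable.
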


\begin{proof}
Let $\{ \mu_{k} \}_{k = 1}^{\infty}$ be a sequence of Radon measures in Theorem \ref{thm:approximation},
and let $u_{k} = \w_{p, w}^{0} \mu_{k}$.
\ref{statement:02@energy_class}
Testing the equation of $u_{k}$ with $\min \{ u_{k}, l \} \in H_{0}^{1, p}(\Omega; w)$, we get
\begin{equation}\label{eqn:grad_esti}
\int_{\Omega} |\nabla \min \{ u_{k}, l \}|^{p} dw
=
\int_{\Omega} \min \{ u_{k}, l \} \, d \mu_{k}
\le
l^{1 - \gamma}
\int_{\Omega} (\w_{p, w} \mu)^{\gamma} \, d \mu.
\end{equation}
Letting $k \to \infty$ gives the desired boundary condition.
\ref{statement:03@energy_class}
Assume that $1 \le \gamma < \infty$.
Take a ball $B$ such that $B \Subset \Omega$.
Without loss of generality, we may assume that $u_{k_{0}} \neq 0$ for some $k_{0} \ge 1$.
By the strong minimum principle, $\inf_{ \cl{B} } u_{k_{0}} > 0$.
Let $k \geq k_{0}$.
Subsequently, \eqref{eqn:integrating_by_parts} gives
\[
\begin{split}
\int_{B} |\nabla u_{k}|^{p} \, dw
& \le
\frac{1}{\gamma} \left( \inf_{ \cl{B} } u_{k} \right)^{1 - \gamma}
\int_{\Omega} (\w_{p, w} \mu_{k})^{\gamma} \, d \mu_{k}
\\
& \le
\frac{1}{\gamma} \left( \inf_{ \cl{B} } u_{k_{0}} \right)^{1 - \gamma}
\int_{\Omega} (\w_{p, w} \mu)^{\gamma} \, d \mu.
\end{split}
\]
On the other hand, by the Poincar\'{e} inequality,
\[
\begin{split}
\left( \int_{B} u_{k}^{p} \, d w \right)^{\frac{p - 1 + \gamma}{p}}
& \le
w(B)^{ \frac{\gamma - 1}{p} }
\int_{\Omega} u_{k}^{p - 1 + \gamma} \, d w
\le
C \int_{\Omega} |\nabla u_{k}^{ \frac{p - 1 + \gamma}{p} }|^{p} \, d w
\\
& =
c_{E} C \int_{\Omega} (\w_{p, w} \mu_{k})^{\gamma} \, d \mu_{k}
\le
c_{E} C \int_{\Omega} (\w_{p, w} \mu)^{\gamma} \, d \mu.
\end{split}
\]
Hence, $\| u_{k} \|_{H^{1, p}(B; w)}$ is bounded.
Passing to the limit $k \to \infty$, we see that $u \in H^{1, p}(B; w)$.
Thus, $u \in H^{1, p}_{\loc}(\Omega; w)$.
If $\mu \in \s^{\infty}$, then $u$ is a bounded $(p, w)$-superharmonic function.
Therefore, $u \in H^{1, p}_{\loc}(\Omega; w)$.
\ref{statement:04@energy_class}
By H\"{o}lder's inequality, $\s^{\gamma_{0}} \cap \s^{\gamma_{1}} \subset \s^{1} = \s_{0}$.
Hence, $u = \w_{p, w} \mu \in H_{0}^{1, p}(\Omega; w)$.
\end{proof}

As in the proof of \cite[Lemma 3.3]{HARA2020111847},
the Picone-type inequality in \cite{MR1618334,MR3273896} yields the following estimate.

\begin{lemma}\label{lem:trace_estimate}
Let $1 < p < \infty$, $0 < \gamma < \infty$ and $- \gamma < q < p - 1$.
Suppose that $\nu \in \s_{00}$ and $v = \w_{p, w} \nu$.
Assume that $u \in H_{0}^{1, p}(\Omega; w) \cap L^{\infty}(\Omega)$ and $u \ge 0$ in $\Omega$.
Then,
\[
\begin{split}
\int_{\Omega} \tilde{u}^{\gamma + q} \, d \nu
& \le
\left(
\left( \frac{p - 1 + \gamma}{p} \right)^{p}
\int_{\Omega} |\nabla u|^{p} u^{\gamma - 1} \, d w
\right)^{ \frac{\gamma + q}{p - 1 + \gamma} }
\\
& \quad
\times 
\left(
\int_{\Omega} v^{\frac{(\gamma + q)(p - 1)}{p - 1 - q}} \, d \nu
\right)^{ \frac{p - 1 - q}{p - 1 + \gamma} }.
\end{split}
\]
\end{lemma}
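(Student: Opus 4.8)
The plan is to test the finite‑energy equation satisfied by $v$ against $h^{p}(v+\epsilon)^{-(p-1)}$, where $h := u^{(p-1+\gamma)/p}$, to bound the resulting left‑hand side by $\int_{\Omega} |\nabla h|^{p}\,dw$ by means of the Picone inequality, to let $\epsilon \downarrow 0$, and finally to recover $\int_{\Omega} \tilde u^{\gamma+q}\,d\nu$ by Hölder's inequality. We may assume $\nu \neq 0$ and $\int_{\Omega} |\nabla u|^{p} u^{\gamma-1}\,dw < \infty$, since otherwise the claimed inequality is trivial. Because $\nu \in \s_{00} \subset \s_{0}$, Proposition~\ref{prop:potentials}~\ref{cond:03_potentials} identifies $v = \w_{p, w}\nu$ with $\w_{p, w}^{0}\nu$; hence $v$ is a bounded nonnegative finite‑energy weak solution of $-\laplacian_{p, w} v = \nu$ with $v \in H_{0}^{1, p}(\Omega; w)$, and $v > 0$ in $\Omega$ by the strong minimum principle. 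The chain rule for Sobolev functions, together with the fact that $\nabla u = 0$ a.e.\ on $\{ u = 0 \}$, will then give $h \in H_{0}^{1, p}(\Omega; w) \cap L^{\infty}(\Omega)$ and
\[
\int_{\Omega} |\nabla h|^{p} \, dw = \left( \frac{p - 1 + \gamma}{p} \right)^{p} \int_{\Omega} |\nabla u|^{p} u^{\gamma - 1} \, dw .
\]

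Next I would fix $\epsilon > 0$ and set $\varphi_{\epsilon} := h^{p}(v + \epsilon)^{-(p-1)}$. Since $h, v \in H_{0}^{1, p}(\Omega; w) \cap L^{\infty}(\Omega)$ and $v + \epsilon \ge \epsilon > 0$, the function $\varphi_{\epsilon}$ is a nonnegative element of $H_{0}^{1, p}(\Omega; w) \cap L^{\infty}(\Omega)$, hence admissible as a test function for $v$:
\[
\int_{\Omega} \frac{\tilde h^{\,p}}{(v + \epsilon)^{p - 1}} \, d\nu = \int_{\Omega} |\nabla v|^{p - 2} \nabla v \cdot \nabla \varphi_{\epsilon} \, dw ,
\]
where $\tilde h = \tilde u^{(p-1+\gamma)/p}$ denotes the quasicontinuous representative and we use that $\nu$ charges no set of zero $(p, w)$‑capacity. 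The Picone inequality of \cite{MR1618334, MR3273896}, applied pointwise to the pair $(h, v + \epsilon)$ (and unaffected by multiplication by the weight $w > 0$), gives $|\nabla v|^{p - 2}\nabla v \cdot \nabla \varphi_{\epsilon} \le |\nabla h|^{p}$ a.e.\ in $\Omega$; integrating and then letting $\epsilon \downarrow 0$ with the monotone convergence theorem (recall $v > 0$) yields
\[
\int_{\Omega} \frac{\tilde u^{\,p - 1 + \gamma}}{v^{p - 1}} \, d\nu \le \left( \frac{p - 1 + \gamma}{p} \right)^{p} \int_{\Omega} |\nabla u|^{p} u^{\gamma - 1} \, dw .
\]

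To conclude, observe that $r := \frac{p - 1 + \gamma}{\gamma + q}$ and $r' := \frac{p - 1 + \gamma}{p - 1 - q}$ are conjugate exponents (the hypotheses $\gamma > 0$ and $-\gamma < q < p - 1$ are exactly what makes $r, r' \in (1, \infty)$), and that $\nu$-a.e.
\[
\tilde u^{\gamma + q} = \left( \frac{\tilde u^{\,p - 1 + \gamma}}{v^{p - 1}} \right)^{\frac{\gamma + q}{p - 1 + \gamma}} v^{\frac{(\gamma + q)(p - 1)}{p - 1 + \gamma}} .
\]
Hölder's inequality with exponents $r, r'$ then gives
\[
\int_{\Omega} \tilde u^{\gamma + q} \, d\nu \le \left( \int_{\Omega} \frac{\tilde u^{\,p - 1 + \gamma}}{v^{p - 1}} \, d\nu \right)^{\frac{\gamma + q}{p - 1 + \gamma}} \left( \int_{\Omega} v^{\frac{(\gamma + q)(p - 1)}{p - 1 - q}} \, d\nu \right)^{\frac{p - 1 - q}{p - 1 + \gamma}},
\]
and inserting the bound from the previous display (with $v = \w_{p, w}\nu$) produces exactly the claimed estimate.

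I expect the only genuine difficulty to be the first step: showing that $h = u^{(p-1+\gamma)/p}$ belongs to $H_{0}^{1, p}(\Omega; w)$ with the stated energy identity when $0 < \gamma < 1$, since then $t \mapsto t^{(p-1+\gamma)/p}$ has exponent less than $1$ and the chain rule must be justified near $\{ u = 0 \}$. This is where the finiteness assumption $\int_{\Omega} |\nabla u|^{p} u^{\gamma - 1}\,dw < \infty$ enters, via an approximation by the truncations $(h - c)_{+} \uparrow h$ as $c \downarrow 0$, exactly as in \cite[Lemma 3.3]{HARA2020111847}; together with this goes the routine but necessary bookkeeping of quasicontinuous representatives when integrating against $\nu$. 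The Picone step and the final Hölder estimate are then immediate.
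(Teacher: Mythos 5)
Your argument is correct and is exactly the route the paper intends: the paper gives no detailed proof but states that the lemma follows ``as in the proof of \cite[Lemma 3.3]{HARA2020111847}'' from the Picone-type inequality of \cite{MR1618334,MR3273896}, which is precisely your scheme of testing the finite-energy equation for $v=\w_{p,w}^{0}\nu$ against $h^{p}(v+\epsilon)^{-(p-1)}$ with $h=u^{(p-1+\gamma)/p}$, letting $\epsilon\downarrow 0$, and finishing with H\"{o}lder's inequality for the conjugate pair $\frac{p-1+\gamma}{\gamma+q}$, $\frac{p-1+\gamma}{p-1-q}$. Your handling of the two delicate points (admissibility of the test function via the integral representation of $\nu\in\s_{0}$ against quasicontinuous representatives, and the chain rule for $u^{(p-1+\gamma)/p}$ when $0<\gamma<1$ via truncation, matching the paper's own derivation of \eqref{eqn:integrating_by_parts}) is also consistent with the paper.
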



\begin{proof}[Proof of Theorem \ref{thm:embedding}]
We first prove the upper bound of $C_{T}$.
Take a sequence of measures $\{ \sigma_{k} \}_{k = 1}^{\infty} \subset \s_{00}$ by using Theorem \ref{thm:approximation}.
Apply Lemma \ref{lem:trace_estimate} to $u_{k} = \min\{ |u|, k \}$ and $\sigma_{k}$; we get
\[
\begin{split}
\int_{\Omega} u_{k}^{1 + q} \, d \sigma_{k}
\le
\left( \int_{\Omega} |\nabla u_{k}|^{p}  \, d w \right)^{ \frac{1 + q}{p} }
\left(
\int_{\Omega} (\w_{p, w} \sigma_{k})^{\frac{(1 + q)(p - 1)}{p - 1 - q}} \, d \sigma_{k}
\right)^{ \frac{p - 1 - q}{p} }.
\end{split}
\]
The desired estimate then follows from the monotone convergence theorem.
Let us prove the lower bound.
Take $\{ \sigma_{k} \}_{k = 1}^{\infty} \subset \s_{00}$.
Let $u = \w_{p, w}^{0} \sigma_{k}$ and let $v = u^{\frac{p - 1}{p - 1 - q}}$.
Note that
\[
\nabla v = \frac{p - 1}{p - 1 - q} \nabla u u^{ \frac{q}{p - 1 - q} } 
\quad
\text{a.e. in $\Omega$.}
\]
Thus, using \eqref{eqn:integrating_by_parts} with $\gamma = \frac{(1 + q)(p - 1) }{p - 1 - q}$, we get
\[
\begin{split}
\int_{\Omega} |\nabla v|^{p} \, d w
& =
\left( \frac{p - 1}{p - 1 - q} \right)^{p}
\int_{\Omega} |\nabla u|^{p} u^{ \frac{pq}{p - 1 - q} } \, dw
\\
& =
\frac{1}{1 + q}
\left( \frac{p - 1}{p - 1 - q} \right)^{p - 1}
\int_{\Omega} u^{ \frac{(1 + q)(p - 1)}{p - 1 - q} } \, d \sigma_{k}.
\end{split}
\]
By density, \eqref{eqn:trace_ineq} gives
\[
\begin{split}
&
\left(
\int_{\Omega} u^{ \frac{(1 + q)(p - 1)}{p - 1 - q} } \, d \sigma_{k}
\right)^{\frac{1}{1 + q}}
\le
\| v \|_{L^{1 + q}(\sigma)}
\le
C_{T} \| \nabla v \|_{L^{p}(w)}
\\
\quad & =
\left( \frac{1}{1 + q} \right)^{ \frac{1}{p} }
\left( \frac{p - 1}{p - 1 - q} \right)^{ \frac{p - 1}{p} }
C_{T}
\left(
\int_{\Omega} u^{ \frac{(1 + q)(p - 1)}{p - 1 - q} } \, d \sigma_{k}
\right)^{\frac{1}{p}}.
\end{split}
\]
Therefore,
\[
\int_{\Omega} \left( \w_{p, w} \sigma_{k} \right)^{ \frac{(1 + q)(p - 1)}{p - 1 - q} } \, d \sigma_{k}
\le
\left( \frac{1}{1 + q} \right)^{ \frac{1 + q}{p - 1 - q} } \frac{1}{c_{V}^{1 + q}} \,
C_{T}^{ \frac{(1 + q)p}{p - 1 - q} }.
\]
Passing to the limit $k \to \infty$, we arrive at the desired lower bound.
\end{proof}

\begin{corollary}\label{cor:energy_esti}
Let $\mu \in \M^{+}_{0}(\Omega)$ and let $0 < \gamma < \infty$.
Then, $\mu \in \s^{\gamma}$ if and only if $v := ( \w_{p, w} \mu )^{ \frac{p - 1 + \gamma}{p} } \in H_{0}^{1, p}(\Omega; w)$.
Moreover,
\[
\int_{\Omega} (\w_{p, w} \mu)^{\gamma} \, d \mu
\le
\int_{ \Omega } |\nabla v|^{p} \, dw
\le
c_{E} \int_{\Omega} (\w_{p, w} \mu)^{\gamma} \, d \mu.
\]
\end{corollary}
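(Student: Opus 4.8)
The plan is to prove the two implications separately, using the monotone exhaustion of Theorem~\ref{thm:approximation} together with \eqref{eqn:integrating_by_parts} for the forward direction and the Picone inequality for the converse. Write $\beta = \tfrac{p-1+\gamma}{p}$, so that $v = (\w_{p,w}\mu)^{\beta}$ and $\beta^{p} = \gamma c_{E}$; we may assume $\mu \neq 0$. Let $\{\mu_{k}\}$, $\mu_{k} = \mathbf{1}_{F_{k}}\mu \in \s_{00}$, be as in Theorem~\ref{thm:approximation}, set $u_{k} = \w_{p,w}^{0}\mu_{k} = \w_{p,w}\mu_{k}$ (Proposition~\ref{prop:potentials}\ref{cond:03_potentials}) and $v_{k} = u_{k}^{\beta}$. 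As in the proof of Proposition~\ref{prop:potentials}\ref{cond:01_potentials} one has $u_{k} \uparrow u := \w_{p,w}\mu$, hence $v_{k} \uparrow v$; and \eqref{eqn:integrating_by_parts} applied to $\mu_{k}$ gives $v_{k} \in H_{0}^{1,p}(\Omega;w)$ with $\int_{\Omega}|\nabla v_{k}|^{p}\,dw = c_{E}\int_{\Omega}u_{k}^{\gamma}\,d\mu_{k}$, where the right-hand side is $\le c_{E}\,\e_{\gamma}(\mu)$ and, by monotone convergence (since $u_{k}\uparrow u$ and $\mathbf{1}_{F_{k}}\uparrow\mathbf{1}_{\Omega}$ $\mu$-a.e.), increases to $c_{E}\,\e_{\gamma}(\mu)$.

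If $\mu \in \s^{\gamma}$, then $\|\nabla v_{k}\|_{L^{p}(w)}^{p} \le c_{E}\,\e_{\gamma}(\mu) < \infty$ for every $k$, so $\{v_{k}\}$ is bounded in $H_{0}^{1,p}(\Omega;w)$; since $v_{k}\uparrow v$, reflexivity and weak lower semicontinuity of $v\mapsto\int_{\Omega}|\nabla v|^{p}\,dw$ (the weak limit of any subsequence being the pointwise limit $v$) yield $v \in H_{0}^{1,p}(\Omega;w)$ and $\int_{\Omega}|\nabla v|^{p}\,dw \le c_{E}\,\e_{\gamma}(\mu)$, which is the upper bound.

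Conversely, assume $v \in H_{0}^{1,p}(\Omega;w)$; then $u \not\equiv \infty$, so $u$ is $(p,w)$-superharmonic (Proposition~\ref{prop:potentials}\ref{cond:01_potentials}) and $\mu(\{u=\infty\})=0$. Fix $k$ large enough that $\mu_{k}\neq 0$, so that $u_{k} > 0$ on $\Omega$ by the strong minimum principle. For $\epsilon,l>0$ the function $\varphi = \min\{v,l\}^{p}(u_{k}+\epsilon)^{1-p}$ lies in $H_{0}^{1,p}(\Omega;w)\cap L^{\infty}(\Omega)$ and is quasicontinuous, hence is an admissible test function for $-\laplacian_{p,w}u_{k}=\mu_{k}$; the pointwise Picone inequality of \cite{MR1618334,MR3273896}, namely $|\nabla u_{k}|^{p-2}\nabla u_{k}\cdot\nabla\varphi \le |\nabla\min\{v,l\}|^{p}$ a.e., then gives
\[
\int_{\Omega}\frac{\min\{v,l\}^{p}}{(u_{k}+\epsilon)^{p-1}}\,d\mu_{k} \le \int_{\Omega}|\nabla v|^{p}\,dw .
\]
Letting $\epsilon\to 0$ and then $l\to\infty$ (monotone convergence), and then using $v^{p} = u^{p-1+\gamma} \ge u_{k}^{p-1}u^{\gamma}$ $\mu_{k}$-a.e. (since $u_{k}\le u$), I obtain $\int_{\Omega}u^{\gamma}\,d\mu_{k} \le \int_{\Omega}|\nabla v|^{p}\,dw$; a final passage $k\to\infty$ with monotone convergence gives $\e_{\gamma}(\mu) \le \int_{\Omega}|\nabla v|^{p}\,dw < \infty$, so $\mu \in \s^{\gamma}$ and the lower bound holds. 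Together the two directions give the stated equivalence and chain of inequalities.

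I expect the delicate point to be the admissibility of the Picone test function in the converse direction: since $u_{k}$ vanishes on $\del\Omega$, the quotient $\min\{v,l\}^{p}/u_{k}^{p-1}$ need not behave well there, which is precisely why I regularize by $u_{k}+\epsilon$ and truncate $v$ at level $l$ before removing both cut-offs by monotone convergence; one must also confirm that $\varphi$ has finite energy and a quasicontinuous representative so that it may legitimately be paired with $\mu_{k}\in\s_{0}$.
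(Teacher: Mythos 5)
Your proof is correct, and the forward direction (exhaustion $\mu_{k}\in\s_{00}$, identity \eqref{eqn:integrating_by_parts}, boundedness of $\{v_{k}\}$ in $H_{0}^{1,p}(\Omega;w)$, weak lower semicontinuity) is exactly the paper's argument. The converse is where you diverge: the paper applies Theorem \ref{thm:embedding} (itself a consequence of the Picone-type Lemma \ref{lem:trace_estimate}) with the exponent $q=\frac{\gamma p}{p-1+\gamma}-1$ to get the self-improving inequality
\[
\int_{\Omega}u_{k}^{\gamma}\,d\mu_{k}\le\Bigl(\int_{\Omega}|\nabla v|^{p}\,dw\Bigr)^{\frac{1+q}{p}}\Bigl(\int_{\Omega}u_{k}^{\gamma}\,d\mu_{k}\Bigr)^{\frac{p-1-q}{p}},
\]
and then cancels the finite factor on the right; you instead test the equation of $u_{k}$ directly with the regularized Picone function $\min\{v,l\}^{p}(u_{k}+\epsilon)^{1-p}$ and obtain $\int_{\Omega}v^{p}u_{k}^{1-p}\,d\mu_{k}\le\int_{\Omega}|\nabla v|^{p}\,dw$ in one step, after which $v^{p}u_{k}^{1-p}\ge u^{\gamma}$ finishes the job. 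Both arguments rest on the same pointwise Picone inequality, but yours is self-contained (it does not invoke Theorem \ref{thm:embedding} or Lemma \ref{lem:trace_estimate}) and avoids the cancellation step, at the cost of having to verify admissibility of the test function — which you do correctly: $u_{k}=\w_{p,w}^{0}\mu_{k}$ is bounded and lies in $H_{0}^{1,p}(\Omega;w)$ since $\mu_{k}\in\s_{00}$, so $\min\{v,l\}^{p}(u_{k}+\epsilon)^{1-p}$ is a bounded quasicontinuous element of $H_{0}^{1,p}(\Omega;w)$ and may be paired with the finite measure $\mu_{k}\in\s_{0}$, and the strong minimum principle gives $u_{k}>0$ so the $\epsilon\downarrow0$, $l\uparrow\infty$ limits go through by monotone convergence. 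The constants obtained are identical in both routes.
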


\begin{proof}
Let $\{ \mu_{k} \}_{k = 1}^{\infty}$ be a sequence of Radon measures in Theorem \ref{thm:approximation}.
Set $u_{k} = \w_{p, w}^{0} \mu_{k}$ and $v_{k} = \left( u_{k} \right)^{ \frac{p - 1 + \gamma}{p} }$.
Assume that $\mu \in \s^{\gamma}$.
Then, by \eqref{eqn:integrating_by_parts}, $\{ v_{k} \}_{k = 1}^{\infty}$ is a bounded sequence in $H_{0}^{1, p}(\Omega; w)$.
Since $v_{k} \uparrow v$, $v$ belongs to $H_{0}^{1, p}(\Omega; w)$ and satisfies the latter inequality.
Conversely, assume that $v \in H_{0}^{1, p}(\Omega; w)$.
Then, by Theorem \ref{thm:embedding},
\[
\int_{\Omega} u_{k}^{\gamma} \, d \mu_{k}
\le
\int_{\Omega} v^{1 + q} \, d \mu_{k}
\le
\left( \int_{\Omega} |\nabla v|^{p} \, dw \right)^{\frac{1 + q}{p}}
\left( \int_{\Omega} u_{k}^{\gamma} \, d \mu_{k} \right)^{ \frac{p - 1 - q}{p} },
\]
where $q = \frac{\gamma p}{p - 1 + \gamma} - 1$.
Thus using the monotone convergence theorem, we obtain
\[
\int_{\Omega} (\w_{p, w} \mu)^{\gamma} \, d \mu
=
\lim_{k \to \infty} \int_{\Omega} u_{k}^{\gamma} \, d \mu_{k}
\le
\int_{\Omega} |\nabla v|^{p} \, dw.
\]
This completes the proof.
\end{proof}

We also obtain the following estimate using a similar approximation argument.

\begin{theorem}\label{thm:MEE}
Let $1 < p < \infty$, $0 < \gamma < \infty$ and $- \gamma < q < p - 1$.
Then, for any $\mu, \nu \in \M^{+}_{0}(\Omega)$,
\begin{equation}\label{eqn:01@MEE}
\int_{\Omega} ( \w_{p, w} \mu )^{\gamma + q} \, d \nu
\le
\left(
c_{E} \int_{\Omega} (\w_{p, w} \mu)^{\gamma} \, d \mu
\right)^{ \frac{\gamma + q}{p - 1 + \gamma} }
\e_{ \frac{(\gamma + q)(p - 1)}{p - 1 - q} }(\nu)^{\frac{p - 1 - q}{p - 1 + \gamma}}.
\end{equation}
In particular,
if $\mu \in \s^{\gamma}$ and $\nu \in \s^{\frac{ (\gamma + q)(p - 1) }{ p - 1 - q}}$,
then $\w_{p, w} \mu \in L^{\gamma + q}(\nu)$.
\end{theorem}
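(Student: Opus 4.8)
The plan is to establish \eqref{eqn:01@MEE} first for $\mu,\nu\in\s_{00}$, directly from Lemma~\ref{lem:trace_estimate} and the integration-by-parts identity \eqref{eqn:integrating_by_parts}, and then to reach the general case by the monotone exhaustion furnished by Theorem~\ref{thm:approximation}, exactly as in the proof of Theorem~\ref{thm:embedding}. We may assume $\mu\in\s^{\gamma}$ and $\nu\in\s^{(\gamma+q)(p-1)/(p-1-q)}$, the remaining (degenerate) cases being trivial in view of Lemma~\ref{lem:wmp}; in particular neither $\w_{p,w}\mu$ nor $\w_{p,w}\nu$ is then identically $+\infty$.

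\emph{Step~1: the case $\mu,\nu\in\s_{00}$.} Set $u=\w_{p,w}^{0}\mu$, which coincides with $\w_{p,w}\mu$ by Proposition~\ref{prop:potentials}\ref{cond:03_potentials}. Since $\mu\in\s_{00}$, the function $u$ is nonnegative, bounded, and lies in $H_{0}^{1,p}(\Omega;w)$, and it is quasicontinuous (being $(p,w)$-superharmonic), so $\tilde u=u$. I would apply Lemma~\ref{lem:trace_estimate} with this $u$ and with $\nu$ playing the role of the measure there, so that $v=\w_{p,w}\nu=\w_{p,w}^{0}\nu$ and the second factor on the right becomes $\e_{(\gamma+q)(p-1)/(p-1-q)}(\nu)^{(p-1-q)/(p-1+\gamma)}$. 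For the first factor, \eqref{eqn:integrating_by_parts} gives
\[
\Bigl(\tfrac{p-1+\gamma}{p}\Bigr)^{p}\int_{\Omega}|\nabla u|^{p}u^{\gamma-1}\,dw
=\Bigl(\tfrac{p-1+\gamma}{p}\Bigr)^{p}\frac{1}{\gamma}\int_{\Omega}(\w_{p,w}\mu)^{\gamma}\,d\mu
=c_{E}\int_{\Omega}(\w_{p,w}\mu)^{\gamma}\,d\mu,
\]
which yields precisely \eqref{eqn:01@MEE} for $\mu,\nu\in\s_{00}$.

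\emph{Step~2: approximation.} By Theorem~\ref{thm:approximation} I would choose increasing compact sets with $\mu_{k}:=\mathbf{1}_{F_{k}}\mu\in\s_{00}$, $\mu(\Omega\setminus\bigcup_{k}F_{k})=0$, and likewise $\nu_{k}:=\mathbf{1}_{G_{k}}\nu\in\s_{00}$, $\nu(\Omega\setminus\bigcup_{k}G_{k})=0$. Step~1 applies to each pair $(\mu_{k},\nu_{k})$. Since $\mu_{k}\le\mu$ and $\nu_{k}\le\nu$, Proposition~\ref{prop:potentials}\ref{cond:02_potentials} gives $\w_{p,w}\mu_{k}\uparrow\w_{p,w}\mu$ and $\w_{p,w}\nu_{k}\uparrow\w_{p,w}\nu$ (the increasing limits are dominated by $\w_{p,w}\mu$, resp.\ $\w_{p,w}\nu$, hence are not identically infinite). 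On the left of \eqref{eqn:01@MEE} the integrand $(\w_{p,w}\mu_{k})^{\gamma+q}$ is nondecreasing in $k$ while the measures $\nu_{k}$ increase to $\nu$, and the two factors on the right behave the same way; so, using $\gamma>0$ and $\gamma+q>0$, three applications of the monotone convergence theorem let me pass to the limit $k\to\infty$ in the $\s_{00}$-estimate and obtain \eqref{eqn:01@MEE} for arbitrary $\mu,\nu\in\M^{+}_{0}(\Omega)$. The final assertion is then immediate, since $\mu\in\s^{\gamma}$ and $\nu\in\s^{(\gamma+q)(p-1)/(p-1-q)}$ make the right-hand side finite.

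The computations are routine once Lemma~\ref{lem:trace_estimate}, \eqref{eqn:integrating_by_parts} and Theorem~\ref{thm:approximation} are in hand; the only point needing care is the bookkeeping in Step~2, where one must exhaust $\mu$ and $\nu$ simultaneously and check that the monotonicities of the three integrands and of the underlying measures are compatible, so that all three passages to the limit are genuinely covered by monotone convergence.
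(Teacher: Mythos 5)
Your proof is correct and takes essentially the same route the paper intends: the paper only remarks that Theorem~\ref{thm:MEE} follows ``using a similar approximation argument,'' and your Step~1 (Lemma~\ref{lem:trace_estimate} combined with \eqref{eqn:integrating_by_parts} for $\s_{00}$ measures) followed by Step~2 (the exhaustion from Theorem~\ref{thm:approximation} and monotone convergence) is precisely that argument, with the degenerate cases correctly dispatched via Lemma~\ref{lem:wmp}.
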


\begin{remark}
Note that $c_{E} = 1$ if $\gamma = 1$.
This sharp constant is achieved when $\mu = \nu$ is an equilibrium measure.
\end{remark}


\begin{remark}\label{rem:quasi-additivity}
For $0 < \gamma < \infty$, set $\trinorm{\mu}_{\gamma} = \e_{\gamma}(\mu)^{\frac{p - 1}{p - 1 + \gamma}}$.
Then by Theorem \ref{thm:MEE},
\[
\begin{split}
&
\int_{\Omega} \w_{p, w}(\mu + \nu)^{\gamma} d (\mu + \nu)
=
\int_{\Omega} \w_{p, w}(\mu + \nu)^{\gamma} d \mu
+
\int_{\Omega} \w_{p, w}(\mu + \nu)^{\gamma} d \nu
\\
& \quad
\le
c_{E}^{\gamma} \left( \int_{\Omega} \w_{p, w}(\mu + \nu)^{\gamma} d (\mu + \nu) \right)^{\frac{\gamma}{p - 1 + \gamma}}
\left( \trinorm{\mu}_{\gamma} + \trinorm{\nu}_{\gamma} \right),
\end{split}
\]
and hence
\[
\trinorm{\mu + \nu}_{\gamma} \le c_{E}^{\gamma} \left( \trinorm{\mu}_{\gamma} + \trinorm{\nu}_{\gamma} \right).
\]
In particular, $\s^{\gamma}$ is a convex cone.
\end{remark}

Finally, we prove weighted norm inequalities.

\begin{theorem}\label{thm:WNI}
Let $1 < p < \infty$, $0 < q < p - 1$ and $0 < \gamma < \infty$.
Assume that $\sigma \in \s^{ \frac{(\gamma + q)(p - 1)}{p - 1 - q} }$.
Then, for any $f \in L^{ \frac{\gamma + q}{q} }(\sigma)$, $|f| \sigma \in \s^{\gamma}$.
Moreover,
\[
\e_{\gamma}( |f| \sigma )^{\frac{p - 1}{p - 1 + \gamma}}
\le
\left(
c_{E} \, \e_{ \frac{(\gamma + q)(p - 1)}{p - 1 - q} }(\sigma)^{\frac{p - 1 - q}{\gamma + q}}
\right)^{ \frac{\gamma}{p - 1 + \gamma} }
\| f \|_{L^{ \frac{\gamma + q}{q} }(\sigma)}
\]
and
\[
\| \w_{p, w} ( |f| \sigma ) \|_{L^{\gamma + q}( \sigma )}
\le
\left(
c_{E} \, \e_{ \frac{(\gamma + q)(p - 1)}{p - 1 - q} }(\sigma)^{\frac{p - 1 - q}{\gamma + q}}
\right)^{ \frac{1}{p - 1} }
\| f \|_{L^{ \frac{\gamma + q}{q} }(\sigma)}^{\frac{1}{p - 1}}.
\]
\end{theorem}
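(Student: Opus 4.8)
The plan is to run a self-improving (bootstrap) argument: combine the modified energy estimate of Theorem~\ref{thm:MEE} with H\"older's inequality to bound $\e_\gamma(|f|\sigma)$ in terms of itself. Since we do not know a priori that $\e_\gamma(|f|\sigma)<\infty$, the bootstrap is first carried out for a finite-energy approximation of $f$ and then transferred to $|f|\sigma$ by monotone convergence. We may assume $f\ge0$ and $f\sigma\ne0$. Using Theorem~\ref{thm:approximation} for $\sigma$, choose an increasing sequence of compact sets $\{F_k\}$ with $\sigma_k:=\mathbf{1}_{F_k}\sigma\in\s_{00}$ and $\sigma(\Omega\setminus\bigcup_k F_k)=0$, and put $f_k:=\min\{f,k\}\mathbf{1}_{F_k}$ and $\mu_k:=f_k\sigma$. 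Since $f_k\in L^\infty(\sigma_k)$ and $\sigma_k\in\s_{00}$, we have $\mu_k=f_k\sigma_k\in\s_{00}\subset\s^\gamma$; in particular $\e_\gamma(\mu_k)<\infty$, and $\w_{p,w}\mu_k=\w_{p,w}^0\mu_k$ by Proposition~\ref{prop:potentials}\ref{cond:03_potentials}. Moreover $\mu_k\uparrow f\sigma$, so the potentials $\w_{p,w}\mu_k$ increase.

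Next, I would prove the bootstrap inequality. Writing $d\mu_k=f_k\,d\sigma$ and applying H\"older's inequality with exponents $\tfrac{\gamma+q}{q}$ and $\tfrac{\gamma+q}{\gamma}$ gives
\[
\e_\gamma(\mu_k)=\int_\Omega(\w_{p,w}\mu_k)^\gamma f_k\,d\sigma
\le\|f_k\|_{L^{\frac{\gamma+q}{q}}(\sigma)}\left(\int_\Omega(\w_{p,w}\mu_k)^{\gamma+q}\,d\sigma\right)^{\frac{\gamma}{\gamma+q}}.
\]
By Theorem~\ref{thm:MEE} with $(\mu,\nu)=(\mu_k,\sigma)$, the last integral is at most $\bigl(c_E\,\e_\gamma(\mu_k)\bigr)^{\frac{\gamma+q}{p-1+\gamma}}\e_{\frac{(\gamma+q)(p-1)}{p-1-q}}(\sigma)^{\frac{p-1-q}{p-1+\gamma}}$, which is finite since $\sigma\in\s^{\frac{(\gamma+q)(p-1)}{p-1-q}}$ and $\e_\gamma(\mu_k)<\infty$. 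Absorbing the finite factor $\e_\gamma(\mu_k)^{\gamma/(p-1+\gamma)}$ into the left-hand side and using $0\le f_k\le f$, we obtain, uniformly in $k$,
\[
\e_\gamma(\mu_k)^{\frac{p-1}{p-1+\gamma}}\le\left(c_E\,\e_{\frac{(\gamma+q)(p-1)}{p-1-q}}(\sigma)^{\frac{p-1-q}{\gamma+q}}\right)^{\frac{\gamma}{p-1+\gamma}}\|f\|_{L^{\frac{\gamma+q}{q}}(\sigma)}.
\]

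Finally, I would pass to the limit. Let $u:=\lim_k\w_{p,w}\mu_k$. The sequence $(\w_{p,w}\mu_k)^\gamma f_k$ is nondecreasing and nonnegative, so monotone convergence yields $\e_\gamma(\mu_k)\uparrow\int_\Omega u^\gamma f\,d\sigma$; by the uniform bound this limit is finite, hence $u\not\equiv\infty$ (here $f\sigma\ne0$ is used), so $u=\w_{p,w}(f\sigma)$ by Proposition~\ref{prop:potentials}\ref{cond:02_potentials}. Consequently $\e_\gamma(f\sigma)=\int_\Omega u^\gamma f\,d\sigma=\lim_k\e_\gamma(\mu_k)$, so $|f|\sigma\in\s^\gamma$ and the first displayed estimate of the theorem follows. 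For the second estimate, apply Theorem~\ref{thm:MEE} with $(\mu,\nu)=(f\sigma,\sigma)$ to $\|\w_{p,w}(f\sigma)\|_{L^{\gamma+q}(\sigma)}^{\gamma+q}=\int_\Omega(\w_{p,w}(f\sigma))^{\gamma+q}\,d\sigma$ and substitute the first estimate for $\e_\gamma(f\sigma)$; the exponents of $c_E$, of $\e_{\frac{(\gamma+q)(p-1)}{p-1-q}}(\sigma)$, and of $\|f\|_{L^{(\gamma+q)/q}(\sigma)}$ collect to $\tfrac{\gamma+q}{p-1}$, $\tfrac{p-1-q}{p-1}$, and $\tfrac{\gamma+q}{p-1}$ respectively, and taking the $\tfrac{1}{\gamma+q}$-th power gives the stated inequality.

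The main obstacle is exactly the circularity of the bootstrap: the absorption step is valid only once $\e_\gamma$ is known to be finite, which is part of the conclusion, so the argument must be performed on the $\s_{00}$-approximants $\mu_k$ (where finiteness is automatic from $\mu_k\in\s_{00}\subset\s^\gamma$) and only then transported to $f\sigma$; the accompanying technical point is the simultaneous monotone convergence of the measures $\mu_k\uparrow f\sigma$ and the potentials $\w_{p,w}\mu_k\uparrow\w_{p,w}(f\sigma)$ inside the energy integral.
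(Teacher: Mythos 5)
Your proof is correct and follows essentially the same route as the paper: the key step in both is to combine Theorem \ref{thm:MEE} applied to $(\mu,\nu)=(f\sigma,\sigma)$ with H\"older's inequality (exponents $\tfrac{\gamma+q}{q}$ and $\tfrac{\gamma+q}{\gamma}$) and absorb the energy term. The only difference is that you carry out the absorption on the $\s_{00}$-approximants $\mu_k$ and pass to the limit by monotone convergence, which carefully justifies the a priori finiteness needed for the absorption — a point the paper's two-line proof leaves implicit.
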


\begin{proof}
We may assume that $f \ge 0$ without loss of generality.
By Theorem \ref{thm:MEE},
\[
\begin{split}
\int_{\Omega} ( \w_{p, w}(f \sigma) )^{\gamma + q} \, d \sigma
& \le
\left(
c_{E}
\int_{\Omega} (\w_{p, w}(f \sigma))^{\gamma} f  \, d \sigma
\right)^{ \frac{\gamma + q}{p - 1 + \gamma} }
\e_{ \frac{(\gamma + q)(p - 1)}{p - 1 - q} }(\sigma)^{\frac{p - 1 - q}{p - 1 + \gamma}}.
\end{split}
\]
Meanwhile, by H\"{o}lder's inequality,
\[
\int_{\Omega} (\w_{p, w}(f \sigma))^{\gamma} f  \, d \sigma
\le
\left(
\int_{\Omega} (\w_{p, w}(f \sigma))^{\gamma + q} \, d \sigma
\right)^{\frac{\gamma}{\gamma + q}}
\| f \|_{L^{ \frac{\gamma + q}{q} }(\sigma)}.
\]
Combining the two inequalities, we obtain the desired estimates.
\end{proof}

\begin{remark}\label{rem:two-weight}
Under the same assumptions,
suppose also that
$\nu \in \s^{ \frac{(\gamma + Q)(p - 1)}{p - 1 - Q} }$
with
$- \gamma < Q < p - 1$.
Then, the following two weight norm inequality holds:
\[
\| \w_{p, w}( |f| \sigma) \|_{L^{\gamma + Q}(\nu)}
\le
C \| f \|_{L^{ \frac{\gamma + q}{q} }(\sigma)}^{ \frac{1}{p - 1} },
\quad
\forall f \in L^{\frac{\gamma + q}{q}}(\sigma).
\]
In fact, by Theorem \ref{thm:MEE},
\[
\begin{split}
\int_{\Omega} ( \w_{p, w}(f \sigma) )^{\gamma + Q} \, d \nu
& \le
C
\left(
\int_{\Omega} (\w_{p, w}(f \sigma))^{\gamma} f  \, d \sigma
\right)^{ \frac{\gamma + Q}{p - 1 + \gamma} }
\e_{ \frac{(\gamma + Q)(p - 1)}{p - 1 - Q} }(\nu)^{\frac{p - 1 - Q}{p - 1 + \gamma}}.
\end{split}
\]
The right-hand side is estimated by Theorem \ref{thm:WNI}. 
\end{remark}

\section{Properties of solutions to \eqref{eqn:p-laplace}}\label{sec:proof_of_main_theorem}

First, we give the counterpart of \cite[Lemma 3.5]{MR3567503} or \cite[Remark 2.6]{MR4105916}.

\begin{lemma}\label{lem:iterated_ineq}
Let $\sigma \in \M_{0}(\Omega)$, and let $\beta \ge 1$.
Assume that $\left( \w_{p, w} \sigma \right)^{(\beta - 1)(p - 1)} \sigma \in \M_{0}(\Omega)$.
Then,
\[
\left( \w_{p, w} \sigma \right)^{\beta}(x)
\le
\beta \, \w_{p, w} \left( \left( \w_{p, w} \sigma \right)^{(\beta - 1)(p - 1)} \sigma \right)(x),
\quad
\forall x \in \Omega.
\]
\end{lemma}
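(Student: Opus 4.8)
The plan is to prove the inequality first in the \emph{bounded case} $\sigma \in \s_{00}$, where $u := \w_{p, w}^{0} \sigma$ is a bounded weak solution of $-\laplacian_{p, w} u = \sigma$ lying in $H_{0}^{1, p}(\Omega; w)$, and then to recover the general case via the approximation $\sigma_{k} \uparrow \sigma$ from Theorem \ref{thm:approximation}. The case $\beta = 1$ is trivial, so I will assume $\beta > 1$; I may also assume $\sigma \ne 0$, and then the hypothesis that $(\w_{p, w} \sigma)^{(\beta - 1)(p - 1)} \sigma$ is a locally finite measure forces $\w_{p, w} \sigma \not\equiv \infty$ (otherwise its $(\beta-1)(p-1)$-th power times $\sigma$ would not be locally finite), so that $u = \w_{p, w} \sigma$ is genuinely $(p, w)$-superharmonic and $u_{k} := \w_{p, w}^{0} \sigma_{k} \uparrow u$ by Proposition \ref{prop:potentials}.

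For the bounded case, the first step is a chain-rule computation. Since $|\nabla u^{\beta}|^{p - 2} \nabla u^{\beta} = \beta^{p - 1} u^{(\beta - 1)(p - 1)} |\nabla u|^{p - 2} \nabla u$ a.e., I would test the equation of $u$ against $\psi = u^{(\beta - 1)(p - 1)} \varphi$ for nonnegative $\varphi \in C_{c}^{\infty}(\Omega)$ and discard the nonnegative term $(\beta - 1)(p - 1) \int_\Omega u^{(\beta - 1)(p - 1) - 1} \varphi |\nabla u|^{p} \, dw$, obtaining $-\laplacian_{p, w}(u^{\beta}) \le \beta^{p - 1} u^{(\beta - 1)(p - 1)} \sigma$ in the weak sense. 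Next I would compare $v := u^{\beta} \in H_{0}^{1, p}(\Omega; w) \cap L^{\infty}(\Omega)$ with $w := \w_{p, w}^{0}(\beta^{p - 1} u^{(\beta - 1)(p - 1)} \sigma) = \beta \, \w_{p, w}^{0}(u^{(\beta - 1)(p - 1)} \sigma)$, using the homogeneity of $\w_{p, w}^{0}$ and the fact that $u^{(\beta-1)(p-1)}\sigma \in \s_{00}$. Extending the subsolution inequality for $v$ to $H_{0}^{1, p}$-test functions (legitimate since $\mu := \beta^{p-1}u^{(\beta-1)(p-1)}\sigma$ is finite and in $\s_{0}$) and subtracting the weak equation for $w$, both tested against $(v - w)_{+} \in H_{0}^{1, p}(\Omega; w)$, gives
\[
\int_{\Omega} \bigl( |\nabla v|^{p - 2} \nabla v - |\nabla w|^{p - 2} \nabla w \bigr) \cdot \nabla (v - w)_{+} \, dw \le 0 ,
\]
so strict monotonicity of the $p$-Laplacian and the Poincar\'e inequality yield $v \le w$ a.e. Since $u$ is $(p, w)$-superharmonic and $t \mapsto t^{\beta}$ is continuous and increasing, $u^{\beta}$ equals $\lim_{r \to 0} \essinf_{B(\cdot, r)} u^{\beta}$, as does $w$, so the a.e. inequality upgrades to $u^{\beta}(x) \le w(x) \le \beta \, \w_{p, w}(u^{(\beta - 1)(p - 1)} \sigma)(x)$ for every $x \in \Omega$.

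For the general case I would apply the bounded estimate to each $\sigma_{k} = \mathbf{1}_{F_{k}} \sigma \in \s_{00}$, observe that $u_{k}^{(\beta - 1)(p - 1)} \sigma_{k} \le u^{(\beta - 1)(p - 1)} \sigma$ (using $(\beta - 1)(p - 1) \ge 0$, $u_{k} \le u$, and $\mathbf{1}_{F_{k}} \le 1$), and invoke the monotonicity of $\w_{p, w}$ — this is exactly where the hypothesis $(\w_{p, w} \sigma)^{(\beta - 1)(p - 1)} \sigma \in \M^{+}_{0}(\Omega)$ is needed, to guarantee the right-hand member is a legitimate measure — to get $u_{k}^{\beta} \le \beta \, \w_{p, w}(u^{(\beta - 1)(p - 1)} \sigma)$ for all $k$; letting $k \to \infty$ with $u_{k} \uparrow u$ then finishes the proof.

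I expect the main obstacle to be the two routine-but-delicate points in the bounded case. First, the test function $\psi = u^{(\beta - 1)(p - 1)} \varphi$ need not obviously lie in $H_{0}^{1, p}(\Omega; w)$ near the zero set of $u$ when $(\beta - 1)(p - 1) < 1$, since $t \mapsto t^{(\beta - 1)(p - 1)}$ fails to be Lipschitz at $0$; I would resolve this by the strong minimum principle, which gives $\inf_{\spt \varphi} u > 0$ (assuming $u \not\equiv 0$, which we may), so that only the values of the exponent function away from the origin matter and $\psi$ can be defined through a Lipschitz modification of $t^{(\beta - 1)(p - 1)}$ near $t = 0$. Second, the passage from the a.e. comparison $v \le w$ to a genuinely pointwise one cannot use the comparison principle for superharmonic functions (as $v = u^{\beta}$ is only a subsolution), but follows from the observation just made that both $v$ and $w$ coincide with $\lim_{r \to 0} \essinf_{B(\cdot, r)}$ of themselves.
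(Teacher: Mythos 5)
Your proof is correct and follows essentially the same route as the paper: reduce to $\sigma \in \s_{00}$, test the equation of $u$ with $u^{(\beta-1)(p-1)}\varphi$, discard the nonnegative term, invoke the weak comparison principle against $\w_{p,w}^{0}(\beta^{p-1}u^{(\beta-1)(p-1)}\sigma)$, upgrade the a.e.\ inequality to a pointwise one via lsc-regularization, and pass to the limit along the $\s_{00}$-approximation. The only cosmetic difference is in handling the non-Lipschitz exponent near $u=0$ when $(\beta-1)(p-1)<1$: you use the strong minimum principle on $\spt\varphi$, whereas the paper truncates with $(u^{(\beta-1)(p-1)}-\epsilon)_{+}$ and lets $\epsilon\to 0$; both are valid.
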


\begin{proof}
By Proposition \ref{prop:potentials},
we may assume that $\sigma \in \s_{00}$ without loss of generality.
We use the argument in \cite[Lemma 4.4]{HARA2020111847}.
Let $u = \w_{p, w} \sigma$.
Since $u$ is bounded on $\Omega$,
$u^{\beta} \in H_{0}^{1, p}(\Omega; w)$
and
$u^{(\beta - 1)(p - 1)} \sigma \in \s_{00}$.
Fix a nonnegative function $\varphi \in C_{c}^{\infty}(\Omega)$.
Testing the equation of $u$ with $\varphi (u^{(\beta - 1)(p - 1)} - \epsilon)_{+}$, we find that
\[
\begin{split}
\int_{\Omega} \varphi (u^{(\beta - 1)(p - 1)} - \epsilon)_{+} \, d \sigma
& =
\int_{\Omega}
|\nabla u|^{p - 2} \nabla u \cdot \nabla \left( \varphi (u^{(\beta - 1)(p - 1)} - \epsilon)_{+} \right)
\, dw
\\
& \ge
\beta^{1 - p}
\int_{ \{ u^{(\beta - 1)(p - 1)} > \epsilon \} }
|\nabla u^{\beta}|^{p - 2} \nabla u^{\beta}
\cdot \nabla \varphi
\, dw.
\end{split}
\]
Applying the dominated convergence theorem to the right-hand side, we obtain
\[
\begin{split}
\int_{\Omega} \varphi u^{(\beta - 1)(p - 1)} \, d \sigma
\ge
\beta^{1 - p}
\int_{\Omega}
|\nabla u^{\beta}|^{p - 2} \nabla u^{\beta}
\cdot \nabla \varphi
\, dw.
\end{split}
\]
By the comparison principle for weak solutions, this implies that
\[
u^{\beta}
\le
\w_{p, w}^{0}( \beta^{p - 1} u^{(\beta - 1)(p - 1)} \sigma)
\quad
\text{a.e. in $\Omega$.}
\]
Since $u$ is $(p, w)$-superharmonic in $\Omega$, the desired inequality holds.
\end{proof}

Next, we give the counterpart of \cite[Theorem 3.4]{MR3567503} or \cite[Theorem 1.3]{MR4105916}.

\begin{theorem}\label{thm:lower_bound}
Let $1 < p < \infty$ and $0 < q < p - 1$.
Let $\sigma \in \M^{+}_{0}(\Omega)$.
Let  $v \in L^{q}_{\loc}(\sigma)$ be a nontrivial nonnegative $(p, w)$-superharmonic supersolution to
$- \laplacian_{p, w} v = \sigma v^{q}$ in $\Omega$.
Then,
\[
v(x) \ge c_{V} \left( \w_{p, w} \sigma \right)^{ \frac{p - 1}{p - 1 - q} }(x),
\quad
\forall x \in \Omega,
\]
where $c_{V}$ is the constant in \eqref{eqn:opt-const}.
\end{theorem}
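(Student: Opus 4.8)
The plan is to bootstrap the comparison principle with the iteration inequality of Lemma~\ref{lem:iterated_ineq}. We may assume $\sigma\neq 0$ (otherwise $\w_{p,w}\sigma\equiv 0$ and there is nothing to prove), and since $v$ is a nontrivial nonnegative $(p,w)$-superharmonic function, the strong minimum principle gives $v>0$ throughout $\Omega$. Fix the exhausting sequence $\mu_k=\mathbf 1_{F_k}\sigma\in\s_{00}$ from Theorem~\ref{thm:approximation}, with $F_k$ compact, $F_k\subset F_{k+1}$, $F_k\Subset\Omega$; by Proposition~\ref{prop:potentials}, $h_k:=\w_{p,w}^0\mu_k$ increases to $\w_{p,w}\sigma$, and each $h_k$ is bounded. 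Discarding the indices with $\mu_k=0$, I will first prove $v\ge c_V h_k^{\,(p-1)/(p-1-q)}$ for each fixed $k$, and then let $k\to\infty$.

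\emph{Step 1 (a crude lower bound).} Fix $k$. As $v$ is lower semicontinuous and positive on the compact set $F_k$, $m_k:=\inf_{F_k}v>0$. Since $v\ge m_k$ on $F_k$ and $\mu[v]\ge\sigma v^q$, the scaled function $m_k^{-q/(p-1)}v$ has Riesz measure $m_k^{-q}\mu[v]\ge\mathbf 1_{F_k}\sigma=\mu_k$; as $h_k\in H_0^{1,p}(\Omega;w)$ has Riesz measure $\mu_k$, Theorem~\ref{thm:comparison_principle} yields $v\ge A_0 h_k$ in $\Omega$ with $A_0:=m_k^{q/(p-1)}\in(0,\infty)$.

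\emph{Step 2 (iteration).} Define $t_0=1$, $t_{j+1}=1+\tfrac{q}{p-1}t_j$ and $A_{j+1}=A_j^{q/(p-1)}/t_{j+1}$. Assume inductively $v\ge A_j h_k^{t_j}$. Then $\sigma v^q\ge A_j^q h_k^{t_jq}\mu_k=:\lambda_j$, and $\lambda_j\in\s_{00}$ because $h_k$ is bounded; since $\lambda_j\le\mu[v]$ and $\w_{p,w}\lambda_j=\w_{p,w}^0\lambda_j\in H_0^{1,p}(\Omega;w)$, the comparison principle for $\w_{p,w}$ (the remark after Definition~\ref{def:potential}) gives $v\ge\w_{p,w}\lambda_j=A_j^{q/(p-1)}\,\w_{p,w}(h_k^{t_jq}\mu_k)$. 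Applying Lemma~\ref{lem:iterated_ineq} to $\mu_k$ with $\beta=1+\tfrac{t_jq}{p-1}=t_{j+1}\ge 1$ (its hypothesis $h_k^{t_jq}\mu_k\in\M_0^{+}(\Omega)$ being clear) bounds the last potential below by $\tfrac{1}{t_{j+1}}h_k^{t_{j+1}}$, so $v\ge A_{j+1}h_k^{t_{j+1}}$, closing the induction.

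\emph{Step 3 (limits).} Since $\tfrac{q}{p-1}\in(0,1)$, $t_j\uparrow s:=\tfrac{p-1}{p-1-q}$; taking logarithms in the recursion for $A_j$, one checks that $A_j\to c_V$ for \emph{any} starting value $A_0>0$ — the contraction factor $\tfrac{q}{p-1}$ annihilates the initial term, and the fixed point of $a=\tfrac{q}{p-1}a-\log s$ is exactly $\log c_V$. Letting $j\to\infty$ in $v\ge A_j h_k^{t_j}$ (legitimate since $h_k$ is finite) gives $v\ge c_V h_k^{\,s}$ pointwise; letting $k\to\infty$ and using $h_k\uparrow\w_{p,w}\sigma$ yields the claim. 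The one genuinely delicate point is that $A_0=m_k^{q/(p-1)}$ degenerates to $0$ as $k\to\infty$ (the support of $\mu_k$ may approach $\partial\Omega$, where $v$ need not be bounded below), so Step~1 alone is useless in the limit; it is the self-improving iteration of Step~2 — which forgets $A_0$ and converges to the sharp constant $c_V$ — that carries the argument, provided the limit $j\to\infty$ is taken before $k\to\infty$.
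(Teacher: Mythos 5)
Your proof is correct and is essentially the paper's own argument: both rest on Theorem \ref{thm:comparison_principle} plus an iteration of Lemma \ref{lem:iterated_ineq} along the recursion $t_{j+1}=1+\tfrac{q}{p-1}t_j$, with a contraction factor $\tfrac{q}{p-1}$ that washes out the degenerate starting constant and produces $c_V$ in the limit. The only (cosmetic) difference is the initial truncation — you restrict $\sigma$ to the compacts $F_k$ and use $\inf_{F_k}v>0$, whereas the paper restricts to the level set $\{\w_{p,w}(v^q\sigma)>a\}$ and lets $a\to 0$ — and your phrasing of the iteration as a pointwise induction rather than as nested potentials.
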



\begin{proof}
For simplicity, we write $\w_{p, w} \mu$ as $\w \mu$.
Let $u = \w(v^{q} \sigma)$.
By Theorem \ref{thm:comparison_principle}, $v(x) \ge u(x)$ for all $x \in \Omega$.
Fix $a > 0$, and set $\sigma_{a} = \mathbf{1}_ { \{ x\in \Omega \colon u(x) > a \} } \sigma$.
Using Theorem \ref{thm:comparison_principle} again, we get
\[
u
\ge
\w(u^{q} \sigma)
\ge
\w(a^{q} \sigma_{a} )
=
a^{\frac{q}{p - 1}} \w \sigma_{a}.
\]
Continuing this argument $k$-times, we obtain
\begin{equation}\label{eqn:01@thm:lower_bound}
\begin{split}
u
& \ge
\w( \w( \cdots ( \w( u^{q} \sigma) )^{q} \cdots \sigma)^{q} \sigma)
\\
& \ge
\w( \w( \cdots ( \w( a^{q} \sigma_{a}) )^{q} \cdots \sigma )^{q} \sigma )
\\
& \ge
a^{ (\frac{q}{p - 1})^{k} }
\w( \w( \cdots ( \w \sigma_{a} )^{q} \cdots \sigma_{a} )^{q} \sigma_{a} ).
\end{split}
\end{equation}
Meanwhile, by Lemma \ref{lem:iterated_ineq}, 
\[
\left( \w \sigma_{a} \right)^{\beta_{i + 1}}
\le
\beta_{i + 1} \, \w \left( \left( \w \sigma_{a} \right)^{\beta_{i} q} \sigma_{a} \right)
\]
for each $i \ge 0$, where $\beta_{0} = 1$ and $\beta_{i + 1} = \beta_{i} \frac{q}{p - 1} + 1$.
Iterating this estimate $k$ times, we get
\begin{equation}\label{eqn:02@thm:lower_bound}
(\w \sigma_{a})^{\beta_{k}} 
\le
\prod_{i = 1}^{k} \beta_{i}^{ \left( \frac{q}{p - 1} \right)^{k - i} }
\w( \w( \cdots ( \w \sigma_{a} )^{q} \cdots \sigma_{a} )^{q} \sigma_{a} ).
\end{equation}
By definition, $\beta_{k} = \sum_{i = 0}^{k} \left( \frac{q}{p - 1} \right)^{i}$.
Therefore $\beta_{k} \uparrow \frac{p - 1}{p - 1 - q} $ as $k \to \infty$ and
\begin{equation}\label{eqn:03@thm:lower_bound}
\prod_{i = 1}^{k} \beta_{i}^{ \left( \frac{q}{p - 1} \right)^{k - i} }
\le
\left( \frac{p - 1}{p - 1 - q}  \right)^{ \sum_{i = 1}^{k} \left( \frac{q}{p - 1} \right)^{k - i}  }
\le
\left( \frac{p - 1}{p - 1 - q}  \right)^{ \frac{p - 1}{p - 1 - q} }.
\end{equation}
Combining \eqref{eqn:01@thm:lower_bound}, \eqref{eqn:02@thm:lower_bound} and \eqref{eqn:03@thm:lower_bound}
and letting $k \to \infty$, we obtain
\[
u
\ge
\left( \frac{p - 1 - q}{p - 1}  \right)^{ \frac{p - 1}{p - 1 - q} }
(\w \sigma_{a})^{ \frac{p - 1}{p - 1 - q} }.
\]
Without loss of generality, we may assume that $\sigma_{a} \neq 0$ for small $a > 0$.
Then $u \ge a^{\frac{q}{p - 1} }\w \sigma_{a} > 0$ in $\Omega$ by the strong minimum principle.
Thus, taking the limit $a \to 0$, we arrive at the desired estimate.
\end{proof}

Finally, we prove Theorem \ref{thm:main_theorem} and its variants.

\begin{proof}[Proof of Theorem \ref{thm:main_theorem}]

\ref{enum:01@main_theorem} $\Rightarrow$ \ref{enum:02@main_theorem}:
Using Theorem \ref{thm:lower_bound}, we find that
\[
\begin{split}
\left(
\int_{\Omega} \left( \w_{p, w} \sigma \right)^{ \frac{(\gamma + q)(p - 1)}{p - 1 - q} } \, d \sigma
\right)^{\frac{1}{\gamma + q}}
& \le
\frac{1}{c_{V}}
\| v \|_{L^{\gamma + q}(\sigma)}
\le
\frac{C_{1}}{c_{V}}.
\end{split}
\]

\ref{enum:02@main_theorem} $\Rightarrow$ \ref{enum:03@main_theorem}:
By Theorem \ref{thm:WNI}, \eqref{eqn:weighted_norm_ineq} holds with
\[
\begin{split}
C_{3}^{ \frac{p - 1}{p - 1 - q}}
\le
\left(
c_{E} \e_{ \frac{(\gamma + q)(p - 1)}{p - 1 - q} }(\sigma)^{\frac{p - 1 - q}{\gamma + q}}
\right)^{ \frac{1}{p - 1 - q} }
\le
c_{E}^{ \frac{1}{p - 1 - q} } C_{2}.
\end{split}
\]

\ref{enum:03@main_theorem} $\Rightarrow$ \ref{enum:01@main_theorem}:
Take $\{ \mathbf{1}_{F_{k}} \sigma \}_{k = 1}^{\infty} \subset \s_{00}$ by using Theorem \ref{thm:approximation}.
Applying \eqref{eqn:weighted_norm_ineq} to
$f = \left( \w \sigma_{k} \right)^{ \frac{q(p - 1)}{p - 1 - q} } \mathbf{1}_{F_{k}} \in L^{ \frac{\gamma + q}{q}}(\sigma)$
and using Lemma \ref{lem:iterated_ineq},
we obtain
\[
\begin{split}
\left(
\int_{\Omega} \left( \w_{p, w} \sigma_{k} \right)^{ \frac{(\gamma +q)(p - 1)}{p - 1 - q} } \mathbf{1}_{F_{k}} \, d \sigma
\right)^{\frac{1}{\gamma + q}}
\le
C.
\end{split}
\]
Therefore, by the monotone convergence theorem,
\[
u_{0} := c_{V} \left(\w_{p, w} \sigma \right)^{\frac{p - 1}{p - 1 - q}} \in L^{ \gamma + q }(\sigma),
\]
where $c_{V}$ is the constant in \eqref{eqn:opt-const}.
Define a sequence of $(p, w)$-superharmonic functions $\{ u_{i} \}_{i = 1}^{\infty}$ by 
\[
u_{i + 1} := \w_{p, w} (u_{i}^{q} \sigma), \quad i = 1, 2, \dots.
\]
By \eqref{eqn:weighted_norm_ineq},
\[
\begin{split}
\| u_{i + 1} \|_{L^{\gamma + q}(\sigma)}
=
\| \w_{p, w} ( u_{i}^{q} \sigma) \|_{L^{\gamma + q}(\sigma)}
\le
C_{3} \| u_{i} \|_{L^{ \gamma + q }(\sigma)}^{ \frac{q}{p - 1} },
\end{split}
\]
and hence $\{ u_{i} \}_{i = 1}^{\infty} \subset L^{\gamma + q}(\sigma) \subset L^{q}_{\loc}(\sigma)$.
By Lemma \ref{lem:iterated_ineq}, $u_{0} \le u_{1}$.
Hence,  by induction, $u_{i} \le u_{i + 1}$ for all $i \ge 1$.
Let $u = \lim_{i \to \infty} u_{i}$.
By the monotone convergence theorem,
\[
\| u \|_{L^{\gamma + q}(\sigma)} 
=
\lim_{i \to \infty} \| u_{i + 1} \|_{L^{\gamma + q}(\sigma)}
\le
C_{3}^{\frac{p - 1}{p - 1 - q}}.
\]
By Proposition \ref{prop:potentials}, $u$ is $(p, w)$-superharmonic in $\Omega$ and $u = \w_{p, w} ( u^{q} \sigma)$.

Assume that $v$ is a nontrivial nonnegative $(p,w)$-superharmonic solution to
$- \laplacian_{p, w} v = \sigma v^{q}$ in $\Omega$.
Then $u_{0} \le v$ by Theorem \ref{thm:lower_bound}, and hence, $u_{i} \le v$ for all $i \ge 1$ by induction.
Therefore $u \le v$.
\end{proof}

\begin{remark}
In Theorem \ref{thm:main_theorem},
the equivalence \ref{enum:01@main_theorem} $\Leftrightarrow$ \ref{enum:02@main_theorem}
still holds even if $q = 0$.
\end{remark}

\begin{theorem}\label{thm:main_theorem_infty}
Let $\Omega$ be a bounded domain in $\R^{n}$.
Let $1 < p < \infty$ and $0 < q < p - 1$.
Suppose that $\sigma \in \M^{+}_{0}(\Omega) \setminus \{ 0 \}$.
Then, the following statements are equivalent:
\begin{enumerate}[label=(\arabic*)]
\item\label{enum:01@main_theorem_infty}
There exists a bounded positive weak supersolution
$v \in H^{1, p}_{\loc}(\Omega; w)$ to $- \laplacian_{p, w} v = \sigma v^{q}$ in $\Omega$
satisfying $\| v \|_{ L^{\infty}(\sigma) } \le C_{1} < \infty$.
\item\label{enum:02@main_theorem_infty}
$\| \w_{p, w} \sigma \|_{L^{\infty}(\sigma)}^{ \frac{p - 1}{p - 1 - q} } \le C_{2} < \infty$.
\item\label{enum:03@main_theorem_infty}
The following weighted norm inequality holds:
\[
\| \w_{p, w} ( |f| \sigma ) \|_{L^{\infty}( \sigma )}
\le
C_{3}  \| f \|_{L^{ \infty }( \sigma )}^{\frac{1}{p - 1}},
\quad
\forall f \in L^{\infty}( \sigma ).
\]
\end{enumerate}
Moreover, if $C_{i}$ ($i = 1, 2 ,3$) are the best constants in the above statements, then
\[
C_{1} \le C_{3}^{ \frac{p - 1}{p - 1 - q} } \le C_{2} \le \frac{ C_{1} }{c_{V}}.
\]
In addition, if one of the above statements holds,
then there exists a minimal positive $(p, w)$-superharmonic solution $u$ to
$- \laplacian_{p, w} u = \sigma u^{q}$ in $\Omega$
such that $\sup_{\Omega} u \le C_{1}$.
\end{theorem}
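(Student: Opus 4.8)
The plan is to mirror the proof of Theorem~\ref{thm:main_theorem}, running the cycle \ref{enum:01@main_theorem_infty}~$\Rightarrow$~\ref{enum:02@main_theorem_infty}~$\Rightarrow$~\ref{enum:03@main_theorem_infty}~$\Rightarrow$~\ref{enum:01@main_theorem_infty} with the $L^{\gamma+q}(\sigma)$ estimates of Section~\ref{sec:generalized_energy} replaced by $L^{\infty}(\sigma)$ estimates. The latter are lighter: the roles of Theorems~\ref{thm:MEE} and~\ref{thm:WNI} are played here simply by the homogeneity $\w_{p,w}(a\mu)=a^{1/(p-1)}\w_{p,w}\mu$, the monotonicity of $\mu\mapsto\w_{p,w}\mu$, and Lemma~\ref{lem:wmp}, which identifies $\sup_{\Omega}\w_{p,w}\mu$ with $\|\w_{p,w}\mu\|_{L^{\infty}(\mu)}$. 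The nonlinear lower bound Theorem~\ref{thm:lower_bound} and the iteration Lemma~\ref{lem:iterated_ineq} enter exactly as before.

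For \ref{enum:01@main_theorem_infty}~$\Rightarrow$~\ref{enum:02@main_theorem_infty}: the lsc-regularization of a bounded positive weak supersolution $v$ is $(p,w)$-superharmonic and lies in $L^{q}_{\loc}(\sigma)$, so Theorem~\ref{thm:lower_bound} gives $v\ge c_{V}(\w_{p,w}\sigma)^{(p-1)/(p-1-q)}$ pointwise in $\Omega$; taking $L^{\infty}(\sigma)$ norms yields $\|\w_{p,w}\sigma\|_{L^{\infty}(\sigma)}^{(p-1)/(p-1-q)}\le C_{1}/c_{V}$, i.e. $C_{2}\le C_{1}/c_{V}$. For \ref{enum:02@main_theorem_infty}~$\Rightarrow$~\ref{enum:03@main_theorem_infty}: since $|f|\le\|f\|_{L^{\infty}(\sigma)}$ holds $\sigma$-a.e., monotonicity and homogeneity give $\w_{p,w}(|f|\sigma)\le\|f\|_{L^{\infty}(\sigma)}^{1/(p-1)}\w_{p,w}\sigma$ pointwise, hence $\|\w_{p,w}(|f|\sigma)\|_{L^{\infty}(\sigma)}\le\|f\|_{L^{\infty}(\sigma)}^{1/(p-1)}\|\w_{p,w}\sigma\|_{L^{\infty}(\sigma)}$, which gives $C_{3}^{(p-1)/(p-1-q)}\le C_{2}$.

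The substantive step is \ref{enum:03@main_theorem_infty}~$\Rightarrow$~\ref{enum:01@main_theorem_infty}. I would first apply the inequality in \ref{enum:03@main_theorem_infty} to $f=\mathbf{1}_{F_{k}}$, where $\{F_{k}\}$ is the exhaustion of Theorem~\ref{thm:approximation}; since $\mathbf{1}_{F_{k}}\sigma\in\s_{00}$, Lemma~\ref{lem:wmp} gives $\sup_{\Omega}\w_{p,w}(\mathbf{1}_{F_{k}}\sigma)\le C_{3}$, and letting $k\to\infty$ through Proposition~\ref{prop:potentials}\ref{cond:02_potentials} shows $\w_{p,w}\sigma$ is bounded with $\sup_{\Omega}\w_{p,w}\sigma\le C_{3}$. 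Set $u_{0}:=c_{V}(\w_{p,w}\sigma)^{(p-1)/(p-1-q)}$ and $u_{i+1}:=\w_{p,w}(u_{i}^{q}\sigma)$. Lemma~\ref{lem:iterated_ineq} with $\beta=(p-1)/(p-1-q)$ gives $u_{0}\le u_{1}$ --- the constant $c_{V}$ is chosen exactly so that the identity $c_{V}^{(p-1-q)/(p-1)}=\beta^{-1}$ closes this comparison --- and then $(u_{i})$ is nondecreasing by induction. Quantitatively, writing $M_{i}=\|u_{i}\|_{L^{\infty}(\sigma)}$, the inequality in \ref{enum:03@main_theorem_infty} gives $M_{i+1}\le C_{3}M_{i}^{q/(p-1)}$; since $q/(p-1)<1$, the number $L:=C_{3}^{(p-1)/(p-1-q)}$ is a fixed point and $[0,L]$ is invariant, while $M_{0}\le c_{V}L\le L$, so $M_{i}\le L$ for all $i$. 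Applying Lemma~\ref{lem:wmp} at each step upgrades this to $\sup_{\Omega}u_{i}\le L$, so $u:=\lim_{i}u_{i}$ is bounded, in particular $u\not\equiv\infty$, and Proposition~\ref{prop:potentials}\ref{cond:02_potentials} gives $u=\w_{p,w}(u^{q}\sigma)$; thus $u$ is a $(p,w)$-superharmonic solution of $-\laplacian_{p,w}u=\sigma u^{q}$, positive by the strong minimum principle since $\sigma\neq0$.

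Finally, for minimality and the bound $\sup_{\Omega}u\le C_{1}$: if $v$ is any nontrivial nonnegative $(p,w)$-superharmonic supersolution of $-\laplacian_{p,w}v=\sigma v^{q}$, then $v\in L^{q}_{\loc}(\sigma)$ because $\sigma v^{q}\le\mu[v]$, so Theorem~\ref{thm:lower_bound} gives $u_{0}\le v$, and the comparison principle (Theorem~\ref{thm:comparison_principle}, valid for $\w_{p,w}$) propagates $u_{i}\le v$ by induction, hence $u\le v$; taking the infimum over such $v$ gives $\|u\|_{L^{\infty}(\sigma)}\le C_{1}$, and then $\sup_{\Omega}u=\|u\|_{L^{\infty}(u^{q}\sigma)}\le C_{1}$ by Lemma~\ref{lem:wmp}. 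I expect the main obstacle to be the bookkeeping between $\sup_{\Omega}$ and $\|\cdot\|_{L^{\infty}(\sigma)}$ while iterating --- in particular guaranteeing that each $u_{i}$, and the limit $u$, is genuinely bounded (not merely $\sigma$-essentially bounded) so that $u$ is $(p,w)$-superharmonic and Proposition~\ref{prop:potentials} applies; Lemma~\ref{lem:wmp} is precisely the device that removes this gap at every stage.
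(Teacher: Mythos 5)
Your proposal is correct and follows essentially the same route as the paper, which simply says ``modify the proof of Theorem \ref{thm:main_theorem}'': the implications \ref{enum:01@main_theorem_infty}$\Rightarrow$\ref{enum:02@main_theorem_infty}$\Rightarrow$\ref{enum:03@main_theorem_infty} via Theorem \ref{thm:lower_bound} and homogeneity/monotonicity of $\w_{p,w}$, the iteration $\| u_{i+1}\|_{L^{\infty}(\sigma)} \le C_{3}\| u_{i}\|_{L^{\infty}(\sigma)}^{q/(p-1)}$ started from $u_{0}=c_{V}(\w_{p,w}\sigma)^{(p-1)/(p-1-q)}$ with Lemma \ref{lem:iterated_ineq}, and Lemma \ref{lem:wmp} to convert $L^{\infty}(\sigma)$ bounds into genuine pointwise bounds. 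Your extra bookkeeping (applying \ref{enum:03@main_theorem_infty} to $f=\mathbf{1}_{F_{k}}$ first, and the fixed-point argument for $M_{i}\le L$) is exactly the detail the paper leaves implicit and is carried out correctly.
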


\begin{proof}
We modify the Proof of Theorem \ref{thm:main_theorem}.
Clearly, \ref{enum:01@main_theorem_infty} $\Rightarrow$ \ref{enum:02@main_theorem_infty} $\Rightarrow$ \ref{enum:03@main_theorem_infty}.
Assume that \ref{enum:03@main_theorem_infty} holds. Then, for all $i \ge 1$,
\[
\begin{split}
\| u_{i + 1} \|_{L^{\infty}(\sigma)}
=
\| \w_{p, w} ( u_{i}^{q} \sigma) \|_{L^{\infty}(\sigma)}
\le
C_{3} \| u_{i} \|_{L^{\infty}(\sigma)}^{ \frac{q}{p - 1} }.
\end{split}
\]
Thus,
\[
\| u \|_{L^{\infty}(u^{q} \sigma)} 
\le
\| u \|_{L^{\infty}(\sigma)}
\le
C_{3}^{ \frac{p - 1}{p - 1 - q} }.
\]
By Lemma \ref{lem:wmp},
$\sup_{\Omega} u \le C_{3}^{ \frac{p - 1}{p - 1 - q} }$ and \ref{enum:01@main_theorem_infty} holds.
\end{proof}

\begin{proof}[Proof of Theorem \ref{thm:main_theorem_FE}]
The case of $q = 0$ is \cite[Corollary 21.18]{MR2305115}, so we consider $q > 0$.
Assume that a positive finite energy solution $u$ exists.
We may assume that $u$ is quasicontinuous without loss of generality.
Then, we find that
\[
\begin{split}
\| \nabla u \|_{L^{p}(w)}^{p}
& =
\int_{\Omega} |\nabla u|^{p} \, d w
=
\int_{\Omega} u^{1 + q} \, d \sigma
=
\| u \|_{L^{1 + q}(\sigma)}^{1 + q}.
\end{split}
\]
Therefore, \eqref{eqn:energy_cond@FE} follows from Theorem \ref{thm:main_theorem}.
Conversely, assume that \eqref{eqn:energy_cond@FE} holds.
Then, Theorem \ref{thm:main_theorem} and Proposition \ref{prop:energy_class} give
a minimal positive finite energy weak solution $u \in H_{0}^{1, p}(\Omega; w)$ to $- \laplacian_{p, w} u = \sigma u^{q}$ in $\Omega$.
The uniqueness of such a solution follows from a convexity argument as in \cite[Theorem 5.1]{MR3311903}.
\end{proof}

\begin{remark}
As the proof of \cite[Corollary 6.3]{HARA2020111847},
using Remark \ref{rem:quasi-additivity},
we can construct weak solutions to quasilinear equations
of the type
\[
\begin{cases}
\displaystyle
- \laplacian_{p, w} u = \sum_{j = 1}^{J} \sigma_{j} u^{q_{j}} 
& \text{in} \ \Omega,
\\
u = 0
& \text{on} \ \del \Omega,
\end{cases}
\]
where $0 \le q_{j} < p - 1$ and $\sigma_{j} \in \s^{ \frac{ (1 + q_{j})(p - 1) }{p - 1 - q_{j}} }$ for $j = 1, 2, \dots, J$.
\end{remark}

\section{Quasilinear PDE with $L^{s, t}$ coefficients}\label{sec:example1}

Let us assume that $\mu \in \M^{+}_{0}(\Omega)$ is finite.
Then as in \cite[Theorem 2.1]{MR2456885},
\begin{equation}\label{eqn:wolff_UB}
\w_{p, w} \mu(x)
\le
C \W_{1, p, w}^{ 2 \diam(\Omega) } \mu(x),
\quad 
\forall x \in \Omega.
\end{equation}
Using this, we can estimate the generalized $p$-energy of $\mu$.

We now consider unweighted equations.
As the usual notation, we write $H_{0}^{1, p}(\Omega; 1)$ as $W_{0}^{1, p}(\Omega)$.
For a Lebesgue measurable function $f$ on $\Omega$, we define
\[
\| f \|_{L^{r, \rho}(\Omega)}
=
\begin{cases}
\displaystyle
\left( \int_{0}^{\infty} \left( t^{\frac{1}{r}} f^{*}(t) \right)^{\rho} \frac{dt}{t} \right)^{\frac{1}{\rho}}
& \text{if} \ \rho < \infty, \\
\displaystyle
\sup_{t > 0} t^{\frac{1}{r}} f^{*}(t)
& \text{if} \ \rho = \infty,
\end{cases}
\]
where $0 < r, \rho \leq \infty$ and
$f^{*}(t) = \inf \{ \alpha > 0 \colon | \{ x \in \Omega \colon |f(x)| > \alpha \} | \leq t \}$.
The space of all $f$ with $\| f \|_{L^{r, \rho}(\Omega)} < \infty$ is called the \textit{Lorentz space}.
For the basics of Lorentz spaces, we refer to \cite[Chapter 1]{MR2445437}.

\begin{corollary}\label{cor:BO}
Let $1 < p < n$, $0 \le q < p - 1$ and $0 < \gamma \le \infty$.
Set
\[
s = \frac{n(p - 1 + \gamma)}{n (p - 1 - q) + p(\gamma + q)}
\quad \text{and} \quad
t = \frac{p - 1 + \gamma}{p - 1 - q}.
\]
Let $\sigma =  \theta \, dx$, where $\theta \neq 0$ is a nonnegative function in $L^{s, t}(\Omega)$.
Then there exists a minimal positive $(p, w)$-superharmonic solution $u$ to \eqref{eqn:p-laplace} such that
$\min\{ u, l \} \in W_{0}^{1, p}(\Omega)$ for all $l > 0$.
Moreover, $u \in L^{r, \rho}(\Omega)$ and
\begin{equation}\label{eqn:00_BO}
\| u \|_{L^{r, \rho}(\Omega)}
\le
C \| \theta \|_{L^{s, t}(\Omega)}^{\frac{1}{p - 1 - q}},
\end{equation}
where 
\[
r = \frac{n (p - 1 + \gamma)}{n - p},
\quad
\rho = p - 1 + \gamma
\]
and $C$ is a positive constant depending only on $n$, $p$, $q$ and $\gamma$.
Assume also that $\gamma \ge 1$.
Then $u$ belongs to $W_{0}^{1, p}(\Omega)$
and satisfies \eqref{eqn:p-laplace} in the sense of weak solutions.
\end{corollary}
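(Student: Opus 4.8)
The plan is to obtain the existence statement from Theorem~\ref{thm:main_theorem} (and from Theorem~\ref{thm:main_theorem_infty} in the endpoint case $\gamma=\infty$), and then to upgrade the bound $\|u\|_{L^{\gamma+q}(\sigma)}<\infty$ furnished by that theorem to the Lorentz estimate \eqref{eqn:00_BO}. First I would note that $1<p<n$ forces $1<s<n/p$, so that $L^{s,t}(\Omega)\subset L^{1}(\Omega)$ on the bounded set $\Omega$; hence $\sigma=\theta\,dx$ is a nonzero finite measure lying in $\M^{+}_{0}(\Omega)$ (an absolutely continuous measure does not charge sets of zero $(p,w)$-capacity). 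Because $w=1$ and $\sigma$ is finite, \eqref{eqn:wolff_UB} gives the pointwise bound $\w_{p,w}\sigma\le C\,\W_{1,p,w}^{\infty}(\theta\,dx)$ with $C=C(n,p)$, where I pass to the untruncated potential $\W_{1,p,w}^{\infty}$ (its tail converges since $\theta\in L^{1}$ and $p<n$) in order to work with scale-invariant constants. Everything is thereby reduced to estimating Wolff potentials of $L^{s_{*},t_{*}}$-densities in Lorentz spaces.

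The key analytic input is the subcritical ($p<n$) Lorentz mapping property of the Wolff potential: for $g\ge 0$ in $L^{s_{*},t_{*}}(\Omega)$ with $1<s_{*}<n/p$,
\[
\bigl\| \W_{1,p,w}^{\infty}(g\,dx) \bigr\|_{L^{a_{*},\,(p-1)t_{*}}(\Omega)}
\le
C(n,p,s_{*},t_{*})\, \| g \|_{L^{s_{*},t_{*}}(\Omega)}^{\frac{1}{p-1}},
\qquad
\frac{1}{a_{*}}=\frac{1}{p-1}\Bigl(\frac{1}{s_{*}}-\frac{p}{n}\Bigr),
\]
which I would invoke as a known mapping property of Wolff potentials (it is a standard consequence of the boundedness of fractional integrals in Lorentz spaces; see the references on Wolff potentials cited in Section~\ref{sec:introduction}). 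The exponents $s,t,r,\rho$ are defined precisely so that $\tfrac1s=\tfrac{p-1-q}{r}+\tfrac pn$ and $t=\tfrac{\rho}{p-1-q}$; applying the displayed estimate with $(s_{*},t_{*})=(s,t)$ then returns $a_{*}=\tfrac{r(p-1)}{p-1-q}$ and $(p-1)t_{*}=\tfrac{\rho(p-1)}{p-1-q}$, whence
\[
\bigl\| (\w_{p,w}\sigma)^{\frac{p-1}{p-1-q}} \bigr\|_{L^{r,\rho}(\Omega)}\le C\,\|\theta\|_{L^{s,t}(\Omega)}^{\frac{1}{p-1-q}},
\]
while H\"{o}lder's inequality in Lorentz spaces --- again with exponents that match exactly, as forced by the definition of $r$ --- shows $\int_{\Omega}(\w_{p,w}\sigma)^{\frac{(\gamma+q)(p-1)}{p-1-q}}\,d\sigma<\infty$, i.e., \eqref{eqn:energy_cond}. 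Theorem~\ref{thm:main_theorem} then gives the minimal positive $(p,w)$-superharmonic solution $u$ to \eqref{eqn:p-laplace} with $\|u\|_{L^{\gamma+q}(\sigma)}<\infty$, realized as the increasing limit of the iterates $u_{0}=c_{V}(\w_{p,w}\sigma)^{\frac{p-1}{p-1-q}}$, $u_{i+1}=\w_{p,w}(u_{i}^{q}\sigma)$.

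To prove \eqref{eqn:00_BO} I would propagate an $L^{r,\rho}(\Omega)$ bound along this iteration. If $u_{i}\in L^{r,\rho}(\Omega)$, then $u_{i}^{q}\sigma=u_{i}^{q}\theta\,dx$ is finite and, by H\"{o}lder in Lorentz spaces, $u_{i}^{q}\theta\in L^{s_{0},\,\rho/(p-1)}(\Omega)$ with $\tfrac1{s_{0}}=\tfrac{p-1}{r}+\tfrac pn\in(\tfrac pn,1)$ and $\|u_{i}^{q}\theta\|_{L^{s_{0},\rho/(p-1)}}\le C\|u_{i}\|_{L^{r,\rho}}^{q}\|\theta\|_{L^{s,t}}$; feeding this into the Wolff estimate (whose output for the pair $(s_{0},\rho/(p-1))$ is exactly $L^{r,\rho}$) and using \eqref{eqn:wolff_UB} yields
\[
\|u_{i+1}\|_{L^{r,\rho}(\Omega)}\le C\,\|u_{i}\|_{L^{r,\rho}(\Omega)}^{\frac{q}{p-1}}\,\|\theta\|_{L^{s,t}(\Omega)}^{\frac{1}{p-1}}.
\]
Since $0\le q/(p-1)<1$, the map $K\mapsto CK^{q/(p-1)}\|\theta\|_{L^{s,t}}^{1/(p-1)}$ is increasing and concave with a unique positive fixed point comparable to $\|\theta\|_{L^{s,t}}^{1/(p-1-q)}$; starting from $\|u_{0}\|_{L^{r,\rho}(\Omega)}\le C\|\theta\|_{L^{s,t}(\Omega)}^{1/(p-1-q)}$ (the second display above), all $\|u_{i}\|_{L^{r,\rho}(\Omega)}$ therefore remain $\le C\|\theta\|_{L^{s,t}(\Omega)}^{1/(p-1-q)}$, and as $u_{i}\uparrow u$ and $\|\cdot\|_{L^{r,\rho}(\Omega)}$ is monotone with the Fatou property, \eqref{eqn:00_BO} follows. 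When $q=0$ the sequence is constant and \eqref{eqn:00_BO} (with $1/(p-1-q)=1/(p-1)$) is immediate; the endpoint $\gamma=\infty$, where $s=n/p$ and $r=\rho=t=\infty$, goes the same way using Theorem~\ref{thm:main_theorem_infty} and the $L^{\infty}$-endpoint of the Wolff estimate.

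For the regularity assertions I would put $\mu:=u^{q}\sigma$, which is finite (because $\int_{\Omega}u^{q}\,d\sigma\le\|u\|_{L^{\gamma+q}(\sigma)}^{q}\,\sigma(\Omega)^{\gamma/(\gamma+q)}<\infty$) and lies in $\M^{+}_{0}(\Omega)$, hence $\mu\in\s^{0}$; moreover $u=\w_{p,w}\mu$ and $\int_{\Omega}(\w_{p,w}\mu)^{\gamma}\,d\mu=\int_{\Omega}u^{\gamma+q}\,d\sigma<\infty$, so $\mu\in\s^{\gamma}$ as well. Then Proposition~\ref{prop:energy_class}\,\ref{statement:02@energy_class} (with $w=1$ and energy exponent $0$) gives $\min\{u,l\}\in W_{0}^{1,p}(\Omega)$ for every $l>0$, and, when $\gamma\ge 1$, Proposition~\ref{prop:energy_class}\,\ref{statement:04@energy_class} (with $\gamma_{0}=0$, $\gamma_{1}=\gamma$) gives $u\in W_{0}^{1,p}(\Omega)$ together with the fact that $u$ solves \eqref{eqn:p-laplace} in the finite-energy weak sense. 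The main obstacle is the sharp Lorentz-space bound for the Wolff potential at the critical first index: the crude size estimate $\int_{B(x,r)}g\lesssim\|g\|_{L^{s_{*}}}\,r^{\,n-n/s_{*}}$ only produces the version with $\|g\|_{L^{s_{*}}}$ in place of $\|g\|_{L^{s_{*},t_{*}}}$, which is insufficient here because $t>s$; everything after that point is the exponent bookkeeping engineered into the definitions of $s,t,r,\rho$.
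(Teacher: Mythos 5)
Your proposal is correct for $0<\gamma<\infty$ and agrees with the paper's proof in its outer structure --- verifying \eqref{eqn:energy_cond} via \eqref{eqn:wolff_UB} together with the Havin--Maz'ya/O'Neil Lorentz mapping property of Wolff potentials, invoking Theorem \ref{thm:main_theorem}, and deducing the $W_0^{1,p}$ statements from the finiteness of $u^q\sigma$ and Proposition \ref{prop:energy_class} --- but it takes a genuinely different route to the Lorentz estimate \eqref{eqn:00_BO}. The paper does not return to the iteration at all: it applies Corollary \ref{cor:energy_esti} to $\mu=u^q\sigma$ to get $v=u^{(p-1+\gamma)/p}\in W_0^{1,p}(\Omega)$ with $\|\nabla v\|_{L^p}^p\le c_E\,\e_\gamma(u^q\sigma)=c_E\|u\|_{L^{\gamma+q}(\sigma)}^{\gamma+q}$, and then uses the sharp (Lorentz-refined) Sobolev inequality, noting $\|u\|_{L^{r,\rho}(\Omega)}^{p-1+\gamma}=\|v\|_{L^{p^*,p}(\Omega)}^{p}$; so \eqref{eqn:00_BO} drops out of the energy bound already supplied by Theorem \ref{thm:main_theorem} in two lines. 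Your scheme instead propagates an $L^{r,\rho}$ bound along $u_{i+1}=\w_{p,w}(u_i^q\sigma)$ via Lorentz--H\"older and the off-diagonal Wolff mapping estimate at each step, closing with a fixed-point argument for the norms and Fatou; this is sound (your exponent bookkeeping checks out, and the mapping property you invoke is exactly the $I_1\circ(\cdot)^{1/(p-1)}\circ I_1$ composition behind the paper's citation of the Havin--Maz'ya estimate), and it yields the slightly stronger intermediate fact $(\w_{p,w}\sigma)^{(p-1)/(p-1-q)}\in L^{r,\rho}(\Omega)$, but it is longer, needs the full-strength Lorentz mapping theorem repeatedly rather than once, and relies on the internal construction of $u$ as $\lim u_i$ rather than only on the statement of Theorem \ref{thm:main_theorem}. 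One caveat: your remark that the endpoint $\gamma=\infty$ ``goes the same way'' is too quick, since there $\theta\in L^{n/p,\infty}(\Omega)$ and the Wolff potential does not map $L^{n/p,\infty}$ into $L^\infty$ (the critical embedding requires a finite second Lorentz index); the paper's written proof likewise treats only $\gamma<\infty$, so this endpoint would need a separate argument in either approach.
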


\begin{proof}
As in the proof of  \cite[Corollary 3.6]{HARA2020111847},
using \eqref{eqn:wolff_UB} and the Havin-Maz'ya potential estimate (see \cite{MR727526}),
we find that
\[
\int_{\Omega} (\w_{p, 1} (\theta \, dx))^{ \frac{(\gamma + q)(p - 1)}{p - 1 - q} } \theta \, dx
\le
C \| \theta \|_{L^{s, t}(\Omega)}^{\frac{p - 1 + \gamma}{p - 1 - q}}.
\]
By Theorem \ref{thm:main_theorem}, there exists a minimal $p$-superharmonic solution $u$ such that
\begin{equation}\label{eqn:01_BO}
\e_{\gamma}( u^{q} \sigma )
=
\| u \|_{L^{\gamma + q}(\sigma)}^{\gamma + q}
\le
C \| \theta \|_{L^{s, t}(\Omega)}^{\frac{p - 1 + \gamma}{p - 1 - q}}.
\end{equation}
Furthermore, by Corollary \ref{cor:energy_esti}
and a sharp form of Sobolev inequality (see, e.g., \cite[p.234]{MR2777530}),
\begin{equation}\label{eqn:02_BO}
\| u \|_{L^{r, \rho}(\Omega)}^{p - 1 + \gamma}
=
\| v \|_{L^{p^{*}, p}(\Omega)}^{p}
\le
C \int_{\Omega} |\nabla v|^{p} \, dx
\le
C \e_{\gamma}( u^{q} \sigma ),
\end{equation}
where $v = u^{ \frac{p - 1 + \gamma}{p} }$.
Combining \eqref{eqn:01_BO} and \eqref{eqn:02_BO}, we obtain \eqref{eqn:00_BO}.
Since $\Omega$ is bounded, we have
\[
\begin{split}
\int_{\Omega} u^{q} \theta dx
& \le
\left(
\int_{\Omega}
u^{\gamma + q} \theta \,dx
\right)^{\frac{q}{\gamma + q}}
\left(
\int_{\Omega} \theta \, dx
\right)^{\frac{\gamma}{\gamma + q}}
\\
& \le
\left(
\int_{\Omega}
u^{\gamma + q} \theta \,dx
\right)^{\frac{q}{\gamma + q}}
\left(
C \| \theta \|_{L^{s, t}(\Omega)} |\Omega|^{\frac{s - 1}{s}}
\right)^{\frac{\gamma}{\gamma + q}} < \infty.
\end{split}
\]
Thus, $u^{q} \sigma = u^{q} \theta \, dx \in \s^{0}$.
By Proposition \ref{prop:energy_class}, this implies that $\min\{ u, l \} \in W_{0}^{1, p}(\Omega)$ for all $l > 0$.
If $\gamma \ge 1$, then $u^{q} \sigma \in \s^{0} \cap \s^{\gamma} \subset \s^{1}$, and hence $u \in W_{0}^{1, p}(\Omega)$.
\end{proof}

\begin{remark}
For $0 < \gamma < 1$,
using an interpolation argument (see, e.g., \cite[Lemma 4.2]{MR1354907}),
from \eqref{eqn:02_BO} and \eqref{eqn:grad_esti}, we can deduce a gradient estimate of $u$.
\end{remark}

\section{Quasilinear ODE with Hardy-type coefficients}\label{sec:example2}

Let us now consider the model ordinary differential equation
\begin{equation}\label{eqn:p-laplace_bdr}
\begin{cases}
- (w |u'|^{p - 2} u')' = \theta u^{q} \quad \text{in $(-1, 1)$,}
\\
u(-1) = u(1) = 0,
\end{cases}
\end{equation}
where
\begin{align*}
w(x) & = (1 - |x|)^{\beta}, \quad \beta \in (-1, p - 1),
\\
\theta(x) & = (1 - |x|)^{- \alpha}, \quad \alpha \in \R,
\end{align*}
and $' = \frac{d}{dx}$.
The function $w$ can be regarded as a Muckenhoupt $A_{p}$-weight in $\R$.
Therefore it is also $p$-admissible (see \cite[Chapter 15]{MR2305115} \cite[Theorem 2]{MR2180887}).
Note that $\theta$ is not integrable on $(-1, 1)$ if $\alpha \ge 1$.

\begin{corollary}
Let $1 < p < \infty$ and $0 \le q  < p - 1$.
Assume that $\alpha  < p - \beta$.
Then there exists a bounded minimal positive weak solution $u \in H^{1, p}_{\loc}((-1, 1); w) \cap C([-1, 1])$ to \eqref{eqn:p-laplace_bdr}.
Moreover, there exists a positive finite energy weak solution $u \in H_{0}^{1, p}((-1, 1); w)$ to \eqref{eqn:p-laplace_bdr}
if and only if
\begin{equation}\label{eqn:bdr_singularity}
\alpha < 1 + (1 + q) \left( 1 - \frac{1}{p} \right) \left( 1 - \frac{\beta}{p - 1} \right).
\end{equation}
\end{corollary}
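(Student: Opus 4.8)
The plan is to compute $\w_{p,w}\sigma$ for the one–dimensional operator explicitly and then feed the outcome into the criteria of Section~\ref{sec:proof_of_main_theorem}. Since $p>1=n$, the only set of zero $(p,w)$-capacity is $\emptyset$, so $\M^{+}_{0}((-1,1))=\M^{+}((-1,1))$ and $\sigma=\theta\,dx$ is a nonzero member of it ($\theta$ is locally bounded on $(-1,1)$). Write $\Theta(s):=\int_{0}^{s}(1-t)^{-\alpha}\,dt$ and, for $|x|<1$,
\[
U(x):=\int_{|x|}^{1}\left(\frac{\Theta(s)}{w(s)}\right)^{\frac{1}{p-1}}ds .
\]
For $k\ge 2$ set $\sigma_{k}:=\mathbf 1_{\{|x|<1-1/k\}}\sigma$, a finite measure with support compact in $(-1,1)$. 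Integrating the ODE and using that $\sigma_{k}$ is symmetric, so its $H_{0}^{1,p}$-solution is even and the flux $w|v'|^{p-2}v'$ vanishes at $0$, one checks that this solution is the explicit function $U_{k}$ obtained by replacing $\Theta$ with $\Theta_{k}(s):=\Theta(\min(s,1-1/k))$; since $\Theta_{k}$ is bounded and $\beta<p-1$, $U_{k}$ is bounded, whence $\sigma_{k}\in\s_{00}$ and $\w^{0}_{p,w}\sigma_{k}=U_{k}$. As $U_{k}\uparrow U$, Proposition~\ref{prop:potentials}\ref{cond:02_potentials} gives $\w_{p,w}\sigma=U$ whenever $U\not\equiv\infty$ (and $\w_{p,w}\sigma\equiv\infty$ otherwise).

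Next I would read off the boundary behaviour of $U$. From $\Theta(s)\asymp 1$ ($\alpha<1$), $\asymp-\ln(1-s)$ ($\alpha=1$), $\asymp(1-s)^{1-\alpha}$ ($\alpha>1$) near $s=1$, together with $w(s)=(1-s)^{\beta}$ and $\beta<p-1$, integration gives: $U\equiv\infty$ if $\alpha\ge p-\beta$; while if $\alpha<p-\beta$ then $U$ is bounded, continuous on $[-1,1]$, $U(\pm1)=0$, and near $x=1$
\[
U(x)\asymp
\begin{cases}
(1-x)^{\frac{p-1-\beta}{p-1}}, & \alpha<1,\\
(-\ln(1-x))^{\frac{1}{p-1}}(1-x)^{\frac{p-1-\beta}{p-1}}, & \alpha=1,\\
(1-x)^{\frac{p-\alpha-\beta}{p-1}}, & 1<\alpha<p-\beta,
\end{cases}
\]
and symmetrically at $x=-1$. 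In particular $\|\w_{p,w}\sigma\|_{L^{\infty}(\sigma)}=\sup_{(-1,1)}U<\infty$ exactly when $\alpha<p-\beta$.

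Assume $\alpha<p-\beta$. If $q>0$, Theorem~\ref{thm:main_theorem_infty} (with statement \ref{enum:02@main_theorem_infty} in force) yields a minimal positive $(p,w)$-superharmonic solution $u$ of \eqref{eqn:p-laplace_bdr} with $\sup_{(-1,1)}u\le C_{1}<\infty$; for $q=0$ take $u=\w_{p,w}\sigma$, minimal by Proposition~\ref{prop:potentials}\ref{cond:01_potentials}. A bounded $(p,w)$-superharmonic function lies in $H^{1,p}_{\loc}((-1,1);w)$, so $u$ is a local weak solution; on a compact subinterval $u^{q}\theta$ is bounded, so there $u$ agrees with $\w^{0}_{p,w}$ of a measure with bounded density and is continuous by interior regularity for the weighted $p$-Laplacian. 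Finally $u=\w_{p,w}(u^{q}\sigma)\le(\sup u)^{q/(p-1)}\,\w_{p,w}\sigma\to 0$ as $x\to\pm1$, so, setting $u(\pm1)=0$, $u\in C([-1,1])$. This proves the first assertion.

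For the second, Theorem~\ref{thm:main_theorem_FE} says a positive finite-energy weak solution in $H_{0}^{1,p}((-1,1);w)$ exists iff $\int_{-1}^{1}U^{\kappa}\theta\,dx<\infty$ with $\kappa=\frac{(1+q)(p-1)}{p-1-q}$. If $\alpha\ge p-\beta$ this integral is $+\infty$ and \eqref{eqn:bdr_singularity} fails, since its right–hand side equals $1+(1+q)\frac{p-1-\beta}{p}<p-\beta$ because $1+q<p$. If $\alpha<p-\beta$, insert the asymptotics above: near $x=1$ the integrand is, up to a logarithmic factor, comparable to $(1-x)^{\kappa\tau-\alpha}$ with $\tau=\frac{p-(\alpha\vee 1)-\beta}{p-1}$; for $\alpha\le1$ one has $\kappa\tau>0\ge\alpha-1$, so the integral converges and \eqref{eqn:bdr_singularity} holds (at $\alpha=1$ the logarithm is harmless as $\beta<p-1$), while for $1<\alpha<p-\beta$ convergence amounts to $\kappa\tau>\alpha-1$, i.e.\ $(1+q)(p-\alpha-\beta)>(p-1-q)(\alpha-1)$, which after collecting the $\alpha$-terms is precisely \eqref{eqn:bdr_singularity}. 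The step I expect to require genuine care is identifying $\w_{p,w}\sigma=U$ in the non-integrable regime $\alpha\ge1$: there $\sigma$ is an infinite measure and the Wolff-potential bound \eqref{eqn:wolff_UB} is unavailable, so one must go through the $\s_{00}$-truncation and Proposition~\ref{prop:potentials}\ref{cond:02_potentials}, together with the boundary asymptotics and the logarithmic borderline $\alpha=1$.
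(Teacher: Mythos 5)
Your argument is correct, but it follows a genuinely different route from the paper. For the first assertion the paper never computes $\w_{p,w}\sigma$: it exhibits the explicit bounded supersolution $V(x)=C(1-|x|)^{A}$ with $A\in(0,1-\tfrac{\beta}{p-1})$, $A\le\tfrac{p-\alpha-\beta}{p-1-q}$, invokes Theorem \ref{thm:main_theorem_infty} via its statement (1), and gets continuity up to the boundary from $0\le u\le V$. For the second assertion the paper does not touch the energy integral directly either: it quotes Maz'ya's Hardy-inequality criterion to show that the trace inequality \eqref{eqn:trace_ineq} holds exactly when \eqref{eqn:bdr_singularity} does, and then combines Theorems \ref{thm:main_theorem_FE} and \ref{thm:embedding}. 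You instead integrate the ODE to identify $\w_{p,w}\sigma$ with the explicit profile $U$, pass through the $\s_{00}$-truncation and Proposition \ref{prop:potentials} to justify this in the non-integrable regime $\alpha\ge 1$, and then verify condition (2) of Theorem \ref{thm:main_theorem_infty} and the energy condition \eqref{eqn:energy_cond@FE} by direct asymptotic computation (your exponent bookkeeping checks out: $\kappa\tau>\alpha-1$ with $\tau=\tfrac{p-\alpha-\beta}{p-1}$ rearranges exactly to \eqref{eqn:bdr_singularity}, and the right-hand side of \eqref{eqn:bdr_singularity} is indeed $<p-\beta$, so the cases glue correctly). Your approach is more computational and self-contained — it avoids the external Hardy-inequality reference and yields the precise boundary asymptotics of the potential, including the logarithmic borderline $\alpha=1$, as a byproduct — at the cost of having to justify the pointwise identification $\w_{p,w}\sigma=U$ through the approximation machinery; the paper's route is shorter and stays entirely at the level of supersolutions and functional inequalities. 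The routine steps you leave as ``one checks'' (evenness and the vanishing flux at $0$ via uniqueness of the $H_{0}^{1,p}$-solution, $U_{k}\in H_{0}^{1,p}((-1,1);w)$ because $\Theta_{k}$ is bounded and $w^{-1/(p-1)}$ is integrable) are all sound.
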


\begin{proof}
Let $v = (1 - |x|)^{A}$. Then
\[
- (w |v'|^{p - 2} v')'
=
c(p, \beta, A) (1 - |x|)^{(A - 1)(p - 1) + \beta - 1} + 2A^{p - 1} \delta_{0}
\]
in the sense of distribution, where
$c(p, \beta, A) = - A^{p - 1}\{ (A - 1)(p - 1) + \beta \}$
and 
$\delta_{0}$ is the Dirac mass concentrated at $0$.
Hence taking $A \in (0, 1 - \frac{\beta}{p - 1})$ such that $A \le \frac{p - \alpha - \beta}{p - 1- q}$ and
choosing a large $C$, we can make $V(x) = C (1 - |x|)^{A}$ a bounded supersolution
to \eqref{eqn:p-laplace_bdr}.
Then Theorem \ref{thm:main_theorem_infty} gives a positive bounded weak solution $u \in H^{1, p}_{\loc}((-1, 1); w)$ to \eqref{eqn:p-laplace_bdr}.
The Sobolev embedding theorem provides $u \in C(-1, 1)$.
Furthermore, $u$ is continuous up to the boundary because $0 \le u(x) \le V(x)$ for all $x \in (-1, 1)$.

According to the Hardy-type inequality in \cite[Theorem 1.3.3]{MR2777530},
\eqref{eqn:bdr_singularity} is necessary and sufficient for the embedding
\[
\left( \int_{-1}^{1} |u|^{1 + q} \theta \, dx \right)^{\frac{1}{1 + q}}
\le
C \left( \int_{-1}^{1} |u'|^{p} w \, dx \right)^{\frac{1}{p}},
\quad
\forall u \in C_{c}^{\infty}(-1, 1).
\]
Thus Theorems \ref{thm:main_theorem_FE}  and \ref{thm:embedding} give the desired assertion.
\end{proof}


\bibliographystyle{abbrv}
\bibliography{reference}


\end{document}